\tikzset{commutative diagrams/.cd}
\numberwithin{equation}{section}
\newtheorem{theorem}{Theorem}[section]
\newtheorem{corollary}[theorem]{Corollary}
\newtheorem{lemma}[theorem]{Lemma}
\newtheorem{proposition}[theorem]{Proposition}
\theoremstyle{definition}
\newtheorem{definition}[theorem]{Definition}
\newtheorem{definition-theorem}[theorem]{Definition-Theorem}
\newtheorem{definition-lemma}[theorem]{Definition-Lemma}
\newtheorem{example}[theorem]{Example}
\newtheorem{notation}[theorem]{Notation}
\newtheorem{question}[theorem]{Question}
\newtheorem{remark}[theorem]{Remark}
\theoremstyle{remark}
\newtheorem*{remark*}{Remark}
\newcommand\Z{{\mathbb Z}}
\newcommand\N{{\mathbb N}}
\newcommand\Q{{\mathbb Q}}
\newcommand\C{{\mathbb C}}
\newcommand\A{{\mathbb A}}
\newcommand\p{{\mathfrak{p}}}
\newcommand\F{{\mathbb F}}
\newcommand\Fp{{\mathbb F}_p}
\newcommand\Fq{{\mathbb F}_q}
\newcommand\Zp{{{\mathbb Z}_p}}
\def\O{\mathcal O}
\DeclareMathOperator{\im}{im}
\DeclareMathOperator{\coker}{coker}
\DeclareMathOperator{\Hom}{Hom}
\DeclareMathOperator{\End}{End}
\DeclareMathOperator{\Aut}{Aut}
\DeclareMathOperator{\tr}{tr}
\newcommand{\red}{{\mathrm{red}}}
\DeclareMathOperator{\Frac}{Frac}
\DeclareMathOperator{\Spec}{Spec}
\DeclareMathOperator{\GL}{GL}
\DeclareMathOperator{\Mat}{Mat}
\DeclareMathOperator{\Sym}{Sym}
\newcommand\resp{{\textit{resp.~}}}
\newcommand\subeq{\subseteq}
\newcommand\supeq{\supseteq}
\newcommand\bbar{\overline} 
\newcommand\tl{\widetilde} 
\newcommand\ttilde{\widetilde} 
\newcommand\hhat{\widehat} 
\newcommand\incl{\hookrightarrow}
\DeclarePairedDelimiter{\abs}{\lvert}{\rvert}
\DeclarePairedDelimiter{\norm}{\lVert}{\rVert}
\DeclarePairedDelimiter{\set}{\{}{\}}
\DeclarePairedDelimiter{\parens}{\lparen}{\rparen}
\DeclarePairedDelimiter{\bracks}{\lbrack}{\rbrack}
\DeclarePairedDelimiter\floor{\lfloor}{\rfloor}
\DeclareMathOperator{\Hilb}{Hilb}
\newcommand{\Zhat}{\hhat{Z}}
\newcommand{\zetahat}{\hhat{\zeta}}
\newcommand{\nuhat}{\hhat{\nu}}
\DeclareMathOperator{\Coh}{Coh}
\DeclareMathOperator{\Quot}{Quot}
\DeclareMathOperator{\rk}{rk}
\newcommand{\m}{\mathfrak{m}}
\DeclareMathOperator{\Span}{span}
\renewcommand{\L}{\mathbb{L}}
\newcommand{\cotimes}{\widehat{\otimes}}
\newcommand{\Ohat}{\hhat{\O}}
\DeclareMathOperator{\Surj}{Surj}
\DeclareMathOperator{\Gr}{Gr}
\DeclareMathOperator{\Cont}{Cont}
\newcommand{\KVar}[1]{K_0(\mathrm{Var}_{#1})}
\newcommand{\Nhat}{\widehat{N}}
\newcommand{\KStck}[1]{{K_0(\mathrm{Stck}_{#1})}}
\newcommand\fc{{\mathfrak c}}
\newcommand{\ut}{\undertilde}
\newcommand{\calE}{{\mathcal{E}}}
\newcommand{\calF}{{\mathcal{F}}}
\newcommand{\one}{\mathbf{1}}
\newcommand{\qbinom}[2]{\genfrac{[}{]}{0pt}{}{#1}{#2}}
\newcommand{\Pd}{\mathrm{Pd}}
\newcommand{\mat}[1]{\begin{matrix}#1\end{matrix}}
\newcommand{\bmat}[1]{\begin{bmatrix}#1\end{bmatrix}}
\newcommand{\AG}{\mathbf{AG}}
\newcommand{\Br}{\mathbf{Br}}
\begin{document}
\title[Coh and Quot zeta functions]{Motivic Coh and Quot zeta functions of singular curves}
\author{Yifeng Huang}
\address{Department of Mathematics, University of Southern California}
\email{yifeng.huang@usc.edu}
\thanks{}

\author{Ruofan Jiang}
\address{Department of Mathematics, University of California--Berkeley}
\email{ruofanjiang@berkeley.edu}
\thanks{}

\subjclass[2020]{14D20; 14M15, 11S45, 33D15}
\keywords{Moduli of zero-dimensional sheaves, Quot schemes, Motivic zeta function, $q$-series, singular curves}

\date{}

\date{\today}

\begin{abstract}
    We present a general and effective algebraic framework for enumerating finite-length quotients of a torsion-free sheaf of arbitrary rank (the Quot zeta function) and finite-length coherent sheaves (the Coh zeta function) over reduced singular curves. We prove that Quot zeta functions are motivically rational, using a novel parametrization and the geometry of affine Grassmannians, and that they satisfy an arbitrary-rank reflection principle, via harmonic analysis. We show that the a normalized high-rank limit of Quot zeta functions converges to the Coh zeta function. As a first application, we compute explicit formulas for these zeta functions for all $y^2 = x^n$ singularities, revealing a surprising and previously unknown connection to Rogers--Ramanujan type $q$-series. Further applications to affine Springer fibers and commuting varieties are also discussed.
\end{abstract}

\maketitle

\setcounter{tocdepth}{1}
\tableofcontents

\section{Introduction}

Let $k$ be an algebraically closed field, and let $\KVar{k}$ (\resp $\KStck{k}$) be the Grothendieck ring of $k$-varieties (\resp $k$-stacks), with $\L:=[\A^1_k]$ denoting the Lefschetz motive. In this paper, we study two types of motivic generating functions that enumerate geometric objects on a $k$-variety\footnote{A $k$-variety is a quasi-projective $k$-scheme, not necessarily reduced or irreducible.} $X$. The first object of study is $\Coh_n(X)$, the stack of coherent sheaves on $X$ that have $0$-dimensional support and length $n$. Its generating series is the \textbf{(motivic) Coh zeta function}
\begin{equation}
    \Zhat_X(t) := \sum_{n\geq 0} [\Coh_n(X)]\, t^n \in \KStck{k}[[t]].
\end{equation}
The second is $\Quot_n(\calE)$, the Quot scheme parametrizing $0$-dimensional length-$n$ quotients of a given coherent sheaf $\calE$. Its generating series is the \textbf{(motivic) Quot zeta function}:
\begin{equation}
    Z^X_\calE(t) = Z_\calE(t) := \sum_{n\geq 0} [\Quot_{n}(\calE)]\, t^n \in \KVar{k}[[t]].
\end{equation}
We use the terms rank $d$ Quot scheme (of points) and rank $d$ Quot zeta function to refer to the objects corresponding to $\calE=\O_X^d:=\O_X^{\oplus d}$. The rank $1$ Quot scheme of points is the Hilbert scheme of points: $\Quot_n(\O_X)=\Hilb_n(X)$.

These objects are of classical interest. The moduli spaces are directly connected to the study of quiver varieties, degree 0 Donaldson--Thomas theory, varieties of commuting matrices and moduli spaces of modules \cite{bbs2013motivic,cazzaniga2023higher,huang2023mutually,hos2023matrix,bbvx2023sato,jelisiejewsivic,moschettiricolfi2018}, and their motivic generating functions serve as geometric counterparts to several arithmetic zeta functions, such as Hey's order zeta function \cite{hey1927}, Solomon's lattice zeta function \cite{solomon1977zeta}, and Cohen and Lenstra's zeta function \cite{cohenlenstra1984heuristics}.

For smooth varieties of dimension one or two, these theories are well-understood. The table below summarizes the key results.

\begin{center}
\begin{tabular}{l|l|l}
\textbf{$X$} & \textbf{Object} & \textbf{Key references and remarks} \\
\hline
Smooth curve & $Z_{\O_X}(t)$ & \cite{kapranov2000}, $\Hilb_n(X)=\Sym^n(X)$ is smooth \\
& $Z_{\O_X^{\oplus d}}(t)$ & \cite{bfp2020motive,bifet1989}, rank $d$ Quot schemes are smooth \\
& $\Zhat_X(t)$ & classical, $[\Mat_n]=\L^{n^2}$ \\
\hline
Smooth surface & $Z_{\O_X}(t)$ & \cite{ellingsrudstromme1987homology,goettsche2001motive}, $\Hilb_n(X)$ is smooth\\
& $Z_{\O_X^{\oplus d}}(t)$ & \cite{mozgovoy2019}, rank $d>1$ Quot schemes are not smooth \\
& $\Zhat_X(t)$ & \cite{feitfine1960pairs,bryanmorrison2015motivic}, commuting variety\\
\end{tabular}
\end{center}

For smooth varieties of dimension 3 or above, computing these generating functions is an extremely difficult problem; however, see \cite{gmmr2024motive} in the opposite regime where the ``number of points'' $n$ is fixed and the dimension grows. For smooth 3-folds, the theory is well-developed in the sense of virtual motives \cite{bbs2013motivic,cazzaniga2023higher}. 

Our focus, however, is on the case of reduced singular curves, where the situation is more complex and reveals new phenomena. Even in the rank 1 case, the theory for singular curves is an extensive and active area of research. The Hilbert schemes of points on a planar singular curve over $\C$ is deeply connected to the topology of its associated link, leading to proven and conjectured relations with knot theory, geometric representation theory, and algebraic combinatorics, marked by the celebrated Oblomkov--Rasmussen--Shende conjecture (see \cite{gks2021link, ors2018homfly} and references therein).

In contrast, the higher-rank and unframed counterparts for singular curves remain largely uncharted lands. The study of the \textbf{unframed} version, $\Zhat_X(t)$, was pioneered by the first author in \cite{huang2023mutually}. Through the example of the simple node $xy=0$, that work revealed the surprising appearance of Rogers--Ramanujan type $q$-series, a phenomenon not seen in the rank 1 story. A subsequent work of the authors, \cite{huangjiang2023punctual}, established a crucial connection, demonstrating that the high-rank theory provides a systematic way to compute its unframed counterpart. Its initial computations for the simple cusp $y^2=x^3$
(in ranks $d\leq 3$) led to a conjectured formula in all ranks (and thus the unframed theory), confirming the persistence of the $q$-series phenomenon. This conjecture is proven in the present paper as a special case of our main computational results. However, the methods used in the prior work were difficult to extend, leaving the nature of the $q$-series connection vague.

The main contribution of this paper is a new and robust algebraic recipe to compute the motivic Quot zeta function for torsion-free bundles on reduced singular curves, overcoming the technical obstacles of previous methods. This framework is general, and as first applications, we establish several key results. First, we prove the rationality of the Quot zeta function over any reduced curve (\Cref{thm:rationality-intro}). Second, our method is explicit enough to yield the first complete formulas for the full family of $y^2=x^n$ singularities, for all ranks $d$ (\Cref{thm:torus-intro}). This proves the conjecture formulated in \cite{huangjiang2023punctual} (as the $n=3$ case) and reveals a rich combinatorial structure. Independent of these explicit computations, we prove a general reflection principle for the zeta functions of Gorenstein curve singularities on the point-count level (\Cref{thm:reflection-intro}). Finally, we present a simple and transparent limit formula (\Cref{thm:A}) that clarifies the fundamental connection between the high-rank and unframed theories found in \cite{huangjiang2023punctual}.

\subsection{Statements of results}
We now state these results more precisely. Before presenting our main computational theorems, we first revisit the connection between the Quot and Coh zeta functions. While this relationship was established in a more complex form in \cite{huangjiang2023punctual}, we present a new, more transparent formulation as a simple limit. This fundamental result establishes the high-rank theory as a direct computational path to the unframed theory.

\begin{theorem}[$\subeq$ \Cref{thm:A-effective}]\label{thm:A}
    Let $k$ be an algebraically closed field, and $X$ be any variety over $k$. The motivic Coh zeta function is the rank $d\to\infty$ limit of the rank $d$ Quot zeta function with a suitable rescaling:
    \begin{equation}\label{eq:thmA}
        \Zhat_X(t)=\lim_{d\to \infty} Z^X_{\O_X^d}(\L^{-d}t) \in \KStck{k}[[t]],
    \end{equation}
    where the convergence is coefficient-wise and in the sense of the dimension filtration on $\KStck{k}$ \cite{behrenddhillon2007}.
\end{theorem}

Since both $\Coh_n(X)$ and $\Quot_n^X(\calE)$ parametrize sheaves with zero-dimensional support, the motivic theories $\Zhat_X(t)$ and $Z_\calE^X(t)$ are of local nature; see \cite{fantechiricolfi2024structural,fantechiricolfi2024motivic} and a precise statement (\Cref{thm:local-to-global}). This justifies formulating our main results in terms of the punctual setting. Let $R$ be any finitely generated algebra over $k$, or a completion thereof; so $X=\Spec R$ is an affine $k$-variety or a \textbf{formal $k$-variety} (i.e., a germ of a $k$-variety). Let $\Coh_n(R)$ be the stack of $R$-modules that are $n$-dimensional (as a $k$-vector space), and
\begin{equation}
    \Zhat_R(t):=\sum_{n\geq 0} [\Coh_n(R)]\, t^n \in \KStck{k}[[t]].
\end{equation}
For a finitely generated $R$-module $E$, let $\Quot_n(E)$ be the Quot scheme parametrizing $n$-codimensional $R$-submodules of $E$, and
\begin{equation}
    Z^R_E(t)=Z_E(t):=\sum_{n\geq 0} [\Quot_n(E)]\, t^n \in \KVar{k}[[t]].
\end{equation}
If $R$ is complete, we also call $\Zhat_R(t)$ and $Z^R_E(t)$ the \textbf{punctual} Coh and Quot zeta functions.

Our new recipe for computing these series leads to the following general rationality result. The denominator is expressed using the $q$-Pochhammer symbol, defined by
\begin{equation}
    (a;q)_n:=(1-a)(1-aq)\cdots (1-aq^{n-1}) \text{ for }n\in \Z_{\geq 0}\cup \set{\infty}.
\end{equation}

\begin{theorem}[$\subeq$ \Cref{thm:rationality-motivic}]\label{thm:rationality-intro}
    Let $R$ be the germ of a reduced curve over an algebraically closed field $k$, with branching number $b(R)$. For any $d \ge 0$, the motivic Quot zeta function $Z_{R^d}^R(t)$ is rational. More precisely, the \textbf{normalized} rank $d$ Quot zeta function
    \begin{equation}
        N^R_{R^d}(t):=(t;\L)_d^{b(R)} Z_{R^d}^R(t)
    \end{equation}
    is a polynomial in $t$ with coefficients in $\KVar{k}$.
\end{theorem}

The role that the branching number plays here is best understood via the geometric, global version of this statement.
\begin{corollary}[$\subeq$ \Cref{thm:rationality-intro,thm:local-to-global}]
    Let $k$ be an algebraically closed field of characteristic zero and $X$ be a reduced curve over $k$. Let $\tl X\to X$ be its resolution of singularities. Then the quotient power series
    \begin{equation}
        N^X_{\O_X^d}(t):=\frac{Z^X_{\O_X^d}(t)}{Z^{\tl X}_{\O_{\tl X}^d}(t)}
    \end{equation}
    is in fact a polynomial in $\KVar{k}[t]$.
\end{corollary}

Our method is constructive, allowing for the explicit computation of these polynomials for specific families of singularities. We focus on the planar singularities corresponding to the $(2,n)$ torus knots and links. For $m \ge 1$, we define the germ of the \textbf{cusp} singularity ($(2,2m+1)$-torus knot) by
\begin{equation}
    R_{2,2m+1} := k[[X,Y]]/(Y^2-X^{2m+1}),
\end{equation}
and the \textbf{nodal} singularity ($(2,2m)$-torus link) by
\begin{equation}
    R_{2,2m} := k[[X,Y]]/(Y(Y-X^m)).
\end{equation}
When the characteristic of $k$ is not $2$, $R_{2,2m}$ is isomorphic to $k[[X,Y]]/(Y^2-X^{2m})$.

\begin{theorem}[$=$ \Cref{thm:cusp_simplification,thm:node-full}, and Remark~\ref{rmk:motivic-torus}]\label{thm:torus-intro}
    For $m\ge 1, d\geq 0$, and $R=R_{2,2m+1}$ or $R_{2,2m}$, the normalized Quot zeta function $N_{R^d}^R(t)$ is given by an explicit polynomial in $\Z[\L,t]$. 
\end{theorem}

Finally, independent of the explicit computations above, we prove a general symmetry that these polynomials must satisfy on the level of point counts. A curve is \textbf{Gorenstein} if its dualizing sheaf is a line bundle. The Gorenstein-ness of a curve depends only on its singularities, but not on its global geometry. Any planar curve singularity is Gorenstein. The \textbf{Serre invariant} of a curve singularity $R$ is $\dim_k \tl R/R$, where $R\incl \tl R$ is the normalization.

\begin{theorem}[$\subeq$ \Cref{thm:reflection-local}]\label{thm:reflection-intro}
    Let $R$ be a Gorenstein curve singularity over $\Fq$ with Serre invariant $\delta$. Then the normalized rank $d$ Quot zeta function satisfies the reflection principle
    \begin{equation}
        N^R_{R^d}(t) \overset{q}{=} \L^{d^2 \delta} t^{2d\delta} N^R_{R^d}(\L^{-d} t^{-1}),
    \end{equation}
    where $\overset{q}{=}$ means equality up to counting $\Fq$-points.
\end{theorem}

\subsection{Relation to $q$-series}
The explicit polynomials described in \Cref{thm:torus-intro} are not merely computational artifacts; on the contrary, they reveal brand new connections to classical $q$-series. The precise nature of this connection was not immediately clear when the present paper was first posted; the full picture, which we present here with the benefit of hindsight, required a pivotal simplification by a subsequent work of Chern \cite{shane2024multiple} and a recent observation by the first author \cite{huang2025coh}.

The central picture is that the normalized Quot zeta functions for the $y^2=x^n$ singularities are themselves $t$-deformations of celebrated multi-sums of the Rogers--Ramanujan type. To state this precisely, we define the following finitizations of the Andrews--Gordon and Bressoud multi-sums:
\begin{align}
    \AG_d(q,t;2m+3)&:=(q;q)_d \sum_{n_1,\dots,n_m} \frac{q^{\sum n_i^2}t^{2\sum n_i}}{(q;q)_{d-n_1}(q;q)_{n_1-n_2}\cdots (q;q)_{n_m}},\\
    \Br_d(q,t;2m+2)&:=(q;q)_d \sum_{n_1,\dots,n_m} \frac{q^{\sum n_i^2} t^{2\sum n_i}}{(q;q)_{d-n_1}(q;q)_{n_1-n_2}\cdots (q;q)_{n_m}(-tq;q)_{n_m}}.
\end{align}
The sums naturally truncate by the convention $1/(q;q)_n:=0$ for $n < 0$. To frame the motivic results in this language, we use the \textbf{rank $d$ finitized Coh zeta function},
\begin{equation}
    \Zhat_{R,d}(t):=Z_{R^d}^R(\L^{-d} t) \in \KStck{k}[[t]],
\end{equation}
a rescaling of the rank $d$ Quot zeta function whose $d\to \infty$ limit recovers $\Zhat_R(t)$ by \Cref{thm:A}. The following theorem summarizes the connection.

\begin{theorem}[{$\subeq$ Theorems~\ref{thm:cusp_simplification}, \ref{thm:node-full}, and \cite[Theorem~1.7]{shane2024multiple}}]
The rank $d$ finitized Coh zeta functions of $R_{2,2m+1}$ and $R_{2,2m}$ are given by:
\begin{align}
    \Zhat_{R_{2,2m+1},d}(t) &= \frac{1}{(t\L^{-1};\L^{-1})_d} \AG_d(\L^{-1},t;2m+3), \\
    \Zhat_{R_{2,2m},d}(t) &= \frac{1}{(t\L^{-1};\L^{-1})_d} \Br_d(\L^{-1},-t;2m+2).
\end{align}
\end{theorem}

The contribution of the present paper to this picture is twofold. The formula for the cusp case is proven completely herein (\Cref{thm:cusp_simplification}). For the nodal case, this paper provides the initial, un-simplified formula (\Cref{thm:node-full}), whose equivalence to the Bressoud sum above requires the additional $q$-series identity from \cite{shane2024multiple}. As crucial initial evidence, however, we provide a self-contained proof of the $t=1$ specialization (\Cref{thm:node_special_value}). This result amounts to a new Rogers--Ramanujan type identity whose proof relies purely on our geometric framework, without using any external $q$-series techniques.

Finally, we remark on the combinatorial nature of these series. Their $d\to \infty$ limits, which correspond to the Coh zeta function, have Rogers--Ramanujan type infinite product expressions at $t=\pm 1$:
\begin{align}
    \AG_\infty(q,\pm 1;2m+3) &= \frac{(q^{m+1},q^{m+2},q^{2m+3};q^{2m+3})_\infty}{(q;q)_\infty},\\
    \Br_\infty(q,1;2m+2) &= \frac{(q^{m+1},q^{m+1},q^{2m+2};q^{2m+2})_\infty}{(q;q)_\infty},\\
    \Br_\infty(q,-1;2m+2) &= \frac{1}{(q;q)_\infty},
\end{align}
where $(a_1,\dots,a_r;q)_n:=(a_1;q)_n\cdots (a_r;q)_n$. The \textbf{modulus} $2m+3$ or $2m+2$ appearing in the infinite product motivates our notation of these series. This raises a natural, albeit speculative, question:
\begin{question}
    For the germ of a general planar singularity $R$ over $k=\C$, if $\Zhat_R(1)$ converges and is a power series in $\L^{-1}$, is it a Rogers--Ramanujan type infinite product? If so, what determines its modulus?
\end{question}
Ongoing work of the authors and Alexei Oblomkov suggests that the modulus for the $(a,b)$-torus knot singularity should be $a+b$, extending the pattern found here. This correspondence appears to be new in the context of connections between knot theory and $q$-series; for example, in \cite[Eq.~(3-37)]{hikami2003volume}, the $(2,2m+1)$-torus knot corresponds to modulus $2m+1$ instead of $2m+3$.

While the $t$-deformations themselves remain mysterious, the reflection principle (\Cref{thm:reflection-intro}) imposes a strong constraint on their structure, implying functional equations of $q,t$-series
\begin{align}
    \AG_d(q,t;2m+3)&=t^{2dm} q^{d^2m}\AG_d(q,t^{-1}q^{-d};2m+3),\\
    \Br_d(q,t;2m+2)&=t^{2dm-d-1} q^{d^2m-\binom{d}{2}} \frac{1+t}{1+tq^d} \Br_d(q,t^{-1}q^{-d};2m+2).
\end{align}
The one for the Bressoud sum is nontrivial and was proven directly in \cite[Theorem 1.8]{shane2024multiple}. The above discussion leads to a far-reaching goal for this research program:
\begin{question}
    What is the nature of the $q,t$-series that arise as Coh and Quot zeta functions of planar singular curves? Do they belong to a natural space of functions, and how do they relate to the theory of modular forms and theta functions?
\end{question}

\subsection{Relation to other works}
Our results connect to several other active areas of research beyond what has been discussed.

A central theme in the study of singular curves is to compare the moduli spaces on the curve to those on its normalization. A recent, influential conjecture in this direction is the Hilb-vs-Quot conjecture \cite{kivinentrinh2023}, which, for the rank 1 case, proposes a precise relationship between the Hilbert scheme of the singularity $\Hilb_n(R)$ and the Quot scheme of its normalization $\Quot_n(\tl R)$; more precisely, $N^R_R(t)=N^R_{\tl R}(t)|_{\L \mapsto \L t}$ for those $R$ such that $\Hilb_n(R)$ and $\Quot_n^R(\tl R)$ admit affine pavings for all $n$. Our result for the cusp singularity (\Cref{thm:cusp_simplification}) reveals a new, strikingly different phenomenon that holds for all ranks $d$: the zeta functions for $R^d$ and $\tl R^d$ are related by the simple substitution $N^R_{R^d}(t)=N^R_{\tl R^d}(t^2)$. This rule and the rank-1 rule happen to coincide for the $(2,2m+1)$ torus knot in rank 1, but they do not appear to have a common generalization. The situation for the nodal singularity is even more mysterious, as there is no obvious rule relating the zeta functions for $R^d$ and $\tl R^d$, perhaps suggesting the need for an extra variable. Our work establishes initial general connections between the zeta functions of $R^d$ and $\tl R^d$, see \Cref{cor:central-motivic} and \Cref{prop:s-zero-specialization}. In view of the method of the present work and \cite{huang2025coh}, understanding the precise relation between the Quot zeta functions for $R^d$ and $\tl R^d$ is not only of intrinsic interest, but is also instrumental for the computation, as $Z_{\tl R^d}^R(t)$ is often more computationally approachable. 

The study of Hilbert and Quot schemes over planar singular curves is also intimately related to orbital integrals and affine Springer fibers \cite{yun2013orbital,kivinentsai2022}. A recent breakthrough of Kivinen and Tsai \cite{kivinentsai2022} on affine Springer fibers for tamely ramified regular semisimple elements implies that the Hilbert zeta function of a planar curve singularity has polynomial point-counts over finite fields. However, the Quot zeta function of rank $d>1$ corresponds to affine Springer fibers of elements of the form $\gamma^{\oplus d}$ (a block diagonal matrix with $d$ identical diagonal blocks), which are highly non-regular. This demonstrates that our framework is in the opposite regime of, and thus not covered by, the work of \cite{kivinentsai2022}. It remains an open problem whether the Quot zeta function of any planar singularity has polynomial point-counts. 

For arbitrary singular curve germs $R$ (not necessarily planar or Gorenstein), not many quantitive results were known. While the motivic rationality of $Z_{\tl R}^R(t)$ can be viewed as an incarnation of the existence of the (local) compactified Jacobian as a scheme, the rationality of $Z_R^R(t)$ in this generality was not known until recently proven in \cite{brv2020motivic}; the subtlety partly owes to the containment condition. Our \Cref{thm:rationality-intro} can be viewed as a higher-rank generalization of \cite{brv2020motivic}. In view of the techniques in both works, the main difficulty we overcome in higher rank stems from the fact that $\Quot_n^{\tl R}(\tl R^d)$ is not a finite set of points, unless $d=1$. Therefore, the extension map $L\mapsto \tl RL$ that lands in $\Quot_\bullet^{\tl R}(\tl R^d)$ no longer stratifies the source into a finite disjoint union of fibers.

Finally, our results have implications in several other fields. First, our computation of $\Zhat_R(t)$ completely solves the problem of counting commuting matrix pairs $(A,B)$ over finite fields satisfying the equation $A^2=B^{2m+1}$ or $A(A-B^m)=0$ by giving a generating function of the flavor of Feit--Fine \cite{feitfine1960pairs}. This is also equivalent to the problem of counting finite-length modules over the corresponding coordinate rings, with each isomorphism class weighted inversely by the size of the automorphism group. Second, the limit formula in \Cref{thm:A} (or rather, \Cref{thm:A-effective}) can be viewed as a new example of the motivic stabilization phenomenon in the spirit of Vakil and Wood \cite{vakilwood2015discriminants}. Third, our computational results confirm that the main conjecture of the first author \cite{huang2023mutually}, which states that the point-count version of $\Nhat_X(t):=\Zhat_X(t)/\Zhat_{\tl X}(t)$ for a reduced curve $X$ has an infinite radius of convergence in $t$, holds for the $y^2=x^n$ singularity. Finally, our results show that for the motivic degree zero Donaldson--Thomas theory of singular curves, the higher-rank theory does not factorize in terms of the rank 1 theory (unless we specialize to the Euler characteristic; see \Cref{prop:chi-power}), but is in fact more subtle and interesting.

\subsection{Outline of methods and plan of the paper}
The main technical contribution of this paper is a new method for the explicit computation of motivic and arithmetic \emph{lattice zeta functions} for \emph{local orders}, particularly in higher ranks. Our work thereby advances the intrinsic study of lattice zeta functions, a field of study that is deeply rooted in the classical Dedekind zeta function and order zeta function \cite{hey1927}, pioneered by Solomon \cite{solomon1977zeta}, and vastly enriched by Bushnell, Reiner, and Yun \cite{bushnellreiner1980zeta,bushnellreiner1981functional,bushnellreiner1981L,yun2013orbital}. While the rationality of the arithmetic lattice zeta functions was known via the harmonic analysis approach of Bushnell and Reiner \cite{bushnellreiner1980zeta}, that method has not, to our knowledge, previously yielded explicit multi-sum formulas for these series in rank $d>1$. Our approach, based on a novel parametrization of lattices, overcomes this obstacle and is naturally suited for a motivic upgrade. The previous best explicit results for the families of singularities studied here were due to Saikia \cite{saikia1988}, but only in rank 1.

The paper is organized as follows. In \Cref{sec:fundamental}, we review the structure of the relevant moduli spaces. In \Cref{sec:quot-to-coh}, we establish a simple but fundamental limit formula (\Cref{thm:A}) that expresses the unframed Coh zeta function as a high-rank limit of the Quot zeta function. The proof is based on a robust observation about the abundance of surjections from high-rank modules, which, while elementary, provides a transparent connection between these two theories. While geometric connections between finite-framing-rank theory and unframed theory have been a well-explored theme in quiver varieties (see for example \cite{jelisiejewsivic}), our precise motivic relation appears to be new.

The core of our new computational method is developed in \Cref{sec:lattice,sec:rationality,sec:motivic}. After developing our precise setup for local orders and lattices in \Cref{sec:lattice}, we prove in \Cref{sec:rationality} the rationality of the arithmetic lattice zeta function (\Cref{thm:rationality-arithmetic}) by introducing a new stratification of sublattices via an intermediate ``boundary lattice". More precisely, the key observation (\Cref{lem:padding}) and the use of the ``extension fiber'', defined by $E_R(\tl L):=\set{L\subeq_R \tl L: \tl R\cdot L=\tl L}$ for a lattice $\tl L$ over the normalization $\tl R$, reduce the problem of counting all sublattices to counting lattices within a bounded colength. In \Cref{sec:motivic}, we show that this parametrization-based approach is well-suited for a motivic upgrade, allowing us to prove the rationality of the motivic Quot zeta function (\Cref{thm:rationality-motivic}) using the geometry of affine Grassmannians. Crucially, these rationality theorems are formulated for arbitrary torsion-free modules, not just for $R^d$ as required in \Cref{thm:rationality-intro}. This level of generality is essential for our methods, as the most computationally accessible case is often the lattice $\tl R^d$, rather than $R^d$ itself. These ideas are further pursued and formalized in a much more general context in Appendix~\ref{sec:AGreduced}, where we introduce the notion of affine Grassmannians for arbitrary reduced curve germs and study ``constructible morphisms'' $\underline{\pi_*}, \underline{\vec{\pi}^*}$ and $ \underline{\vec{\pi}^!}$ that encode lattice operations that arise from the six functor formalism. 

In \Cref{sec:func-eq}, we prove a reflection principle (\Cref{thm:reflection-local}) for the arithmetic zeta function of the dualizing module over \emph{any} reduced curve singularities. The methods here are independent of the rest of the paper and follow the classical approach of harmonic analysis and Tate's thesis, extending the rank-1 results of \cite{yun2013orbital} to the higher-rank setting.

Finally, in \Cref{sec:prelim,sec:torus,sec:combo}, we apply this machinery. After collecting the necessary combinatorial preliminaries on $q$-series and Hall polynomials (\Cref{thm:hall_skew}), we derive the first explicit formulas for the Quot zeta functions of the $y^2=x^n$ singularities (\Cref{prop:cusp-tl,prop:node-tl,prop:cusp-full}, and \Cref{thm:node-full}). We then analyze these formulas to prove our main combinatorial theorems, including the simplification for the cusp case (\Cref{thm:cusp_simplification}) and the special value for the nodal case (\Cref{thm:node_special_value}), revealing their connections to Rogers--Ramanujan type identities.

\subsection*{Acknowledgements}
We thank Dima Arinkin for some important ideas towards \Cref{sec:rationality,sec:motivic}, and S.~Ole Warnaar for proving \Cref{lem:cusp_squaring}. The authors thank Minh-Tam Trinh for feedback on earlier drafts. We thank Jim Bryan, Barbara Fantechi, Asvin G, Lothar G\"{o}ttsche, Eugene Gorsky, Nathan Kaplan, Mikhail Mazin, Leonardo Mihalcea, Alexei Oblomkov, Ken Ono, and Dragos Oprea for fruitful discussions. The first author was partially supported by the AMS-Simons Travel Grant. The first author acknowledges the use of the AI model Gemini 2.5 Pro in improving the exposition and organization of this paper. All mathematical ideas and results presented herein are the authors' own. 

\section*{Notation and terminology}
For the reader's convenience, we collect the main notation and terminology used throughout the paper. We clarify a convention used in our constructions: while the base scheme $X$ or $\Spec R$ may be non-reduced (notably in \Cref{sec:quot-to-coh}), all moduli spaces we consider are constructed as locally closed subschemes of projective spaces. We do not distinguish these moduli spaces from their underlying reduced structures, as this treatment is sufficient for determining their motives.

\begin{tabular}{@{}p{0.13\textwidth}p{0.82\textwidth}}
$(a;q)_n,\;\qbinom{n}{k}_q$ & $q$-Pochhammer symbol and $q$-binomial coefficient. See \Cref{subsec:hypergeom}.\\
$g^{\lambda}_{\mu\nu}, \; g^\lambda_\mu$ & (Skew) Hall polynomial. See \Cref{def:hall-poly} and \Cref{thm:hall_skew}.
\end{tabular}

\begin{tabular}{@{}p{0.13\textwidth}p{0.82\textwidth}}
$k$ & An algebraically closed field of arbitrary characteristic, or a finite field. \\
$d$ & A non-negative integer representing the rank.\\
$R$ & A local order, typically the germ of a reduced curve. See \Cref{def:local-order}.\\
$R_{2,n}$ & The specific planar singularity for $y^2=x^n$. See \Cref{sec:torus}. \\
$\tl R$ & The normalization of $R$. See \Cref{def:tot-frac}. \\
$\tl R_i, K_i$ & A branch of $\tl R, K$. See \Cref{def:tot-frac}.\\
$e_i,\pi_i,\kappa_i$ & The idempotent, the uniformizer, and the residue field of $\tl R_i$. \\
$K$ & The total ring of fractions of $R$. See \Cref{def:tot-frac}. \\
$b(R)$ & The branching number of $R$. See \Cref{sec:lattice}. \\
$\fc$ & The conductor of $R$. See \Cref{def:conductor}.\\
$L,M,N$, etc. & $R$-lattice: A finitely generated $R$-submodule of $K^d$ whose $K$-span is $K^d$. See \Cref{subsec:lattice}.\\
$\tl L, \tl M, \tl N$, etc. & $\tl R$-lattices.\\
$E$ & torsion-free $R$-module: An abstract $R$-module isomorphic to an $R$-lattice. See \Cref{subsec:lattice}.\\
$E^d, \O_X^d$ & The direct sum $E^{\oplus d}, \O_X^{\oplus d}$.\\
$[L_1:L_2]_k$ & The $k$-colength between two lattices. See \Cref{lem:colength}. \\
$(L_1:L_2)$ & The relative index between two lattices. See \Cref{lem:colength}.\\
$[[\tl L_1:\tl L_2]]$ & The vector of branch-wise colengths between $\tl R$-lattices. See \eqref{eq:colength-vector}\\
$\boldsymbol{\rk}(Q)$ & The vector of branch-wise ranks of an $\tl R$-module $Q$. See \eqref{eq:rank-vector}.\\
$\abs{\boldsymbol{n}}$ & The sum of coordinates of any tuple of integers. \\
$\Omega_R$ & The dualizing module of $R$. See \Cref{sec:func-eq}. \\
$\delta$ & The additive Serre invariant, $[\tl R:R]_k$. \\
$\Delta$ & The multiplicative Serre invariant, $\abs{\tl R/R}$. \\
\end{tabular}

\begin{tabular}{@{}p{0.13\textwidth}p{0.82\textwidth}}
$\ut M$ & A fixed $\tl R$-lattice contained in $M$.\\
$E_R(\tl L)$ & The extension fiber $\set{L\subeq_R \tl L:\tl R L=\tl L}$. \\
$B_R(M;\ut M)$ & The boundary locus $\set{L\subeq_R M:\tl R L\supeq \ut M}$.\\
$L_b$ & Boundary lattice $L_b:=\tl R L\cap M$, where $L\in B_R(M;\ut M)$. See \Cref{subsec:boundary}.
\end{tabular}

\begin{tabular}{@{}p{0.13\textwidth}p{0.82\textwidth}}
$Z_E(t)$ & The motivic Quot zeta function, cf.~\eqref{def:motivic-lattice-zeta}. \\
$\zeta_E(s)$ & The arithmetic lattice zeta function. See \eqref{def:lattice-zeta}.\\
$\Zhat_R(t)$ & The motivic Coh zeta function of a ring $R$. See \eqref{eq:motivic-coh}.\\
$\zetahat_R(s)$ & The arithmetic Coh zeta function of a ring $R$. See \eqref{eq:arithmetic-coh}.\\
$\Zhat_{R,d}(t)$ & The rank $d$ finitized motivic Coh zeta function, $Z_{R^d}(\L^{-d}t)$. \\
$\zetahat_{R,d}(s)$ & The rank $d$ finitized arithmetic Coh zeta function, $\zeta_{R^d}(s+d)$. \\
$N_E(t)$ & The normalized motivic Quot zeta function, $Z_E(t)/Z_{\tl R^d}(t)$. \\
$\nu_E(s)$ & The normalized arithmetic lattice zeta function, $\zeta_E(s)/\zeta_{\tl R^d}(s)$. \\
$\Nhat_{R,d}(t)$ & The normalized finitized motivic Coh zeta function, $\Zhat_{R,d}(t)/\Zhat_{\tl R,d}(t)$. \\
$\nuhat_{R,d}(s)$ & The normalized finitized arithmetic Coh zeta function, $\zetahat_{R,d}(s)/\zetahat_{\tl R,d}(s)$. \\
\end{tabular}

\section{Quot schemes and commuting varieties}\label{sec:fundamental}  In this section, we start with the definitions of Quot schemes and commuting varieties for classical varieties (i.e., finite type $k$-schemes), and then march into the realm of formal varieties. Throughout the section, ket $k$ be an arbitrary field.

\subsection{Classical varieties}\label{subsec:classical} Let $X$ be a finite type scheme over $k$ and let $E$ be a coherent $\mathcal{O}_X$-module. Define the \textbf{Quot scheme} $\Quot_n^X(E)$ (in short, $\Quot_n(E)$) as a functor $\mathbf{Sch}_k^{\mathrm{op}}\rightarrow \mathbf{Set}$ that sends $S$ to $\mathcal{O}_{X_S}$-submodules of $\pi^* E$ whose quotients are locally free of rank $n$ over $S$; here and in the next paragraph, $X_S:=S\times_k X$ and $\pi:X_S\to X$ is the structural morphism. It is classically known that $\Quot_n(E)$ is representable by a projective scheme over $k$.

For $n\geq 0$, let $C_n(X)$ be the \textbf{commuting variety} over $X$ whose $S$-points are coherent sheaves $\calF$ on $X_S$ together with an isomorphism of $\O_S$-modules $\iota:\O_S^n\to \pi_* \calF$; see also \cite[Appendix]{huangjiang2023punctual} and \cite[Theorem~3.5]{fantechiricolfi2024structural}. On the level of $k$-points, $C_n(X)$ parametrizes points $\calF\in \Coh_n(X)$ together with an ordered $k$-basis of $H^0(X;\calF)$. More formally,
\begin{equation}
    C_n(X)(k)=\set{(\calF,\iota):\calF\in \Coh_n(X), \; \iota:k^n\overset{\simeq}{\to} H^0(X;\calF)}.
\end{equation}
The group $\GL_n$ acts on $C_n(X)$ via its natural action on $k^n$. If $X=\Spec R$ is affine, then $C_n(R)=C_n(X)$ equivalently parametrizes $k$-algebra homomorphisms $\rho: R \to \Mat_n(k)$, and the action of $\GL_n$ is given by conjugation on $\Mat_n$. 

Let $\Coh_n(X)$ be the stack of length $n$ coherent sheaves on $X$, and it can be realized as a global quotient $\Coh_n(X)=[C_n(X)/\GL_n]$; see \cite[Theorem~3.5]{fantechiricolfi2024structural}. Since $C_n(X)$ is a finite type scheme, $\Coh_n(X)$ is a finite type Artin stack. Moreover, since $\GL_n$ is a special group, we have $[\Coh_n(X)]=[C_n(X)]/[\GL_n]\in \KStck{k}$ by \cite{behrenddhillon2007}. 

\subsection{Formal varieties}\label{sub:indscheme} Let $k$ be a field, and $R$ be a completion of a finitely generated $k$-algebra with maximal ideal $\m$, and let $E$ be a coherent $R$-module. In this section, we define $\Quot^R_n(E), C_n(R)$ and $\Coh_n(R)$, essentially following the treatment of \cite[\S 2.2.1]{fantechiricolfi2024motivic}. More precisely, we will define them as ind-objects via taking limits over truncations, which is natural since $R$ is not of finite type over $k$. If $X$ is a finite type scheme and $p$ is a closed point of $X$ such that $\Ohat_{X,p}=R$, then the \textbf{punctual} (or \textbf{nilpotent}) commuting variety of $X$ supported at $p$, as well as the commuting variety of the formal variety $(X,p)$, is defined as $C_n(X,p):=C_n(R)$; similarly for $\Quot^{X,p}_n(E)$ and $\Coh_n(X,p)$.

Recall that an ind-scheme $Y$ is a functor, which can be written as a union of subfunctors $Y=\bigcup_i Y_i$, such that each $Y_i$ is represented by a scheme, and $Y_i\hookrightarrow Y_{i+1}$ is a closed immersion. If $\mathcal{P}$ is a property of schemes, e.g., projective, smooth, etc. Then an ind-scheme is said to be ind-$\mathcal{P}$, if every $Y_i$ has $\mathcal{P}$. 

\begin{definition}\label{def:indindind}
  Let $n \ge 0$. For any $r\geq 1$, consider $\Quot^{R/\m^r}_n(E/\m^r E)$ in the classical sense of \Cref{subsec:classical}, as $R/\m^r$ is of finite type over $k$. It has a natural closed embedding into $\Quot^{R/\m^{r+1}}_n(E/\m^{r+1} E)$. We then define $\Quot_n(E) :=  \bigcup_r\Quot^{R/\m^r}_n(E/\m^r E)$. Similarly, we define $C_n(R)$ as $\bigcup_r C_n(R/\m^r)$, and $\Coh_n(R)=\bigcup_r \Coh_n(R/\m^r)=[C_n(R)/\GL_n]$.
\end{definition}

By definition, $\Quot_n(E)$ and $C_n(R)$ are ind-finite type in-schemes, while $\Coh_n(R)$ is an ind-finite type ind-stack. Moreover, $\Quot_n(E)$ is ind-projective. However, their structures are in general very wild, even for the punctual Hilbert scheme of one point on an affine line! Consider $\Quot_1(E)$, with $R=k[[T]]=E$ and $k=\C$. While $\Quot_1(E)$ has only one $k$-point corresponding to $(T)E$, the scheme $\Quot_1(E)$ has complicated $S$-points when $S$ is non-reduced: $\Quot_1(E)(S)$ parametrizes Laurent tails of nilpotent elements in $S$ \cite[\S 2.4, Remark 2.1]{kivinentrinh2023}. 

However, the reduced structure of $\Quot_n(E)$ is manageable, for example, $\Quot_1(E)_{\red}=\Spec k$. The key observation is the following.
\begin{lemma}[Artinian reduction]\label{lem:artinian-reduction-geom}
Let $F$ be a field, $A$ be a Noetherian $F$-algebra, and let $M$ be a finitely generated $A$-module. If $M' \subeq M$ is an $A$-submodule of $F$-codimension $n$, then $M'$ must contain the submodule $J(A)^n M$, where $J(A)$ is the Jacobson radical of $A$.
\end{lemma}
\begin{proof}
Consider the decreasing chain $J(A)^i (M/M')$, $i\geq 0$ of submodules of $M/M'$. By Nakayama's lemma, it must be strictly decreasing until reaching zero. Since $F$ is a field and $\dim_F M/M'=n$, the length of any strictly decreasing chain in $M/M'$ cannot exceed $n$. As a result, $J(A)^n(M/M')=0$, namely, $J(A)^n M \subeq M'$.
\end{proof}

\begin{lemma}
    The closed embedding $\Quot^{R/\m^r}_n(E/\m^r E)\incl \Quot^{R/\m^{r+1}}_n(E/\m^{r+1}E)$ induces an isomorphism on the reduced structure if $r\geq n$. 
\end{lemma}
\begin{proof}
   It suffices to verify that the closed embedding induces a bijection on field-valued points.

    Fix $r\geq n$ and a field $F$ containing $k$.  Let $A=F\otimes_k R/\m^{r+1}$. Then $\m A$ is contained in the nilradical of $A$, and thus contained in the Jacobson radical of $A$.
    
    Let $M=F\otimes_k E/\m^{r+1} E$. For any $1\leq s\leq r+1$, the $F$-points of $\Quot^{R/\m^{s}}_n(E/\m^{s} E)$ are $A$-submodules $M'$ of $M$ that (1) have $F$-codimension $n$ and (2) contain $(\m A)^s M$. However, for any $s\geq n$, it follows from \Cref{lem:artinian-reduction-geom} that the  condition (2) is implied by (1). This shows that the closed embedding induces a bijection on $F$-points.
\end{proof}

The above lemma, and the analogues for $C_n$ and $\Coh_n$, imply that
\begin{equation}
    \Quot_n(E)_\red = \Quot_n^{R/\m^n}(E/\m^n)_\red, \;C_n(R)_\red = C_n(R/\m^n)_\red, \text{ and } \Coh_n(R)_\red = \Coh_n(R/\m^n)_\red.
\end{equation}

\begin{remark}
    Let $R=k[[T]]$ and $S$ be any $k$-algebra, then $S$-points of $C_n(R)$ are nilpotent $n\times n$ matrices over $S$, while $S$-points of $C_n(R/\m^n)$ are $n\times n$ matrices over $S$ whose $n$-th power vanishes. These sets are different if $S$ is not reduced, even if $n=1$. 
\end{remark}

\begin{remark}
    In \cite[\S 2.2.1]{fantechiricolfi2024motivic}, it is stated that $\Coh_n(R/\m^r)\to \Coh_n(R/\m^{r+1})$ is an isomorphism of stacks if $r\geq n$. This is not true unless we pass to the reduced structure. Let $k=\C, R=k[[T]], n=r=1$. Let $S=k[\varepsilon]/(\varepsilon^2)$. Then an $S$-object in $\Coh_1(R/\m^2)=\Coh_1(k[T]/T^2)$ is given by a (locally) free $S$-module of rank $1$ together with an endomorphism $x$ such that $x^2=0$, while an $S$-object in $\Coh_1(R/\m)=\Coh_1(k[T]/T)$ is given by a free $S$-module of rank $1$ together with an endomorphism $x$ such that $x=0$. However, these notions are different because $\varepsilon^2=0$. 
\end{remark}

Since motives in the Grothendieck ring of varieties (stacks) do not distinguish non-reduced structures, we identify these moduli spaces with their reduced structure by abuse of notation in the rest of the paper.

\section{Motivic Quot-to-Coh and Proof of Theorem \ref{thm:A}}\label{sec:quot-to-coh}
In this section, we prove \Cref{thm:A} by establishing an effective version of the limit formula. The core of the argument is a comparison between the stack $\Coh_n(X)$ and the commuting variety $C_n(X)$, and between the Quot scheme and a framed version of it, in the spirit of Nakajima's treatment of $\Hilb_n(\A^2)$ \cite{nakajima1999hilbert}. In this section, let $k$ be an algebraically closed field and $X$ be a classical or formal $k$-scheme. We will work with the reduced structure and the set of $k$-points throughout.  

\subsection{Framing data}
Consider the variety $V_{n,d}:=C_n(X)\times (\A^n)^d$, parametrizing points $\rho\in C_n(X)$ together with an order $d$-tuple of vectors $v_1,\dots,v_d\in k^n$, called the \textbf{framing vectors}. Each point $\mathbf{f}=(\rho,v_1,\dots,v_d)$ of $V_{n,d}$ is called a \textbf{framing datum}. Recall that each $\rho$ represents a pair $(\calF,\iota)$, so each framing datum gives rise to a morphism of coherent $\O_X$-modules $\Phi_{\mathbf{f}}:\O_X^d\to \calF$, whose $i$-th component is the morphism $\O_X\to \calF$ corresponding to the global section $\iota(v_i)\in H^0(X;\calF)$. A framing datum $\mathbf{f}$ is \textbf{stable} if $\Phi_{\mathbf{f}}$ is a surjection of $\O_X$-modules. Let $U_{n,d}\subeq V_{n,d}$ be the \textbf{stable locus} consisting of stable framing data; $U_{n,d}$ is open in $V_{n,d}$.

Let $\GL_n$ act on $V_{n,d}=C_n(X)\times (\A^n)^d$ via its action on $C_n(X)$ and its standard action on each $\A^n$. Note that this action restricts to a free $\GL_n$-action on $U_{n,d}$: if $g\in \GL_n(k)$ fixes $\mathbf{f}=((\calF,\iota),v_1,\dots,v_d)\in J_{n,d}$, then there is a sheaf automorphism $u\in \Aut(\calF)$ that fits both of the following diagrams:
\[
\begin{minipage}{0.25\textwidth}
\begin{tikzcd}[column sep=large, row sep=tiny]
    & \calF \\
    {\O_X^d} \\
    & \calF
    \arrow["u", from=1-2, to=3-2]
    \arrow["{\Phi_{\mathbf{f}}}", from=2-1, to=1-2]
    \arrow["{\Phi_{\mathbf{f}}}"', from=2-1, to=3-2]
\end{tikzcd}
\end{minipage}
\begin{minipage}{0.25\textwidth}
\begin{tikzcd}[column sep=large, row sep=large]
    H^0(X;\calF) \arrow[d, "H^0(u)"'] & k^n \arrow[l, "\iota"', "\simeq"] \arrow[d, "g"] \\
    H^0(X;\calF) & k^n \arrow[l, "\iota"',"\simeq"] 
\end{tikzcd}
\end{minipage}
\]
Since $\Phi_{\mathbf{f}}$ is surjective by assumption, $u$ is the identity morphism, which forces $g$ to be the identity matrix. The freeness of the action implies that the natural map $U_{n,d}\to \Quot_n(R^d)$ that sends $\mathbf{f}$ to the kernel of $\Phi_{\mathbf{f}}$ realizes $U_{n,d}$ as a $\GL_n$-torsor over $\Quot_n(R^d)$. This implies the motivic identity
\begin{equation}
    [\Quot_n(R^d)]=\frac{[U_{n,d}]}{[\GL_n]}\in \KStck{k}.
\end{equation}

The proof of \Cref{thm:A} relies on showing that for large $d$, the stable locus $U_{n,d}$ is a very good approximation of the full space $V_{n,d}$. This is quantified by the following dimension bound on the unstable locus.

\begin{lemma}\label{lem:unstable-locus}
    For $d,n\geq 1$, the unstable locus $Z_{n,d}:=V_{n,d}\setminus U_{n,d}$ has
    \begin{equation}
        \dim Z_{n,d} \leq \dim C_n(R)+ (d+1)(n-1).
    \end{equation}
\end{lemma}
\begin{proof}
    Let $W_{n,d} \subeq (\A^n)^d$ be the determinantal variety consisting of $d$-tuples of vectors that do not span $k^n$. Its dimension is at most $(d+1)(n-1)$. The dimension bound follows from the simple inclusion $Z_{n,d} \subeq C_n(R) \times W_{n,d}$: if $d$ global sections span $H^0(X;\calF)$ for a zero-dimensional sheaf $\calF$, then the induced morphism $\Phi: \O_X^d \to \calF$ is surjective.
\end{proof}

\subsection{An effective bound}
We now prove an effective version of \Cref{thm:A}, from which the theorem follows as an immediate corollary. We first recall the dimension filtration on the Grothendieck ring of stacks.

\begin{definition}[\cite{behrenddhillon2007}]
    For $m\in \Z$, let $F_m\KVar{k}[\L^{-1}]$ be the abelian subgroup of $\KVar{k}[\L^{-1}]$ generated by elements the form $[Y]/\L^n$ where $Y$ is a variety and $\dim Y - n \le m$. This defines a filtration on $\KVar{k}[\L^{-1}]$ satisfying $F_m \cdot F_n\subeq F_{m+n}$, and a sequence $(a_d)_d$ converges to $0$ if $a_d \in F_{m_d}$ with $m_d \to -\infty$ as $d\to\infty$. We note that $[\GL_n]^{-1} \in F_{-n^2}$ and $\L^{-1} \in F_{-1}$. By definition, $\KStck{k}$ is the completion of $\KVar{k}[\L^{-1}]$, with dimension filtration $F_m$ defined as the closure of $F_m\KVar{k}[[\L^{-1}]]$. 
\end{definition}

\begin{theorem}[Effective \Cref{thm:A}]\label{thm:A-effective}
    For any $d,n\geq 1$, we have
    \begin{equation}
        [\Coh_n(X)]-\frac{[\Quot_n(\O_X^d)]}{\L^{nd}} \in F_{-d+\dim C_n(X)-n^2+n-1}.
    \end{equation}
    In particular, 
    \begin{equation}
        [\Coh_n(X)]=\lim_{d\to \infty} \frac{[\Quot_n(\O_X^d)]}{\L^{nd}}.
    \end{equation}
\end{theorem}
\begin{proof}
   In $\KStck{k}$, we have
    \begin{align}
        [Z_{n,d}] &= [V_{n,d}]-[U_{n,d}] \\
        &= [C_n(X)]\L^{nd}-[\GL_n][\Quot_n(\O_X^d)] \\
        &= [\GL_n]\left([\Coh_n(X)]\L^{nd}-[\Quot_n(\O_X^d)]\right).
    \end{align}
    Multiplying by $\L^{-nd}/[\GL_n]$ on both sides and applying the dimension bound from \Cref{lem:unstable-locus}, we have
    \begin{align}
        [\Coh_n(X)]-\frac{[\Quot_n(\O_X^d)]}{\L^{nd}} &= [Z_{n,d}]\L^{-nd} [\GL_n]^{-1} \\
        &\in F_{\dim C_n(X)+(d+1)(n-1)-nd-n^2}=F_{-d+\dim C_n(X)-n^2+n-1},
    \end{align}
    as claimed. The convergence follows because the dimension of the error term tends to $-\infty$ as $d\to\infty$.
\end{proof}

\begin{proof}[Proof of Theorem \ref{thm:A}]
    The convergence of the $t^n$-coefficients of the series is established by \Cref{thm:A-effective}.
\end{proof}

\subsection{Side remarks: examples}
We note how \Cref{thm:A} connects existing results on $\Coh_n(X)$ and $\Quot_n(\O_X^d)$. Let $Z_X(t):=\sum_{n\geq 0} [\Sym^n(X)]\,t^n\in \KVar{k}[[t]]$ denote the motivic zeta function of a variety \cite{kapranov2000}.

\begin{example}
    If $C$ is a smooth curve, it is known that (\cite{bifet1989,bfp2020motive})
    \begin{equation}
        Z_{\O_C^d}(t)=\prod_{i=0}^{d-1} Z_C(\L^i t).
    \end{equation}
    By \Cref{thm:A}, we recover the classical formula for the Coh zeta function:
    \begin{equation}
        \Zhat_C(t)=\lim_{d\to \infty} Z_{\O_C^d}(\L^{-d} t)=\lim_{d\to \infty} \prod_{i=0}^{d-1} Z_C(\L^{i-d} t)=\prod_{j=1}^\infty Z_C(\L^{-j} t).
    \end{equation}
\end{example}

\begin{example}
    If $S$ is a smooth surface, a result of Mozgovoy \cite{mozgovoy2019} gives
    \begin{equation}
        Z_{\O_S^d}(t)=\prod_{i=0}^{d-1}\prod_{j=0}^\infty Z_S(\L^{i+dj}t^{1+j}).
    \end{equation}
    Applying \Cref{thm:A} and taking the limit as $d\to\infty$, we recover the celebrated theorem of Feit--Fine, proven motivically by Bryan--Morrison \cite{feitfine1960pairs,bryanmorrison2015motivic}:
    \begin{equation}
        \Zhat_S(t)=\prod_{i=1}^\infty \prod_{j=1}^\infty Z_S(\L^{-i}t^j).
    \end{equation}
\end{example}

\section{Torsion-free bundles over reduced curves, via lattices}\label{sec:lattice}

In this section, we develop the core algebraic framework for our study. We translate the geometric problem of counting quotients of torsion-free bundles on a curve into the algebraic language of counting sublattices of modules over local rings.

\subsection{The local setup}
We begin by defining the local rings that capture the geometry of curve singularities. Let $R$ be a complete reduced Noetherian local ring of Krull dimension 1 with maximal ideal $\m$ and residue field $\kappa$. The set of minimal prime ideals of $R$ is finite; we denote it by $\set{\p_1,\dots,\p_{b(R)}}$, where $b(R)$ is the \textbf{branching number} of $R$.

\begin{definition}\label{def:tot-frac}
    For a ring $R$ as above, let $K_i$ be the field of fractions of the integral domain $R/\p_i$. The \textbf{total ring of fractions} of $R$ is $K:=\prod_i K_i$. The \textbf{normalization} of $R$ is the ring $\tl R:=\prod_i \tl R_i$, where $\tl R_i$ is the integral closure of $R/\p_i$ in $K_i$.
\end{definition}

We focus on rings that satisfy a key finiteness condition relating them to their normalizations.
\begin{definition}\label{def:local-order}
    A ring $R$ as above is a \textbf{local order} if its normalization $\tl R$ is a finitely generated $R$-module.
\end{definition}

\begin{example}\label{eg:local-orders}
The following settings are the exclusive focus of this paper:
    \begin{enumerate}
        \item A local order is \textbf{arithmetic} if its residue field $\kappa$ is a finite field.
        \item A local order is \textbf{geometric} if its residue field $\kappa$ is an algebraically closed field, which we denote by $k$. In this case, $R$ is a $k$-algebra, and we say $R/k$ is a geometric local order.
    \end{enumerate}
By applying \cite[Chapter II]{serrelocalfields} to $\tl R_i$, these abstract definitions correspond to concrete situations: an arithmetic local order arises as a finite extension of $\Zp$ or $\Fp[[T]]$, and a geometric local order arises as the germ of a reduced curve over $k$.
\end{example}

\subsection{Some basic structures}
Let $R$ be a local order. Since each $\tl R_i$ is a complete, integrally closed, one-dimensional local domain, it is a complete DVR. Let $\pi_i$ be a uniformizer for $\tl R_i$ with residue field $\kappa_i$. For any $\tl R$-module $M$, there is a unique decomposition $M=\prod_i e_i M$, where $e_i$ is the idempotent for the $i$-th branch.

\begin{definition}\label{def:conductor}
    The \textbf{conductor} of $R$ is the ideal $\fc:=\set{f\in K: f\tl R\subeq R}$. It is the largest ideal of $\tl R$ contained in $R$.
\end{definition}

\begin{lemma}\label{lem:conductor-finite}
    The conductor of $R$ is of finite colength in $\tl R$.
\end{lemma}
\begin{proof}
    Since $\fc$ is an ideal of $\tl R$, it suffices to show that for each branch $i$, the component $e_i\fc$ is non-zero. Let $v_1,\dots,v_h\in \tl R$ be a set of generators for $\tl R$ as an $R$-module. For each $i$ and $j$, the element $v_j e_i$ is in $K$, so there exists a non-zero-divisor $g_{ij} \in R$ such that $g_{ij} v_j e_i \in R$. Let $g_i = \prod_j g_{ij}$. Then $g_i e_i$ is a non-zero element of $\tl R$ satisfying $g_i e_i v_j \in R$ for all $j$. It follows that $g_i e_i \tl R \subeq R$, so $g_i e_i \in \fc$, as required.
\end{proof}

\subsection{Torsion-free modules and lattices}\label{subsec:lattice}
We now define the central objects of study in this algebraic framework.
\begin{definition}
    A finitely generated $R$-module $E$ is \textbf{torsion-free of rank $d$} if the natural map $E\to E\otimes_R K$ is injective and $E\otimes_R K\simeq K^d$. An \textbf{$R$-lattice} in a free $K$-module $V$ is a finitely generated $R$-submodule of $V$ whose $K$-span is $V$.
\end{definition}

An $R$-lattice of rank $d$ is itself a torsion-free module of rank $d$, and conversely, any torsion-free module of rank $d$ is isomorphic to an $R$-lattice. While many of our results depend only on the isomorphism class of a module, the proofs often require working with a specific lattice embedding. The notions of torsion-free $\tl R$-modules and $\tl R$-lattices are similarly defined.

\begin{remark}\label{rmk:lattice-bounded}
    We collect some finiteness properties that we will use throughout the paper. For any $R$-lattice $L$, its \textbf{extension} $\tl R L$ is an $\tl R$-lattice, and is the smallest $\tl R$-lattice containing $\L$. Every $\tl R$-lattice of rank $d$ is isomorphic to $\tl R^d$, and is sandwiched between two $\tl R$-lattices of the form
    \begin{equation}\label{eq:lattice-bounded}
        \pi^N \tl R^d \subeq \tl L\subeq \pi^{-N} \tl R^d\text{ for }N\gg 0,
    \end{equation}
    where $\pi=\sum_{i=1}^{b(R)} \pi_i e_i$. The analogous boundedness statement \eqref{eq:lattice-bounded} for $R$-lattices also holds because every $R$-lattice $L$ is sandwiched between two $\tl R$-lattices: $\fc \tl R L \subeq L \subeq \tl R L$. This boundedness easily implies the following facts. A submodule $L'$ of an $R$-lattice $L$ is itself a lattice if and only if $L/L'$ is of finite length. If $L_1,L_2$ are $R$-lattices in the same $K^d$, then so are $L_1+L_2$ and $L_1\cap L_2$.
\end{remark}

We use a flexibly rescaled notion of colength to measure the relative size of lattices. 
\begin{definition-lemma}\label{lem:colength}
    For a local order $(R,\m,\kappa)$ and a subfield $k\subeq \kappa$ with $[\kappa:k]<\infty$, there exists a unique function assigning an integer $[L_1:L_2]_k$, the \textbf{$k$-colength}, to any pair of $R$-lattices, such that:
    \begin{enumerate}
        \item If $L_1\supeq L_2$,  then $[L_1:L_2]_k = \ell_k(L_1/L_2):=[\kappa:k]\cdot \ell(L_1/L_2)$, where $\ell$ is the standard module length in terms of the Jordan--H\"older filtration.
        \item The function is additive in chains: $[L_1:L_3]_k=[L_1:L_2]_k+[L_2:L_3]_k$.
    \end{enumerate}
    If $k$ is a finite field, the \textbf{relative index} is $(L_1:L_2) := \abs{k}^{[L_1:L_2]_k}$, which is independent of the choice of $k$. If $L_1\supeq L_2$, then $(L_1:L_2)=\abs{L_1/L_2}$.
\end{definition-lemma}
\begin{proof}
    The uniqueness is clear from the defining properties. For existence, take
    \begin{equation}
        [L_1:L_2]_k=\ell_k(L_1+L_2/L_2)-\ell_k(L_1+L_2/L_1).\qedhere
    \end{equation} 
\end{proof}

\subsection{Lattice zeta functions}
We now define the central objects of the paper: the generating functions that enumerate sublattices. A key principle that ensures these are well-defined is the ``Artinian reduction" (cf.~\Cref{lem:artinian-reduction-geom}), which shows that for any fixed colength, the problem of counting sublattices can be reduced to a problem of counting submodules within a finite-length (Artinian) module. Below, we give a version of \Cref{lem:artinian-reduction-geom} that applies to any local order.

\begin{lemma}[Artinian reduction, cf.~\Cref{lem:artinian-reduction-geom}]
Let $R$ be a local order with maximal ideal $\m$, and let $E$ be a finitely generated torsion-free $R$-module. If $E' \subeq E$ is a submodule of $k$-colength $n$, then $E'$ must contain the submodule $\m^n E$. \label{lem:artinian-reduction}
\end{lemma}
\begin{proof}
The quotient $E/E'$ is an $R$-module of $k$-length $n$, thus of length $n/[\kappa:k]$. By the same argument as in \Cref{lem:artinian-reduction-geom}, $\m^{n/[\kappa:k]}(E/E')=0$. As $n\geq n/[\kappa:k]$, this implies $\m^n E \subeq E'$.
\end{proof}

\begin{definition}
Let $E$ be a torsion-free module of rank $d$ over a local order $R$.
\begin{enumerate}
    \item If $R$ is an arithmetic local order, its \textbf{arithmetic lattice zeta function} is
    \begin{equation}\label{def:lattice-zeta}
        \zeta_E(s)=\zeta_E^R(s):=\sum_{E'\subeq_R E} \abs{E/E'}^{-s},
    \end{equation}
    summing over all finite-colength submodules $E'$. It is a well-defined Dirichlet series by \Cref{lem:artinian-reduction}.
    \item If $R/k$ is a geometric local order, its \textbf{motivic lattice zeta function} is
    \begin{equation}\label{def:motivic-lattice-zeta}
        Z^R_E(t):=\sum_{n=0}^\infty [\Quot_n(E)]\, t^n \in \KVar{k}[[t]].
    \end{equation}
\end{enumerate}
\end{definition}

\begin{remark}\label{rmk:quot-k-scheme-struct}
    Recall that $\Quot_n(E)=\Quot_n^{R/\m^r}(E/\m^r E)$ for any $r\geq m$, up to reduced structure. This naturally embeds $\Quot_n(E)$ into the classical Grassmannian $\Quot_n^k(E/\m^r E)$ as a closed subset. 
\end{remark}

\begin{remark}\label{rmk:z-arithmetic}
    The two notions of zeta functions are related by point-counts. For an arithmetic order $(R,\m,\kappa)$, let $k$ be a subfield of $\kappa$ with $q=\abs{k}$. The point-count generating series
    \begin{equation}
        Z^{R/k}_E(t):=\sum_{E'\subeq_R E} t^{[E:E']_k}
    \end{equation}
    is related to the arithmetic zeta function by the change of variables $\zeta^R_E(s)=Z^{R/k}_E(q^{-s})$. While $Z^{R/k}_E(t)$ depends on the choice of $k$, $\zeta^R_E(s)$ does not.
\end{remark}

\subsection{From local to global}
The computation of global Quot zeta functions on a curve $X$ reduces to the local ones on its singularities.
\begin{theorem}[\cite{fantechiricolfi2024motivic}]\label{thm:local-to-global}
    Let $k$ be an algebraically closed field of characteristic zero, and $X/k$ be a reduced curve with singular points $p_1,\dots,p_h$. For a torsion-free bundle $\calE$ on $X$ of rank $d$, the global Quot zeta function factorizes as
    \begin{equation}
        Z_\calE^X(t)=Z_{\calE|_{X^{\mathrm{sm}}}}^{X^\mathrm{sm}}(t) \cdot \prod_{j} Z_{\calE|_{p_j}}^{\Ohat_{X,p_j}}(t).
    \end{equation}
\end{theorem}
Since the smooth factor is well-understood (see \eqref{eq:bfp}), the problem reduces to computing the punctual zeta functions for the local rings $R_j = \Ohat_{X,p_j}$. A similar decomposition holds for the arithmetic lattice zeta function using a standard Euler product argument \cite{solomon1977zeta}.

\begin{remark}
    \Cref{thm:local-to-global} is the one-dimenensional case of \cite[Thm.~5.3]{fantechiricolfi2024motivic} with one slight generalization: the original statement assumes $\calE$ is a locally free sheaf, which corresponds to the special case where each $E_j:=\calE|_{p_j}$ is a free module over $R_j:=\Ohat_{X,p_j}$, while in our setting, we allow $E_j$ to be a torsion-free $R_j$-module not isomorphic to $(R_j)^d$. However, the proof of \cite[Theorem~5.3]{fantechiricolfi2024motivic} generalizes because the stratification in \cite[Corollary~3.12]{fantechiricolfi2024motivic} still applies.
\end{remark}

For future reference, we record the zeta functions in the smooth curve case. The global formula is due to \cite{bfp2020motive}, and the local and arithmetic versions are due to \cite{bifet1989} and \cite{solomon1977zeta}, respectively:
\begin{align}
    Z_{\calE}^{X^\mathrm{sm}}(t) &= \prod_{i=0}^{d-1} Z_{X^{\mathrm{sm}}}(\L^i t), \quad \calE \text{ a vector bundle of rank } d, \label{eq:bfp} \\
    Z_{\tl R^d}^{\tl R}(t) &= \frac{1}{(t;\L)_d^{b(R)}}, \\
    \zeta_{\tl R^d}^{\tl R}(s) &= \frac{1}{\prod_{i=1}^{b(R)} (\abs{\kappa_i}^{-s};\abs{\kappa_i})_d}.\label{eq:solomon}
\end{align}

\section{Rationality: arithmetic}\label{sec:rationality}

In this section, we establish the key structural lemmas that underpin the proof of our main rationality theorems. We then apply this framework to prove the rationality of the arithmetic lattice zeta function (\Cref{thm:rationality-arithmetic}). Throughout, $(R,\m,\kappa)$ is a local order. We fix a subfield $k\subeq \kappa$ with $[\kappa:k]<\infty$, and abbreviate the $k$-colength $[L_1:L_2]_k$ as $[L_1:L_2]$.

\subsection{Some commutative algebra}
We first collect two standard results that will be used throughout our constructions.

\begin{lemma}\label{lem:sur_homogeneity}
    Let $S$ be a direct product of finitely many Noetherian local rings, and $M$ be a finitely generated module over $S$. Then $\GL_n(S)$ acts transitively on the set of surjections $\Surj(S^n,M)$.
\end{lemma}
\begin{proof}
    We may assume $S$ is a local ring. The conclusion then follows from the uniqueness of the minimal free resolution, see \cite[Theorem~20.2]{eisenbudcommutative}. 
\end{proof}

\begin{lemma}\label{lem:extension_problem}
    Let $V_1$ be a direct summand of a module $V$ over a ring $A$. Any decomposition $V=V_1\oplus V_2$ induces a bijection
    \begin{equation}
        \set{W\subeq_A V: W+V_1=V} \simeq \set{(W',\varphi): W'\subeq_A V_1, \varphi\in \Hom_A(V_2,V_1/W')},
    \end{equation}
    where a submodule $W$ corresponds to the pair $(W\cap V_1, V_2 \to V/W \simeq V_1/(W\cap V_1))$ using the second isomorphism theorem, and conversely, a pair $(W',\varphi)$ corresponds to the kernel of $1\oplus \varphi:V_1\oplus V_2\to V_1/W'$. Moreover, this bijection preserves the quotient module, i.e., $V/W \simeq_A V_1/W'$.
\end{lemma}
\begin{proof}
    This is a straightforward check.
\end{proof}

\subsection{Extension fiber}
Our main strategy is to compare the problem of counting $R$-lattices to the well-understood problem of counting $\tl R$-lattices. The connection is made via the \textbf{extension fiber}, defined for an $\tl R$-lattice $\tl L$ as
\begin{equation}
    E_R(\tl L):=\set{L\subeq_R \tl L: \tl RL=\tl L},\quad E_R(\tl L;n):=\set{L\in E_R(\tl L):[\tl L:L]=n}.
\end{equation}

A crucial observation is that any lattice in an extension fiber is bounded by the conductor.
\begin{lemma}\label{lem:bounded}
    For all $L\in E_R(\tl L)$, we have $\fc \tl L\subeq L\subeq \tl L$.
\end{lemma}
\begin{proof}
    This follows from $L=RL\supeq \fc L=\fc \tl R L=\fc \tl L$.
\end{proof}
Consequently, the colength $[\tl L:L]$ is bounded by $[\tl L:\fc\tl L]=d[\tl R:\fc]$. This observation is sufficient to prove the rationality for the simplest case.

\begin{theorem}\label{thm:rationality-tl-arithmetic}
    If $R$ is an arithmetic local order and $\tl E$ is a torsion-free module over $\tl R$ of rank $d$, its normalized zeta function
    \begin{equation}\label{eq:rationality-tl-arithmetic}
        \nu^R_{\tl E}(s):=\frac{\zeta^R_{\tl E}(s)} {\zeta^{\tl R}_{\tl R^d}(s)}=\sum_{N\in E_R(\tl R^d)} \abs{\tl R^d/N}^{-s}
    \end{equation}
    is a Dirichlet polynomial.
\end{theorem}
\begin{proof}
    We may assume $\tl E$ is a $\tl R$-lattice. Because the extension of any $R$-sublattice of $\tl E$ is contained in $\tl E$, we can write 
    \begin{equation}
        \zeta^R_{\tl E}(s) = \sum_{\tl L \subeq_{\tl R} \tl E} (\tl E:\tl L)^{-s} \sum_{L \in E_R(\tl L)} (\tl L:L)^{-s}
    \end{equation}
    Since all $\tl R$-lattices of rank $d$ are isomorphic to $\tl R^d$, we may replace $\tl L$ in the inner sum by $\tl R^d$, so the formula \eqref{eq:rationality-tl-arithmetic} follows. Finally, the right-hand side of \eqref{eq:rationality-tl-arithmetic} is a Dirichlet polynomial by \Cref{lem:bounded}.
\end{proof}

\subsection{Restricted extension fiber and the padding lemma}
When counting sublattices of an $R$-lattice $M$ (which is not necessarily an $\tl R$-lattice), the condition $L\subeq M$ no longer implies $\tl L \subeq M$. To handle this, we introduce the \textbf{restricted extension fiber}
\begin{equation}
    E_R(\tl L;M):=\set{L\in E_R(\tl L):L\subeq M}, \quad E_R(\tl L;M;n):=\set{L\in E_R(\tl L;n):L\subeq M}.
\end{equation}
The key to controlling the lattices $\tl L$ that we need to consider is the following crucial ``padding" lemma. It shows that the structure of $E_R(\tl L;M)$ depends only on the restricted extension fiber of the "padded" lattice $\tl L + \ut M$ for a fixed $\tl R$-lattice $\ut M \subeq M$.

\begin{lemma}[Padding lemma]\label{lem:padding}
    Let $\ut{M}$ be a fixed $\tl R$-lattice contained in an $R$-lattice $M$. For any two $\tl R$-lattices $\tl N, \tl L$ such that $\tl N+\ut M=\tl L$:
    \begin{enumerate}
        \item There exists an isomorphism $h:\tl L\to \tl N$ such that $(h-\mathrm{id})(\tl L)\subeq \ut M$ and $(h^{-1}-\mathrm{id})(\tl N)\subeq \ut M$.
        \item Any such $h$ induces a canonical bijection $E_R(\tl L;M;n)\to E_R(\tl N;M;n)$ for any colength $n$ by sending $L$ to $h\cdot L$.
    \end{enumerate}
\end{lemma}
\begin{proof}
    For (a), consider the diagram of natural maps:
    \[
    \begin{tikzcd}
    \tl N \arrow[dr, two heads] & & \tl L \arrow[ll, "h", dashed, "\simeq"'] \arrow[dl, two heads] \\
    & \tl L/\ut M &
    \end{tikzcd}
    \]
    The map $\tl N \to \tl L/\ut M$ is surjective because $\tl L = \tl N + \ut M$. Since $\tl L$ and $\tl N$ are both isomorphic to $\tl R^d$, \Cref{lem:sur_homogeneity} guarantees the existence of an isomorphism $h:\tl L \to \tl N$ making the diagram commute. This commutativity implies that for any $x \in \tl L$, $hx-x \in \ut M$, as required; similarly for $h^{-1}$.

    For (b), the map $L \mapsto hL$ is a bijection from $E_R(\tl L;n)$ to $E_R(\tl N;n)$. We need only check it preserves the condition of being contained in $M$. For any $L \in E_R(\tl L;M)$ and any $x \in L$, we have $x \in M$. From part (a), $hx - x \in \ut M \subeq M$. Thus, $hx \in M$, which shows $hL \subeq M$. The same argument applied to $h^{-1}$ shows the inverse map also preserves the condition. 
\end{proof}

\subsection{Padding fiber}
The padding lemma necessitates studying the set of all $\tl R$-lattices $\tl N$ that can be ``padded" to a given $\tl L$, a set we call the \textbf{padding fiber}:
\begin{equation}
    \Pd(\tl L;\ut M):=\set{\tl N\subeq_{\tl R} \tl L:\tl N+\ut M=\tl L}.
\end{equation}
The goal of this subsection is to compute a suitable generating function for the padding fiber. To state our results, we first need to define some branch-wise invariants. 
\begin{definition}
For any finitely generated $\tl R$-module $Q$, its \textbf{rank vector} is
\begin{equation}\label{eq:rank-vector}
    \boldsymbol{\rk}(Q):=(\rk_i(Q))_{i=1}^{b(R)}, \quad \text{where}\quad \rk_i(Q):=\dim_{\kappa_i} (Q\otimes_{\tl R} \kappa_i).
\end{equation}
For two $\tl R$-lattices $\tl L_1, \tl L_2$, their \textbf{$k$-colength vector} is
\begin{equation}\label{eq:colength-vector}
    [[\tl L_1:\tl L_2]]:=([\tl L_1\otimes_{\tl R} \tl R_i:\tl L_2\otimes_{\tl R} \tl R_i])_{i=1}^{b(R)},
\end{equation}
and similarly, in the arithmetic setting, the \textbf{index vector} is
\begin{equation}\label{eq:index-vector}
    ((\tl L_1:\tl L_2)):=\parens[\bigg]{(\tl L_1\otimes_{\tl R} \tl R_i:\tl L_2\otimes_{\tl R} \tl R_i)}_{i=1}^{b(R)},
\end{equation}
\end{definition}
For a system of variables $\boldsymbol{s}=(s_1,\dots,s_{b(R)})$ and a positive integer vector $\boldsymbol{q}=(q_1,\dots,q_{b(R)})$, we use the notation $\boldsymbol{q}^{-\boldsymbol{s}} := q_1^{-s_1}\cdots q_{b(R)}^{-s_{b(R)}}$, viewed as a multivariate Dirichlet monomial.

The structure of the padding fiber is described by the following lemma, which provides an explicit parametrization.

\begin{lemma}\label{lem:padding_fiber}
    Let $\ut M \subeq \tl L$ be two $\tl R$-lattices, and let $\boldsymbol{r}=\boldsymbol{\rk}(\tl L/\ut M)$. Then 
    \begin{enumerate}
        \item There exists a (non-canonical) direct sum decomposition of $\tl R$-module $\tl L = L^{\mathrm{in}}\oplus L^{\mathrm{out}}$ such that $L^{\mathrm{in}}\subeq \ut M$, $\boldsymbol{\rk}(L^{\mathrm{out}})=\boldsymbol{r}$, and the padding fiber is precisely the set of sublattices $\tl N$ that satisfy $\tl N+L^{\mathrm{in}}=\tl L$. 
        \item Any decomposition as above canonically induces a bijection
        \begin{equation}
        \Pd(\tl L;\ut M) \simeq \prod_{i=1}^{b(R)} \set*{(N_i',\varphi_i): N_i'\subeq_{\tl R_i} \tl R_i^{d-r_i}, \varphi_i\in \Hom_{\tl R_i}(\tl R_i^{r_i}, \tl R_i^{d-r_i}/N_i')}.
        \end{equation}
        Moreover, under this bijection, the colength vector for $\tl N\in \Pd(\tl L;\ut M)$ is given by $[[\tl L:\tl N]]=([[\tl R_i^{d-r_i}:N_i']])_{i=1}^{b(R)}$.
    \end{enumerate}
\end{lemma}
\begin{proof}
We may assume $\tl R$ has a single branch, so $\boldsymbol{r}=r$ is an integer.
\begin{enumerate}
    \item To construct the decomposition, we consider the quotient map $\rho:\tl L\to \tl L/\ut{M}$ and fix a surjection $\alpha:\tl R^r\to \tl L/\ut{M}$. Since $\tl L$ is free (thus projective), there exists a map $\beta:\tl L\to \tl R^r$ such that $\rho=\alpha\circ\beta$. Since $\alpha$ is an isomorphism modulo $\pi \tl R$, $\beta$ is surjective modulo $\pi \tl R$, and thus surjective by Nakayama's lemma. We define $L^{\mathrm{in}}:=\ker(\beta)$, which is contained in $\ut M$ by the diagram. Since $\tl R^r$ is free, the surjection $\beta$ splits, providing a submodule $L^{\mathrm{out}} \simeq \tl R^r$ such that $\tl L=L^{\mathrm{in}}\oplus L^{\mathrm{out}}$.
    \[ 
    \begin{tikzcd}[column sep=large, row sep=large]
    L^\mathrm{in} \arrow[r, hook] &
    \tl L \arrow[dr, "\rho"', two heads] \arrow[r, "\beta", two heads] &
    \tl R^r \arrow[d, "\alpha", two heads] \arrow[l, bend right=45, "\gamma"'] \\
    & & \tl L/\ut{M}
    \end{tikzcd}
    \]
    We now show that for this decomposition, $\tl N+\ut M=\tl L$ if and only if $\tl N+L^{\mathrm{in}}=\tl L$. Consider the following diagram consisting of natural maps:
    \[\begin{tikzcd}[column sep=large, row sep=large]
    & \tl N \arrow[dl, "{\alpha_2}"'] \arrow[dr, "{\alpha_1}"] & \\
    \tl L/L^\mathrm{in} \arrow[rr, "{\delta}"] & & \tl L/\ut{M}
    \end{tikzcd}\]
    Since $\rk(\tl L/L^{\mathrm{in}})=\tl L/\ut M$, $\delta$ is an isomorphism modulo $\pi \tl R$. By Nakayama's lemma, the surjectivities of $\alpha_1$ and $\alpha_2$ are equivalent, finishing the proof.
    \item The condition $\tl N+L^{\mathrm{in}}=\tl L$ is precisely the setup for \Cref{lem:extension_problem}, which gives the desired bijection. The statement about the colength vector follows from the second isomorphism theorem. \qedhere
\end{enumerate}
\end{proof}

This classification allows us to compute the generating function for the padding fiber directly.

\begin{proposition}\label{prop:padding-fiber-gen-arithmetic}
    Let $R$ be an arithmetic local order. Fix $\tl R$-lattices $\tl L\supeq \ut M$ such that $\boldsymbol{\rk}(\tl L/\ut M)=\boldsymbol{r}$. Then we have an identity of Dirichlet series in $s_1,\dots,s_{b(R)}$: 
    \begin{equation}\label{eq:padding-fiber-gen-arithmetic}
        \sum_{\tl N\in \Pd(\tl L;\ut M)} ((\tl L:\tl N))^{-\boldsymbol{s}} = \prod_{i=1}^{b(R)} \frac{1}{(q_i^{r_i-s_i};q_i)_{d-r_i}},
    \end{equation}
    where $q_i=\abs{\kappa_i}$. In particular, taking $s_i=s$ for all $i$, we get
    \begin{equation}
        \sum_{\tl N\in \Pd(\tl L;\ut M)} (\tl L:\tl N)^{-s} = \prod_{i=1}^{b(R)} \frac{1}{(q_i^{r_i-s};q_i)_{d-r_i}}.
    \end{equation}
\end{proposition}
\begin{proof}
    Using the bijection from \Cref{lem:padding_fiber}, we can compute the sum by iterating over all choices of $N'_i$ and $\varphi_i$ for each branch, and then take the product over all branches. For a branch $i$, fix identifications $L^{\mathrm{in}}\otimes_{\tl R} \tl R_i\simeq \tl R_i^{d-r_i}$ and $L^{\mathrm{out}}\otimes_{\tl R} \tl R_i\simeq \tl R_i^{r_i}$. For a fixed sublattice $N'_i \subeq_{\tl R_i} \tl R_i^{d-r_i}$, the number of homomorphisms $\varphi_i:\tl R_i^{r_i}\to \tl R_i^{d-r_i}/N_i'$ is $(\tl R_i^{d-r_i}:N'_i)^{r_i}$. The generating function for the $i$-th branch is therefore
    \begin{equation*}
        \sum_{N'_i \subeq_{\tl R_i} \tl R_i^{d-r_i}} (\tl R_i^{d-r_i}:N'_i)^{r_i} (\tl R_i^{d-r_i}:N'_i)^{-s_i}  = \zeta^{\tl R_i}_{\tl R_i^{d-r_i}}(s_i-r_i).
    \end{equation*}
    By Solomon's formula \eqref{eq:solomon}, this is equal to $1/(q_i^{r_i-s_i};q_i)_{d-r_i}$. Taking the product over all branches gives the result.
\end{proof}

\subsection{Rational formulas}
We now combine the preceding structural results to give a rational formula for the arithmetic lattice zeta function. The key is to stratify the set of all sublattices according to the rank vector of their extension.

\begin{definition}\label{def:boundary-locus}
Let $M$ be an $R$-lattice and $\ut M$ be an $\tl R$-lattice contained in $M$. The \textbf{boundary locus} is the set of $R$-lattices
\begin{equation}
    B_R(M;\ut M) := \set*{
        L\subeq_R M: \tl RL\supeq \ut M
    },
\end{equation}
which we stratify by the rank vector of the extension:
\begin{equation}
    B_R(M;\ut M;\boldsymbol{r}) := \set*{
        L\in B_R(M;\ut M): \boldsymbol{\rk}(\tl RL/\ut M)=\boldsymbol{r}
    }.
\end{equation}
We can equivalently think of $B_R(M;\ut M)$ as the set of pairs $(\tl L,L)$ where $L\in E_R(\tl L;M)$ and $\tl L \supeq \ut M$.
\end{definition}
\begin{remark}\label{rmk:boundary-locus-bounded}
    The boundary locus is ``bounded": for all $L\in B_R(M;\ut M)$, by \Cref{lem:bounded}, we have $\fc \ut M\subeq L\subeq \tl RM$.
\end{remark}

With this definition, we can state and prove the main theorem of this section.

\begin{theorem}\label{thm:rationality-arithmetic}
    Let $R$ be an arithmetic local order, and $M$ be an $R$-lattice in $K^d$. Let $\ut M$ be any $\tl R$-lattice contained in $M$. The normalized lattice zeta function is given by the finite sum
    \begin{equation}\label{eq:rational-formula-arithmetic}
        \nu_M^R(s) :=\frac{\zeta_M^R(s)}{\zeta_{\tl R^d}^{\tl R}(s)} = \sum_{\boldsymbol{r}} \left( \prod_{i=1}^{b(R)} (\abs{\kappa_i}^{-s};\abs{\kappa_i})_{r_i} \right) \left( \sum_{L\in B_R(M;\ut M;\boldsymbol{r})} (M:L)^{-s} \right),
    \end{equation}
    which is a Dirichlet polynomial in $s$.
\end{theorem}
\begin{proof}
    The set of all $R$-sublattices of $M$, denoted $\Quot(M)$, can be stratified by the rank vector $\boldsymbol{r} = \boldsymbol{\rk}((\tl R N + \ut M)/\ut M)$. Let $\Quot(M;\boldsymbol{r})$ be the set of sublattices in a given stratum.

    The proof proceeds by establishing a bijection between the lattices in $\Quot(M;\boldsymbol{r})$ and a more structured set of combinatorial data. For a given $N \in \Quot(M;\boldsymbol{r})$, we define $\tl N = \tl R N$ and $\tl L = \tl N + \ut M$. The padding lemma (\Cref{lem:padding}) provides a bijection between $E_R(\tl N;M)$ and $E_R(\tl L;M)$. Let $L\in E_R(\tl L;M)$ be the lattice corresponding to $N$ under this bijection. This shows that the set $\Quot(M;\boldsymbol{r})$ is in bijection with the set of triples $(\tl L,L,\tl N)$, where $(\tl L,L)\in B_R(M;\ut M;\boldsymbol{r})$ and $\tl N\in \Pd(\tl L;\ut M)$. Under this bijection, the colengths are related by
    \begin{equation*}
        [M:N]=[M:\tl L]+[\tl L:\tl N]+[\tl N:N]=[M:L]+[\tl L:\tl N].
    \end{equation*}
    
    This allows us to re-write the sum over all lattices $N$ in the stratum $\Quot(M;\boldsymbol{r})$ as a sum over the boundary locus $B_R(M;\ut M;\boldsymbol{r})$ multiplied by the generating function for the padding fiber (\Cref{prop:padding-fiber-gen-arithmetic}). This yields:
    \begin{equation*}
        \sum_{N \in \Quot(M;\boldsymbol{r})} (M:N)^{-s} = \left(\prod_{i=1}^{b(R)} \frac{1}{(\abs{\kappa_i}^{r_i-s};\abs{\kappa_i})_{d-r_i}}\right) \left(\sum_{L \in B_R(M;\ut M;\boldsymbol{r})} (M:L)^{-s}\right).
    \end{equation*}
    Multiplying by the normalization factor $\zeta_{\tl R^d}^{\tl R}(s)^{-1} = \prod (\abs{\kappa_i}^{-s};\abs{\kappa_i})_d$ and using the elementary identity $(a;q)_d / (aq^{r};q)_{d-r} = (a;q)_r$ gives the formula for the contribution of the $\boldsymbol{r}$-stratum to $\nu_M^R(s)$. Summing over all rank vectors $\boldsymbol{r}\in \set{0,\dots,d}^{b(R)}$ gives the final expression. The result is a Dirichlet polynomial by \Cref{rmk:boundary-locus-bounded}.
\end{proof}

As an immediate corollary of this formula, we can evaluate the normalized zeta function at $s=0$.

\begin{corollary}\label{cor:central-arithmetic}
    Let $R$ be an arithmetic local order, and let $M$ be an $R$-lattice in $K^d$. Then
    \begin{equation}\label{eq:central-arithmetic}
       \nu^R_M(0)=\abs{E_R(\tl R^d)}.
    \end{equation}
    In particular, it is independent of $M$.
\end{corollary}
\begin{proof}
    We evaluate the formula in \Cref{thm:rationality-arithmetic} at $s=0$ with an arbitrary choice of $\ut M$. The $q$-Pochhammer symbol $(1;q)_{r}$ is zero unless $r=0$. Therefore, the only term in the sum over $\boldsymbol{r}$ that survives is the one corresponding to the zero vector, $\boldsymbol{r}=\boldsymbol{0}$. The boundary locus $B_R(M;\ut M;\boldsymbol{0})$ simply consists of lattices $L\in E_R(\ut M)$. The sum thus becomes $\abs{E_R(\ut M)}=\abs{E_R(\tl R^d)}$.
\end{proof}

\begin{remark}
    The $d=1$ case of \Cref{cor:central-arithmetic} was proven by Yun \cite[Theorem 2.5(2)]{yun2013orbital}.
\end{remark}

\section{Rationality: motivic}\label{sec:motivic}
We now upgrade the results of the previous section to the motivic setting, proving the rationality of the motivic lattice zeta function for geometric local orders. For a geometric local order $R/k$, we define a further stratification of $B_R(M;\ut M;\boldsymbol{r})$ as follows.

\begin{definition}
Let $M$ be an $R$-lattice and $\ut M$ be an $\tl R$-lattice contained in $M$. For a rank vector $\boldsymbol{r}$ and a colength $n$, define 
\begin{equation}
    B_R(M;\ut M;\boldsymbol{r};n) := \set*{
        L\in \Quot_n(M): \tl RL\supeq \ut M, \; \boldsymbol{\rk}(\tl RL/\ut M)=\boldsymbol{r}
    }.
\end{equation}
We can think of $B_R(M;\ut M;\boldsymbol{r};n)$ as the set of pairs $(\tl L,L)$ by setting $\tl L=\tl RL$.
\end{definition}

The Artinian reduction principle (\Cref{rmk:quot-k-scheme-struct}) realizes $B_R(M;\ut M;\boldsymbol{r};n)$ as a locally closed subset in a projective space. Specifically, the stratum of $B_R(M;\ut M;\boldsymbol{r};n)$ corresponding to a fixed $i=[\tl RM:\tl L]$ and $j=[\tl RM:L]$ lives in the product $\Quot_i^{\tl R}(\tl RM)\times \Quot_j^{R}(\tl M)$, each factor being a closed subset in the Grassmannian parametrizing $k$-subspaces of $\tl RM/\m^j \tl RM$. The conditions that cut out $B_R(M;\ut M;\boldsymbol{r};n)$ can be expressed as a sequence of closed and open conditions on a product of Grassmannians. Specifically, the conditions $\tl L\supeq \ut M$, $L\subeq \tl L$, and $L\subeq M$ are closed conditions, and subject to these, the condition $\boldsymbol{\rk}(\tl L/\ut M)=\boldsymbol{r}$ is a determinantal locally closed condition and the condition $\tl R L = \tl L$ is an open condition. Thus, $B_R(M;\ut M;\boldsymbol{r};n)$ can be equipped with the structure of a locally closed subscheme of a product of projective spaces. Since we only care about its motive, we do not distinguish this subscheme with its reduced structure. In the rest of this section, we will describe a scheme morphism through its underlying map of $k$-points. The validity of any such morphism we will describe this way can be easily verified using an explicit embedding of the source and the target into a product of Grassmannians (and further to a product of projective spaces via the Pl\"ucker embedding). 

\begin{theorem}\label{thm:rationality-motivic}
    Let $R$ be a geometric local order over an algebraically closed field $k$, and $M$ be an $R$-lattice in $K^d$. Let $\ut M$ be any $\tl R$-lattice contained in $M$. Then the normalized motivic lattice zeta function of $M$ is given by
    \begin{equation}\label{eq:rational-formula-motivic}
        N_M^R(t):=\frac{Z_M^R(t)}{Z_{\tl R^d}^{\tl R}(t)} = \sum_{\boldsymbol{r}} \left( \prod_{i=1}^{b(R)} (t;\L)_{r_i}\right) \left( \sum_n [B_R(M;\ut M;\boldsymbol{r};n)] \, t^n \right),
    \end{equation}
    where the right-hand side is a polynomial in $t$. 
\end{theorem}

While the proof of \Cref{thm:rationality-arithmetic} was based on explicit parametrization—a method suitable for a motivic upgrade—some parts of the parametrization, namely the choice of $h$ in \Cref{lem:padding} and the decomposition in \Cref{lem:padding_fiber}, were non-canonical. We will handle this issue using the Schubert cell decomposition of the affine Grassmannian.

Before we proceed to the proof, we give two corollaries of \Cref{thm:rationality-motivic}.

\begin{corollary}\label{prop:rtilde-motivic}
    If $R/k$ is a geometric local order, then for any $\tl R$-lattice $\tl L$ of rank $d$, we have
    \begin{equation}
        N^{R}_{\tl L}(t) = \sum_{n=0}^{\infty} \bracks{E_{R}(\tl R^d;n)}\, t^n,
    \end{equation}
    a polynomial in $t$.
\end{corollary}
\begin{proof}
    Take $M=\ut M=\tl L$ in \Cref{thm:rationality-motivic}, then the only nonvanishing summand is $\boldsymbol{r}=\boldsymbol{0}$. 
\end{proof}

\begin{corollary}\label{cor:central-motivic}
    For any $R$-lattice $M$ of rank $d$,
    \begin{equation}
        N^R_M(1)=[E_R(\tl R^d)]=N^R_{\tl R^d}(1).
    \end{equation}
\end{corollary}
\begin{proof}
    This follows from the same argument as in \Cref{cor:central-arithmetic}, except we take the motivic \Cref{thm:rationality-motivic} as input. 
\end{proof}

\subsection{Affine Grassmannian and its stratification}\label{subsec:schubert}
Let $G=\GL_d(K)$ and $P=\GL_d(\tl R)$, and define the \textbf{affine Grassmannian} $X=\Gr_{\tl R}(K^d):=G/P$. It parametrizes $\tl R$-lattices in $K^d$; an element $x P$ corresponds to the lattice $x\tl R^d$. The affine Grassmannian is an ind-scheme; its truncation $\Gr_{\tl R}(\tl M,\tl L):=\set{\tl N:\tl L\subeq \tl N\subeq_{\tl R}\tl M}$ parametrizing $\tl R$-lattices between $\tl L$ and $\tl M$ is a projective scheme (cf.~\Cref{lem:quot-k-scheme-struct}). 

We describe the standard stratification of $X$ into Schubert cells, adapted into an explicit language from our work \cite{huangjiang2022a}; for a standard reference, see \cite{Zhu_AffineGrass}. For now, assume $\tl R$ has one branch, namely, $\tl R=k[[T]]$. A \textbf{(single-branch) signature} is a $d$-vector $\mu=(\mu_1,\dots,\mu_d)\in \Z^d$. 
A signature is \textbf{dominant} if $\mu_1\geq \dots \geq \mu_d$, and \textbf{nonnegative} (written $\mu\geq 0$) if $\mu_i\geq 0$ for all $i$. The \textbf{(opposite) Iwahori subgroup} $I$ is the subgroup of $P$ consisting of matrices that are lower triangular modulo $T$. The diagonal matrix with signature $\mu$ is
\begin{equation}
    D_\mu = \mathrm{diag}(T^{\mu_1},\dots,T^{\mu_d})\in G.
\end{equation}
The \textbf{Schubert cell} of signature $\mu$ is the stratum $X_\mu^\circ$ in the Bruhat decomposition:
\begin{equation}
    X=\bigsqcup_{\mu\in \Z^d} X_\mu^\circ := \bigsqcup_{\mu\in \Z^d} I D_\mu P/P.
\end{equation}
The stratum $X_\mu^\circ$ is an affine space over $k$. To give an explicit parametrization, define the \textbf{Iwahori cell} of signature $\mu$ as the set of matrices
\begin{equation}
    I_\mu := \set*{[\iota_{ij}]_{i,j=1}^d: \mat{ \iota_{ii}=1 \\
     \iota_{ij}\in \Span_k \set{T^a: 0\leq a < \mu_i-\mu_j}, \text{ if $i>j$}\\
     \iota_{ij}\in \Span_k \set{T^a: 0< a < \mu_i-\mu_j}, \text{ if $i<j$}
     }}\subeq I. 
\end{equation}
It is manifestly an affine space; its dimension, denoted by $\ell(\mu)$, can be computed by counting the number of free coefficients in the matrices in $I_\mu$. A formula for $\ell(\mu)$ given in \cite[Eq.~(2.15)]{huangjiang2022a} is
\begin{equation}
    \ell(\mu)=\dim I_\mu = \dim X_\mu^\circ = \sum_{i,j=1}^d \max\set*{0, \mu_j - \mu_i + \floor*{\frac{j- i}{d}}}.
\end{equation}
We note that the set $I_\mu D_\mu$ is precisely the set of matrices in the ``hlex normal form'' described in \cite[Section 6]{huangjiang2022a}: 
\begin{equation}\label{eq:normal-form}
    I_\mu D_\mu = \set*{[x_{ij}]_{i,j=1}^d: \mat{ x_{ii}=T^{\mu_i} \\
     x_{ij}\in \Span_k \set{T^a: \mu_j\leq a < \mu_i}, \text{ if $i>j$}\\
     x_{ij}\in \Span_k \set{T^a: \mu_j< a < \mu_i}, \text{ if $i<j$}
     }}. 
\end{equation}
We have an isomorphism of schemes
\begin{equation}\label{eq:affineparameterization}
    I_\mu \to X_\mu^{\circ}, \quad \iota\mapsto \iota D_\mu \tl R^d,
\end{equation}
providing the affine parametrization.

If $\tl R$ has multiple branches, i.e., $\tl R=\prod_{i=1}^{b(R)} \tl R_i$ with $\tl R_i=k[[T_i]]$, then we have the product structure $G=\prod_{i=1}^{b(R)} G^{(i)}$, where $G^{(i)}=\GL_d(K_i)$ is the corresponding group for the $i$-th branch. Similarly, $P=\prod_{i=1}^{b(R)} P^{(i)}$, $X=\prod_{i=1}^{b(R)} X^{(i)}$, and $I=\prod_{i=1}^{b(R)} I^{(i)}$, where $P^{(i)}, X^{(i)}$ and $I^{(i)}$ are the corresponding objects for the $i$-th branch. A \textbf{signature} for $X$ is a $b(R)$-tuple of $d$-vectors
\begin{equation}
    \boldsymbol{\mu}=(\mu^{(i)})_{i=1}^{b(R)}\in (\Z^d)^{b(R)}, \quad \mu^{(i)}=(\mu^{(i)}_j)_{j=1}^d \in \Z^d.
\end{equation}
As usual, a signature $\boldsymbol{\mu}=(\mu^{(i)})_i$ is dominant (\resp nonnegative) if every $\mu^{(i)}$ is; the diagonal matrix with signature $\boldsymbol{\mu}$ is $D_{\boldsymbol{\mu}}=\sum_{i=1}^{b(R)} \mathrm{diag}(T_i^{\mu^{(i)}_1},\dots, T_i^{\mu^{(i)}_d})$; the Schubert cell and the Iwahori cell of signature of $\boldsymbol{\mu}$ are $X_{\boldsymbol{\mu}}=\prod_{i=1}^{b(R)} X_{\mu^{(i)}}$ and $I_{\boldsymbol{\mu}}=\prod_{i=1}^{b(R)} I_{\mu^{(i)}}$. The dimension of both $X_{\boldsymbol{\mu}}$ and $I_{\boldsymbol{\mu}}$ is $\ell(\boldsymbol{\mu}):=\sum_{i=1}^{b(R)} \ell(\mu^{(i)})$. In addition, the \textbf{size} of a signature $\boldsymbol{\mu}$ is defined as
\begin{equation}
    \abs{\boldsymbol{\mu}}:=\sum_{i=1}^{b(R)} \abs{\mu^{(i)}}:=\sum_{i=1}^{b(R)} \sum_{j=1}^d \mu^{(i)}_j. 
\end{equation}
If $\tl L\in X_{\boldsymbol{\mu}}^\circ$, then $[\tl R^d:\tl L]=\abs{\boldsymbol{\mu}}$.

For $\tl L\in X$, we say the signature of $\tl L$ is the unique $\boldsymbol{\mu}_{\tl L}\in \Z^d$ such that $\tl L\in X_{{\boldsymbol{\mu}}_{\tl L}}^\circ$. The \textbf{Iwahori element} of $\tl L$, denoted by $\iota_{\tl L}$, is the unique element in $I_{\boldsymbol{\mu}}$ such that $\iota_{\tl L} D_{{\boldsymbol{\mu}}_{\tl L}} \tl R^d = \tl L$. The \textbf{normal form} of $\tl L$ is the matrix $x_{\tl L}:= \iota_{\tl L} D_{{\boldsymbol{\mu}}_{\tl L}}$; we have $\tl L=x_{\tl L}\tl R^d$. If $\tl L$ has signature $\boldsymbol{\mu}$, then $\tl L\subeq \tl R^d$ if and only if $\boldsymbol{\mu}\geq 0$, and $\tl L\supeq \tl R^d$ if and only if $-\boldsymbol{\mu}\geq 0$, where the \textbf{negation} $-\boldsymbol{\mu}$ is defined by entrywise negation. For $\tl L\subeq \tl R^d$, the type vector (see \Cref{subsec:dvr}) of $\tl R^d/\tl L$ is given by 
\begin{equation}
    \boldsymbol{type}(\tl R^d/\tl L)=\boldsymbol{\mu_{\tl L}}_{\geq},
\end{equation}
where $\boldsymbol{\mu}_\geq$ denotes the dominant signature associated with $\boldsymbol{\mu}$ defined by sorting the entries of each $\mu^{(i)}$.

\subsection{Padding fiber} Recall the padding fiber $\Pd(\tl L;\ut M)$ parametrizng $\tl R$-lattices $\tl N$ such that $\tl N+\ut M=\tl L$. The following lemma gives an affine cell decomposition of $\Pd(\tl L;\ut M)$, and gives a canonical candidate for the map $h$ in \Cref{lem:padding}. For two nonnegative signatures $\boldsymbol{\mu}, \boldsymbol{\nu}$, we say they are \textbf{orthogonal} to each other, denoted by $\boldsymbol{\mu}\perp \boldsymbol{\nu}$ if
\begin{equation}
    \min\set{\mu^{(i)}_j,\nu^{(i)}_j} = 0 \text{ for all }i,j.
\end{equation}

\begin{lemma}
    \label{lem:padding_fiber_schubert}
    Let $\boldsymbol{\mu}\in \Z^{d\times b(R)}$ be a nonnegative dominant signature, and let $\tl L=\tl R^d$ and $\ut M=D_{\boldsymbol{\mu}}\tl R^d$. Then
    \begin{enumerate}
        \item The padding fiber is a union of Schubert cells:
        \begin{equation}
            \set{\tl N: \tl N+\ut M=\tl L} = \bigsqcup_{\boldsymbol{\nu}\perp \boldsymbol{\mu}} X_{\boldsymbol{\nu}}^\circ.
        \end{equation}
        \item If $\boldsymbol{\nu}\perp \boldsymbol{\mu}$ and $\tl N\in X_{\boldsymbol{\nu}}^\circ$, then
        \begin{equation}
            (x_{\tl N}-\mathrm{id})\tl L\subeq \ut M.
        \end{equation}
    \end{enumerate}
\end{lemma}
\begin{proof}
    It suffices to prove the uni-branch case, so we write $\mu=\boldsymbol{\mu}$ and $\nu=\boldsymbol{\nu}$. Since $\mu$ is dominant, there is $0\leq r\leq d$ such that $\mu_1,\dots,\mu_r>0$ and $\mu_{r+1},\dots,\mu_d=0$. 

    \begin{enumerate}
        \item Let $\nu$ be any nonnegative signature and $\tl N\in X_\nu^\circ$. The lattice $\tl N+\ut M$ is the $\tl R$-column span of the matrix $B=\bmat{x_{\tl N} & D_\mu}$. By Nakayama's lemma, $\tl N+\ut M=\tl R^d$ if and only if $B$ is full rank modulo $T$. By inspecting \eqref{eq:normal-form}, $B$ modulo $T$ is of the form
        \begin{equation}
            \bmat{x_{\tl N} & D_\mu} \bmod T = \bmat{L_{r\times r} & 0 & 0_{r\times r} & 0 \\
            * & *_{(d-r)\times (d-r)} & 0 & 1_{(d-r)\times (d-r)}},
        \end{equation}
        where $L_{r\times r}$ is lower triangular. Moreover, the diagonal of $L_{r\times r}$ is all one if $\nu_1=\dots=\nu_r=0$, namely, $\nu\perp \mu$, while the diagonal of $L_{r\times r}$ contains zero if $\nu \not\perp\mu$. As a result, $B$ is full rank modulo $T$ if and only if $\nu \perp \mu$, proving the claim.
        \item Assume $\nu\perp \mu$ and $\tl N\in X_\nu^\circ$. By inspecting \eqref{eq:normal-form}, the top $r$ rows of $x_{\tl N}$ is the matrix $\bmat{1_{r\times r} & 0_{r\times (d-r)}}$, so the top $r$ rows of the matrix $x_{\tl N}-\mathrm{id}$ is zero. As a result, the $\tl R$-column span of $x_{\tl N}-\mathrm{id}$ is contained in $0^r\oplus \tl R^{d-r}$, which is contained in $\ut M$. \qedhere
    \end{enumerate}
\end{proof}

We now prove the motivic analogue of \Cref{prop:padding-fiber-gen-arithmetic}. 

\begin{corollary}\label{cor:padding-fiber-gen}
    Fix $\tl R$-lattices $\tl L\supeq \ut M$ such that $\boldsymbol{\rk}(\tl L/\ut M)=\boldsymbol{r}$. Then we have an identity in $\KVar{k}[[\boldsymbol{t}]]=\KVar{k}[[t_1,\dots,t_{b(R)}]]$:
    \begin{equation}\label{eq:padding-fiber-gen}
        \sum_{\boldsymbol{n}\in \N^{b(R)}} \bracks*{\set*{\tl N: \mat{\tl N+\ut M=\tl L\\ [[\tl L:\tl N]]=\boldsymbol{n}}}}\, \boldsymbol{t}^{\boldsymbol{n}} = \prod_{i=1}^{b(R)} \frac{1}{(\L^{r_i}t_i;\L)_{d-r_i}}.
    \end{equation}
    In particular, taking $t_i=t$, we get
    \begin{equation}
        \sum_{n} \bracks*{\set*{\tl N: \mat{\tl N+\ut M=\tl L\\ [\tl L:\tl N]=n}}}\, t^n = \prod_{i=1}^{b(R)} \frac{1}{(\L^{r_i}t;\L)_{d-r_i}}.
    \end{equation}
\end{corollary}
\begin{proof}
    By passing to isomorphic copies, we may assume $\tl L,\ut M$ are in the setting of \Cref{lem:padding_fiber_schubert}. Thus, each moduli space on the left-hand side of \eqref{eq:padding-fiber-gen} is a union of Schubert cells, so its motive is a polynomial of $\L$. Therefore, the point-counting formula (\Cref{prop:padding-fiber-gen-arithmetic}) directly upgrades to the motivic formula.
\end{proof}

\begin{remark}
    Instead of resorting to the point-count version, we can directly deal with the combinatorics of Schubert cells to find the exact formula, using the ``spiral shifing operators'' from \cite{huangjiang2022a}. Let $b(R)=1$ and assume the setting in the proof of \Cref{lem:padding_fiber_schubert}, so that $\mu_1,\dots,\mu_r>0$, $\mu_{r+1},\dots,\mu_d=0$. Note that $r=\rk(\tl L/\ut M)$, and $\nu\perp \mu$ if and only if $\nu_1=\dots=\nu_r=0$. Then \Cref{cor:padding-fiber-gen} reduces to the combinatorial identity
    \begin{equation}
        \sum_{\substack{\nu\in \N^d\\ \nu_1=\dots=\nu_r=0}} t^{\abs{\nu}} q^{\ell(\nu)} = \frac{1}{(tq^r;q)_{d-r}}.
    \end{equation}
    The left-hand side sums over the orbit of the zero signature under the spiral shifting operators $g_{r+1},\dots,g_d$ as in \cite{huangjiang2022a}. The ``content'' of the zero signature is $1$, and the content of $g_j$ is $tq^{j-1}$. By \cite[Theorem~5.4]{huangjiang2022a}, the left-hand side sum has the product formula
    \begin{equation}
        \Cont(0)\cdot \frac{1}{(1-\Cont(g_{r+1}))\cdots (1-\Cont(g_d))} = \frac{1}{(1-tq^r)\cdots (1-tq^{d-1})} = \frac{1}{(tq^r;q)_{d-r}},
    \end{equation}
    as required.
\end{remark}

\subsection{Proof of \Cref{thm:rationality-motivic}} Let an $R$-lattice $M$ and an $\tl R$-lattice $\ut M\subeq M$ be given. By passing to isomorphic copies, we may assume $\ut M=\tl R^d$. We start with defining a combinatorial stratification of $\Quot^R(M)$, whose purpose will be clear later. For every nonnegative signature $\boldsymbol{\mu}\in \N^{d\times b(R)}$, fix any permutation matrix\footnote{A permutation matrix in $\GL_d(\tl R)$ is a matrix whose $i$-th branch is a permutation matrix in $\GL_d(\tl R_i)$.} $\sigma_{\boldsymbol{\mu}}$ such that $\sigma_{\boldsymbol{\mu}} D_{\boldsymbol{\mu}} \sigma_{\boldsymbol{\mu}}^{-1}= D_{{\boldsymbol{\mu}}_\geq}$, so that $\sigma_{\boldsymbol{\mu}} D_{\boldsymbol{\mu}} \tl R^d= D_{{\boldsymbol{\mu}}_\geq} \tl R^d$. Define the index set for our stratification:
\begin{equation}
    Y_n:=\set*{\tau=(\boldsymbol{\lambda},\boldsymbol{\mu},\boldsymbol{\nu},n):\mat{\boldsymbol{\lambda},\boldsymbol{\mu},\boldsymbol{\nu}\in \N^{d\times b(R)}\\ \boldsymbol{\lambda}_{\geq}=\boldsymbol{\mu}_{\geq}\\ \boldsymbol{\nu}\perp \boldsymbol{\mu}_\geq}},
\end{equation}
and $Y=\bigsqcup_{n\in \N} Y_n$. Given $\tau\in Y_n$, we define the stratum $\Quot^R(M;\tau)\subeq \Quot^R_n(M)$ to consist of $R$-lattices $N\subeq M$ that are of type $\tau$, whose meaning is detailed below.
\begin{definition}\label{def:stratum-Y}
    For $N\subeq_R M$, let $\tl N=\tl RN$ and $\tl L=\ut M+\tl N$. We say the lattice $N$ is of \textbf{type} $\tau$ if the followings hold:
    \begin{itemize}
        \item $\tl L\in X_{-\boldsymbol{\lambda}}^{\circ}$.
        \item Let $\ut M':=D_{-\boldsymbol{\lambda}}^{-1}\iota_{\tl L}^{-1} \tl R^d$. Then $\ut M'\in X_{\boldsymbol{\mu}}^\circ$.
        \item Let $\tl N'':=\sigma_{\boldsymbol{\mu}} \iota_{\ut M'}^{-1} D_{-\boldsymbol{\lambda}}^{-1} \iota_{\tl L}^{-1} \tl N$. Then $\tl N''\in X_{\boldsymbol{\nu}}^\circ$. 
        \item $[\tl N:N]=n$.
    \end{itemize}
\end{definition}
The definition of $\ut M'$ and $\tl N''$ can be easily extracted from the following diagram:
\begin{equation}\label{diag:iso}
\begin{tikzcd}[column sep=large, row sep=large]
    \tl N \arrow[r, "\simeq"] \arrow[d, hook] &\tl N' \arrow[r, "\simeq"] \arrow[d, hook] &\tl N''\arrow[d, hook]\\
    \tl L \arrow[r, "{D_{-\boldsymbol{\lambda}}^{-1}\iota_{\tl L}^{-1}}", "{\simeq}"'] & 
    \tl R^d \arrow[r, "{\sigma_{\boldsymbol{\mu}} \iota_{\ut M'}^{-1}}", "{\simeq}"'] & 
    \tl R^d \\
    \tl R^d \arrow[r, "\simeq"] \arrow[u, hook] & 
    \ut M'  \arrow[r, "\simeq"] \arrow[u, hook]& 
    D_{\boldsymbol{\mu}_\geq} \tl R^d \arrow[u, hook]
\end{tikzcd}
\end{equation}

It is clear that $\Quot^R(M;\tau)$ is a locally closed subset of $\Quot^R_n(M)$, and $\Quot^R_n(M)=\bigsqcup_{\tau \in Y_n} \Quot^R(M;\tau)$. For the latter assertion, we remark that the property $\boldsymbol{\lambda}_\geq = \boldsymbol{\mu}_\geq$ is guaranteed by the $\tl R$-module isomorphism $\tl L/\tl R^d \simeq \tl R^d / \ut M'$. 

For $N\in \Quot^R(M)$, define also
\begin{itemize}
    \item $M'':=\sigma_{\boldsymbol{\mu}} \iota_{\ut M'}^{-1} D_{-\boldsymbol{\lambda}}^{-1} \iota_{\tl L}^{-1} M.$
    \item $N''=\sigma_{\boldsymbol{\mu}} \iota_{\ut M'}^{-1} D_{-\boldsymbol{\lambda}}^{-1} \iota_{\tl L}^{-1} N.$
\end{itemize}
Due to the explicit nature of the expressions, all objects mentioned so far depend regularly on $N$ as long as $N$ varies over a single stratum $\Quot^R(M;\tau)$.

Recall also the moduli space
\begin{equation}
    B_R(M;\ut M):=\set{L\subeq_R M: \tl RL\supeq \ut M}.
\end{equation}
Consider the index set
\begin{equation}
    Z_n:=\set*{\rho=(\boldsymbol{\lambda},\boldsymbol{\mu},n):\mat{\boldsymbol{\lambda},\boldsymbol{\mu},\boldsymbol{\nu}\in \N^{d\times b(R)}\\ \boldsymbol{\lambda}_{\geq}=\boldsymbol{\mu}_{\geq}}},
\end{equation}
and $Z=\bigsqcup_{n\in \N} Z_n$. Given $\rho\in Z$, define the stratum
\begin{equation}
    B_R(M;\ut M;\rho):=\set*{
        L\subeq_R M: \mat{
            \tl L\in X_{-\boldsymbol{\lambda}}^\circ \\ 
            \ut M' \in X_{\boldsymbol{\mu}}\\ [\tl L:L]=n}
        },
\end{equation} 
where $\tl L:=\tl R L$ and $\ut M':=D_{-\boldsymbol{\lambda}}^{-1}\iota_{\tl L}^{-1} \tl R^d$. Again, we have $B_R(M;\ut M)=\bigsqcup_{\rho\in Z} B_R(M;\ut M;\rho)$. 

\begin{lemma}\label{lem:key-decomposition}
    For any $\tau=(\boldsymbol{\lambda},\boldsymbol{\mu},\boldsymbol{\nu},n) \in Y$, let $\rho = (\boldsymbol{\lambda},\boldsymbol{\mu},n)$, then we have an isomorphism of varieties
    \begin{equation}
        \Quot^R(M;\tau)\simeq B_R(M;\ut M;\rho) \times X_{\boldsymbol{\nu}}^\circ.
    \end{equation}
\end{lemma}
\begin{proof}
    The key is that \Cref{lem:padding_fiber_schubert}(b) provides the necessary ingredient to be substituted into the $h$ in \Cref{lem:padding}(b). Starting with it and carefully chasing the diagram \eqref{diag:iso}, we get the required isomorphism:
    \begin{equation}
        N \mapsto (\iota_{\tl L} D_{-\boldsymbol{\lambda}} \iota_{\ut M'} \sigma_{\boldsymbol{\mu}}^{-1} x_{\tl N''}^{-1} N'',\tl N''),
    \end{equation}
    where $\tl L, \ut M', \tl N'', N''$ depend on $N$ according to rules defined before. The inverse map is
    \begin{equation}
        \iota_{\tl L} D_{-\boldsymbol{\lambda}} \iota_{\ut M'} \sigma_{\boldsymbol{\mu}}^{-1} x_{\tl N''} L'' \mapsfrom (L,\tl N''),
    \end{equation}
    where $L'':=\sigma_{\boldsymbol{\mu}} \iota_{\ut M'}^{-1} D_{-\boldsymbol{\lambda}}^{-1} \iota_{\tl L}^{-1} L$.
\end{proof}

We are now ready to prove \Cref{thm:rationality-motivic}. Below, we present a refined version of \Cref{thm:rationality-motivic} that captures the full strength of our approach. For a nonnegative signature $\boldsymbol{\lambda}\in \N^{d\times b(R)}$, let $r_i(\boldsymbol{\lambda})$ denote the number of nonzero entries in $\lambda^{(i)}$.
\begin{theorem}\label{thm:rationality-motivic-fine}
    We have the following identity in 
    \begin{equation}
    \KVar{k}[[\boldsymbol{u},\boldsymbol{v},t]]=\KVar{k}[[u^{(i)}_j, v_i,t:1\leq i\leq b(R),1\leq j\leq d]]:
    \end{equation}
    \begin{equation}\label{eq:rationality-refined}
        \begin{multlined}
            \prod_{i=1}^{b(R)} (v_i;\L)_d \times \sum_{\boldsymbol{m},\boldsymbol{n},n} \bracks*{\set*{N\subeq_R M: \mat{\boldsymbol{type}(\tl L/\ut M)=\boldsymbol{m} \\
        [[\tl L:\tl N]]=\boldsymbol{n}\\
        [\tl N:N]=n\\
        }}}\, \boldsymbol{u}^{\boldsymbol{m}} \boldsymbol{v}^{\boldsymbol{n}} t^n \\
        = \sum_{\rho=(\boldsymbol{\lambda},\boldsymbol{\mu},n)\in Z} [B_R(M;\ut M;\rho)] \boldsymbol{u}^{\boldsymbol{\mu}_\geq} t^n \prod_{i=1}^{b(R)}(v_i;\L)_{r_i(\boldsymbol{\mu}_\geq)},
        \end{multlined}
    \end{equation}
    where $\tl N:=\tl RN$ and $\tl L:=\tl N+\ut M$. Moreover, the right-hand side is a polynomial.
\end{theorem}
\begin{proof}
    For $\boldsymbol{\nu}=(\nu^{(i)})_{i=1}^{b(R)}\in (\N^d)^{b(R)}$, denote $\boldsymbol{n}(\boldsymbol{\nu})=(\abs{\nu^{(i)}})_{i=1}^{b(R)}$. Then the sum in the left-hand side of \eqref{eq:rationality-refined} equals
    \begin{equation}
        \sum_{\tau=(\boldsymbol{\lambda},\boldsymbol{\mu},\boldsymbol{\nu},n)\in Y} [\Quot^R(M;\tau)]\, \boldsymbol{u}^{\boldsymbol{\mu}_\geq} \boldsymbol{v}^{\boldsymbol{n}(\boldsymbol{\nu})} t^n.
    \end{equation}
    By \Cref{lem:key-decomposition} and \Cref{cor:padding-fiber-gen}, the left-sum sum of \eqref{eq:rationality-refined} further becomes
    \begin{equation}
        \sum_{\rho=(\boldsymbol{\lambda},\boldsymbol{\mu},n)\in Z} [B_R(M;\ut M;\rho)] \frac{1}{\prod_{i=1}^{b(R)}(\L^{r_i(\boldsymbol{\mu}_\geq)}v_i;\L)_{d-r_i(\boldsymbol{\mu}_\geq)}} \boldsymbol{u}^{\boldsymbol{\mu}_\geq} t^n.
    \end{equation}
    Simplifying yields the required identity. Finally, the right-hand side of \eqref{eq:rationality-refined} is a polynomial because among the summands in which $B_R(M;\ut M;\rho)$ is nonempty, $n$ is bounded by \Cref{lem:bounded} and $\abs{\boldsymbol{\lambda}}=\abs{\boldsymbol{\mu}}$ are bounded by the fact that $\ut M\subeq \tl L \subeq \tl RM$.
\end{proof}

\begin{proof}
    [Proof of \Cref{thm:rationality-motivic}]
    Note that $[M:N]=[M:\ut M]-[\tl L:\ut M]+[\tl L:\tl N]+[\tl N:N]$. As a result, taking $u^{(i)}_j=t^{-1}$ and $v_i=t$ in \Cref{eq:rationality-refined} and multiply both sides by $t^{[M:\ut M]}$, we get
    \begin{equation}
        \prod_{i=1}^{b(R)} (t;\L)_d \times Z_M(t) = \sum_{\rho=(\boldsymbol{\lambda},\boldsymbol{\mu},n)\in Y} [B_R(M;\ut M;\rho)] t^{n-\abs{\boldsymbol{\mu}_\geq}+[M:\ut M]}\prod_{i=1}^{b(R)}(t;\L)_{r_i(\boldsymbol{\mu}_\geq)}.
    \end{equation}
    By organizing the summands on the right-hand side according to $\boldsymbol{r}(\boldsymbol{\mu}_\geq)$, and noting that $n-\abs{\boldsymbol{\mu}}+[M:\ut M]=[M:L]$ for $L\in B_R(M;\ut M;\rho)$, the right-hand side equals the right-hand side of \eqref{eq:rational-formula-motivic}.
\end{proof}

\section{Duality and reflection principle}\label{sec:func-eq}

In this section, we prove that certain lattice zeta functions satisfy a functional equation, generalizing a result of Yun \cite{yun2013orbital} from rank $1$ to arbitrary rank $d$. Our approach is inspired by the work of Bushnell and Reiner \cite{bushnellreiner1980zeta}, and we pay special attention to the subtleties arising from matrix manipulations and transposition in higher rank. 

Let $R$ be an arithmetic local order, and let $\Omega_R$ denote its dualizing module, which is uniquely defined up to isomorphism \cite[Corollary 21.14]{eisenbudcommutative}. For any $R$-lattice $M$ of rank $d$, we define the \textbf{normalized lattice zeta function} by
\begin{equation}\label{eq:def-normalized-lattice}
    \nu_M(s)=\nu_M^R(s) := \zeta_M^R(s) / \zeta_{\tl R^d}^{\tl R}(s) = \zeta_M^R(s) \prod_{i=1}^{b(R)} (q_i^{-s};q_i)_d,
\end{equation}
which is a Dirichlet polynomial by \Cref{thm:rationality-arithmetic}.

\begin{theorem}[Reflection principle]\label{thm:reflection-local}
    Let $R$ be an arithmetic local order, and let $E=\Omega_R^d$ be the direct sum of $d$ copies of its dualizing module. Let $\Delta = \abs{\tl R/R}$ be the Serre invariant. Then as Dirichlet polynomials, we have
    \begin{equation}
        \nu_E^R(s) = \Delta^{d^2-2ds} \nu_E^R(d-s).
    \end{equation}
\end{theorem}

\subsection{Trace pairing and dual lattice}
As the first step of the proof, we note that any arithmetic local order fits into the setting of \cite[\S 2]{yun2013orbital} in the following sense.
\begin{lemma}\label{lem:z}
    Given an arithmetic local order $R$, there exists a subring $Z\subeq R$ such that
    \begin{enumerate}
        \item $Z$ is a DVR;
        \item $R$ is a module over $Z$ of finite rank;
        \item $K=\Frac(R)$ is a separable algebra over $Q:=\Frac(Z)$.
    \end{enumerate}
\end{lemma}
\begin{proof}
    If $R$ is of characteristic zero, then we simply take $Z=\Zp$. If $R$ is of characteristic $p>0$, then for $1\leq i\leq b(R)$, by Cohen structure theorem, $\tl R_i\simeq \F_{q_i}[[T_i]]$, where $q_i$ are powers of $p$. By \Cref{lem:conductor-finite}, for $c\gg 0$, $T^c:=\sum_{i=1}^{b(R)} T_i^c$ lies in $R$; in particular, we may choose such $c$ that is not divisible by $p$. Take $Z=\Fp[[T^c]]$, then $Z\subeq R$. The algebra $K/Q$ is a direct product of field extensions $Q\incl K_i$, which can be identified with $\Fp((T^c))\incl \F_{q_i}((T))$. This is a finite separable field extension when $p\nmid c$, finishing the proof.
\end{proof}

Throughout the rest of this section, we will fix $Z$ (and thus $Q$) as above, even though the statement of \Cref{thm:reflection-local} does not depend on the choice of $Z$. Let us recall the notion of trace pairing on $K$ following \cite[\S 2]{yun2013orbital}, which will depend on $Q$. Consider the trace map $\tr_{K/Q}:K\to Q$ that defines $\tr_{K/Q}(\alpha)$ to be the trace of the multiplication-by-$\alpha$ map on $K$, viewed as a $Q$-linear endomorphism. Let $D$ be a generator for the different ideal of $K/Q$ as an ideal of $\tl R$. Define the (modified) trace pairing $K\times K\to Q$ by $(x,y)\mapsto \tr_{K/Q}(D^{-1}xy)$; it is nondegenerate because $K$ is a separable algebra over $Q$. For any \textbf{fractional ideal} $I$ of $R$ (namely, a rank-one $R$-lattice in $K$), define $I^{\vee}=\Hom_Z(I,Z)$ and view it as a fractional ideal of $R$ by 
\begin{equation}
    I^{\vee}:=\set{x\in K: \tr_{K/Q}(D^{-1}xI)\subeq Z}.
\end{equation}
The choice of $D$ ensures that $(\tl R)^\vee=\tl R$ as fractional ideals. 

For any $R$-lattice $L$ in a finite-rank free $K$-module $V$, consider the $L^{\vee}:=\Hom_Z(L,Z)$ with an $R$-module structure induced from the one on $L$. By the general tensor-hom duality, we have a canonical isomorphism of $R$-modules $L^{\vee}\simeq \Hom_R(L,R^\vee)$. We view $L^\vee$ as an $R$-lattice in $\Hom_K(V,K)$ using the natural inclusion $\Hom_R(L,R^\vee)\to \Hom_K(V,K)$. As a subset of $\Hom_K(V,K)$, we have $L^\vee=\set{\theta\in \Hom_K(V,K): \theta(L)\subeq R^\vee}$. We have the double dual property $(L^\vee)^\vee=L$ as lattices in $K^d$ because any $R$-lattice is free of finite rank over $Z$, and $\Hom_Z(\cdot,Z)$ clearly has this property on free $Z$-modules of finite rank. For this reason, $R^\vee$ is a dualizing module of $R$, and thus $\Omega_R\simeq R^\vee$. From now on, we will identify $\Omega_R$ with $R^\vee$ as a fractional ideal.

For two $R$-lattices $L_1\supeq L_2$ of $V$, we define $(L_1:L_2)=\abs{L_1/L_2}$. This notion is extended uniquely to any pair of $R$-lattices in $V$ by the property $(L_1:L_2)(L_2:L_3)(L_3:L_1)=1$, cf.~\Cref{lem:colength}. We call $(L_1:L_2)$ the \textbf{index} of $L_2$ in $L_1$, even when $L_1$ does not contain $L_2$. 

For two $R$-lattices $L, M$ in free $K$-modules $V,W$, respectively, we view $\Hom(L,M)$ as a subset of $\Hom_K(V,W)$ by
\begin{equation}
    \Hom(L,M)=\set{x\in \Hom_K(V,W):xL\subeq M}.
\end{equation}
It will be important to keep track of different copies of free $K$-modules. To this end, we use the standard notation $K^{d\times d}=\Mat_d(K)$, $K^{d\times 1}$ for the space of column vectors, $K^{1\times d}$ for the space of row vectors. We canonically have $\Hom_K(K^{d\times 1},K)=K^{1\times d}$, so if $L\subeq K^{d\times 1}$ is an $R$-lattice, then canonically $L^\vee \subeq K^{1\times d}$. Let $(\cdot)^T$ denote the transpose operator.

\subsection{The proof via harmonic analysis}
The proof of \Cref{thm:reflection-local} requires refining the zeta function to sum over individual isomorphism classes, and then re-interpreting this sum as a zeta integral.

\begin{definition}
    Let $L,M$ be two torsion-free $R$-modules of rank $d$. The \textbf{restricted lattice zeta function} is
    \begin{equation}
        \zeta_M(s;L)=\zeta_M^R(s;L):=\sum_{\substack{N\subeq_R M\\ N\simeq_R L}} (M:N)^{-s}.
    \end{equation}
    The lattice zeta function is thus the sum over all isomorphism classes: $\zeta_M(s) = \sum_{[L]} \zeta_M(s;L)$.
\end{definition}

To express this as an integral, we introduce the standard notions from harmonic analysis. Let $A:=\Mat_d(K)$, so $A^\times = \GL_d(K)$. Let $\mu=\mathrm{d}x$ be the additive Haar measure on $K$ normalized so that $\mu(\tl R)=1$. The measure extends to $K^{d\times 1}$, $K^{1\times d}$, and $A=K^{d\times d}$ as product measures, so in particular $\mu(\Mat_d(\tl R))=1$. Let $d^\times x$ be the multiplicative Haar measure on $A^\times$ normalized so that $\mu^\times(\GL_d(\tl R)) = \prod_{i=1}^{b(R)} (q_i^{-1};q_i^{-1})_d^{-1}$. For $x\in A^\times$, the norm is defined by $\norm{x}_A := (\mathcal{N}:x\mathcal{N})^{-1}$ for any $R$-lattice $\mathcal{N}$ in $A$, or equivalently $\norm{x}_A=(N:xN)^{-d}$ for any $R$-lattice $N$ in $K^d$. These are related by $d^\times x = \norm{x}_A^{-1} dx$.

The restricted zeta function can now be written as a zeta integral on $A$. A \textbf{Schwartz--Bruhat} (SB) function on $A$ is a locally constant and compactly supported function from $A$ to $\C$. The \textbf{zeta integral} of an SB function on $A$ is defined as
\begin{equation}
    Z(\Phi;s) := \int_{A^\times} \Phi(x) \norm{x}_A^s d^\times x.
\end{equation}

\begin{lemma}[{\cite[Eq. (11)]{bushnellreiner1980zeta}}]
    For lattices $L,M$ in $V=K^{d\times 1}$,
    \begin{equation}\label{eq:restricted-zeta-integral}
        \zeta_{M}(s;L)=\mu^{\times}(\Aut L)^{-1}(M:L)^{-s} Z(\one_{\Hom(L,M)};s/d).
    \end{equation}
    where $\one_{\Hom(L,M)}$ is the indicator function, and $\Aut L:=\set{x\in A:xL=L}$.
\end{lemma}

The functional equation arises from Tate's thesis for zeta integrals of Schwartz--Bruhat functions on $A$, viewed as a semisimple algebra over $Q$.  Fix an additive character $\chi:Q\to \C^\times$ that is trivial on $Z$ but nontrivial on any fractional ideal $I\nsubseteq Z$. For a trace function $\tr:A\to Q$ such that the pairing $A\times A\to Q, (x,y)\mapsto \tr(xy)$ is non-degenerate, the \textbf{Fourier transform} of an SB function $\Phi$ on $A$ (with respect to $\chi, \tr$, and $\mathrm{d}x$) is defined as
\begin{equation}
    \hhat{\Phi}(y)=\int_A \Phi(x)\chi(\tr(xy)) \, \mathrm{d}x,
\end{equation}
which is again an SB function on $A$. 

\begin{theorem}[{Local function equation for semisimple algebras \cite[Proposition 2]{bushnellreiner1980zeta}}] \label{thm:tate-func-eq}
    Let $\Phi$ and $\Psi$ be any two Schwartz--Bruhat functions on $A$. Their zeta integrals admit meromorphic continuation and satisfy the functional equation
    \begin{equation}\label{eq:tate-func-eq}
        \frac{Z(\Phi;s)}{Z(\Psi;s)}=\frac{Z(\hhat\Phi;1-s)}{Z(\hhat\Psi;1-s)}.
    \end{equation}
\end{theorem}

To proceed with the proof, we will take $\tr$ to be the modified trace $\tr:A\to Q$ defined by
\begin{equation}
    \tr(x):=\tr_{K/Q}(D^{-1}\tr_{A/K}(x)),
\end{equation}
where $\tr_{A/K}=\tr_{\Mat_d(K)/K}$ is the usual trace of matrices. It is non-degenerate because the trace pairing on $K/Q$ is non-degenerate by the separability of $K/Q$, and the usual trace pairing on the matrix algebra $A/K$ is non-degenerate.

The proof of \Cref{thm:reflection-local} proceeds by applying \Cref{thm:tate-func-eq} with specific choices for $\Phi$ and $\Psi$. A key calculation shows that the Fourier transform of the indicator function $\one_{\Hom(L,E)}$ (where $E=\Omega_R^d$) is proportional to the indicator function of the dual lattice, $(L^\vee)^T$. 

\begin{lemma}\label{lem:fourier-transform-indicator}
    Let $L$ be an $R$-lattice in $V=K^{d\times 1}$, and $E=\Omega_R^{d\times 1} \subeq V$. The Fourier transform of the indicator function of $\Hom(L,E)$ is given by
    \begin{equation}
        \hhat{\one}_{\Hom(L,E)} = \mu(L)^{-d} \one_{L^{1\times d}}=\mu(L)^{-d} \one_{\Hom((L^\vee)^T,E)^T},
    \end{equation}
    where $L^{1\times d}$ is the set of matrices in $K^{d\times d}$ whose columns all lie in $L$.
\end{lemma}
\begin{proof}
    First, note that $\Hom(L,E)=\Hom(L,\Omega_R)^{d\times 1}=(L^\vee)^{d\times 1}$, the set of matrices $x\in A$ whose rows $x^i$ belong to $L^\vee$.
    
    The Fourier transform of its indicator function $\one_{\Hom(L,E)}$ at a point $y \in A$ (with $i$-th column denoted by $y_i$) is then
    \begin{align}
        \hhat{\one}_{\Hom(L,E)}(y) &= \int_{x^1,\dots,x^d \in L^\vee} \chi(\tr_{K/Q}(D^{-1}\tr_{A/K}(xy))) \, \mathrm{d}x^1\cdots \mathrm{d}x^d \\
        &=\int_{x^1,\dots,x^d \in L^\vee} \chi(\tr_{K/Q}(D^{-1}\sum_{i=1}^d x^i y_i)) \, \mathrm{d}x^1\cdots \mathrm{d}x^d\\
        &= \prod_{i=1}^d \int_{x^i \in L^\vee} \chi(\tr_{K/Q}(D^{-1}x^i y_i)) \, \mathrm{d}x^i.\label{eq:fourier-transform-indicator}
    \end{align}
    The inner integral is $\hhat{\one}_{L^\vee}(y_i)$. We evaluate it by cases.

    \emph{Case 1: $y_i \in L$.} By the double dual property, $L = (L^\vee)^\vee=\set{v \in V : u v \in R^\vee \text{ for all } u \in L^\vee}$. Therefore, for any $x^i \in L^\vee$, the product $x^i y_i$ is in $R^\vee$. By the definition of $R^\vee$, this means $\tr_{K/Q}(D^{-1}x^i y_i) \in Z$. Since the character $\chi$ is trivial on $Z$, the integrand is always 1. The integral becomes $\int_{L^\vee} 1 \, \mathrm{d}x^i = \mu(L^\vee)=\mu(L)^{-1}$.

    \emph{Case 2: $y_i \notin L$.} Since $y_i \notin (L^\vee)^\vee$, there must exist some $x^i_0 \in L^\vee$ such that $x^i_0 y_i \notin R^\vee$, so $\tr_{K/Q}(D^{-1}x^i_0 y_i)\notin Z$. By our assumption of $\chi$, this implies that the function $x^i\mapsto \chi(\tr_{K/Q}(D^{-1}x^i y_i))$ defines a non-trivial character on the additive group $L^\vee$. The integral of a non-trivial character over the group is zero.

    Combining these cases, the $i$-th inner integral of \eqref{eq:fourier-transform-indicator} evaluates to $\mu(L)^{-1} \one_L(y_i)$, so the product is $\mu(L)^{-d}\one_{L^{1\times d}}(y)$, proving the first claimed equality.

    Finally, we verify $L^{1\times d}=\Hom((L^\vee)^T,E)^T$. This amounts to showing that a matrix $x\in A$ has all columns $x_i$ in $L$ if and only if $x_i^T\in \Hom((L^\vee)^T,R^\vee)$. The latter condition is equivalent to $x_i^T(L^\vee)^T\subeq R^\vee$, which is the same as $L^\vee x_i \subeq R^\vee$. By the double dual property, this is equivalent to $x_i\in L$. Thus, we have $L^{1\times d}=\Hom((L^\vee)^T,E)^T$, and the second claimed equality follows.
\end{proof}

A second necessary ingredient relates the automorphism groups of a lattice and its dual.

\begin{lemma}\label{lem:aut-of-dual}
    For any $R$-lattice $L$ in $K^{d\times 1}$, we have $\mu^\times(\Aut L)=\mu^\times(\Aut ((L^\vee)^T))$.
\end{lemma}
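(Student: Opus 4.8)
\textbf{Proof proposal for Lemma \ref{lem:aut-of-dual}.}

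The plan is to produce a measure-preserving bijection between $\Aut L$ and $\Aut((L^\vee)^T)$ realized by the transpose-inverse map on $\GL_d(K)$. First I would recall how $\Aut$ interacts with duality: for $x \in \GL_d(K)$ acting on $K^{d\times 1}$, the induced action on the dual lattice is by the inverse transpose. Concretely, if $L' := (L^\vee)^T$ is viewed inside $K^{d\times 1}$ via the transpose isomorphism, then the map $\phi\colon x \mapsto (x^{-1})^T = (x^T)^{-1}$ is a group anti-automorphism (or automorphism up to inversion) of $\GL_d(K)$ that carries $\Aut L$ onto $\Aut L'$. To see this, note $xL = L$ iff $x^\vee$ (the action on $L^\vee \subseteq K^{1\times d}$, which is right multiplication by $x$, equivalently left multiplication by $x^T$ after transposing) stabilizes $L^\vee$, which after transposing to $K^{d\times 1}$ becomes $(x^T)^{-1} L' = L'$ (the inverse appears because the natural dual action is contragredient). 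So $\phi$ restricts to a bijection $\Aut L \to \Aut L'$.

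Next I would check that $\phi$ preserves the measure $\mu^\times$. The key facts are: (i) $x \mapsto x^T$ is measure-preserving on $A^\times = \GL_d(K)$ because it preserves $\Mat_d(\tl R)$ and $\GL_d(\tl R)$, hence preserves the normalized additive measure $\mu$ and the multiplicative measure $\mu^\times = \norm{\cdot}_A^{-1}\mu$ (note $\norm{x^T}_A = \norm{x}_A$ since $\det(x^T) = \det(x)$); (ii) $x \mapsto x^{-1}$ preserves $\mu^\times$ because $\mu^\times$ is a bi-invariant Haar measure on the unimodular group $\GL_d(K)$ — indeed $d^\times x$ is both left- and right-invariant here as noted in the excerpt, and inversion sends a bi-invariant Haar measure to itself. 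Composing, $\phi = (\cdot)^T \circ (\cdot)^{-1}$ is $\mu^\times$-preserving. Since $\Aut L$ and $\Aut L'$ are compact open subgroups of $\GL_d(K)$ and $\phi$ maps one bijectively onto the other preserving $\mu^\times$, we get $\mu^\times(\Aut L) = \mu^\times(\Aut L')$.

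The only mild subtlety — and the step I would be most careful about — is getting the contragredient/transpose bookkeeping exactly right: whether the dual action is by $x^{-1}$, $x^T$, or $(x^T)^{-1}$, and ensuring the transpose isomorphism $K^{1\times d} \to K^{d\times 1}$ is correctly composed so that $\Aut$ of the dual lands on the right subgroup. This is purely a matter of unwinding the definition $L^\vee = \set{\theta \in \Hom_K(V,K) : \theta(L) \subeq R^\vee}$ together with the identification $\Hom_K(V,K) = K^{1\times d}$ and checking that $\theta \mapsto \theta \circ x$ (which is how $x$ acts on functionals) corresponds to left multiplication by $x^T$ on column vectors after transposing, so stabilizing $L^\vee$ corresponds to $x^T$ stabilizing $L'$; replacing $x$ by $x^{-1}$ if one insists on a covariant (left) action gives the contragredient. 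Once this is pinned down, everything else is the routine Haar-measure argument above, and no estimate or hard analysis is needed.
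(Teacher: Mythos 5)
Your proposal is correct and is essentially the paper's argument: carry $\Aut L$ to $\Aut((L^\vee)^T)$ by a transpose-type map and use that this map preserves $\mu^\times$. The only (harmless) difference is that the paper uses $x\mapsto x^T$ directly, proving $\Aut((L^\vee)^T)=(\Aut L)^T$ from the descriptions $L^\vee=\{u\in K^{1\times d}:uL\subeq R^\vee\}$ and $L=\{v\in K^{d\times 1}:L^\vee v\subeq R^\vee\}$ (so only transpose-invariance of $\mu^\times$ is needed), whereas your contragredient map $x\mapsto (x^T)^{-1}$ additionally invokes inversion-invariance of $\mu^\times$ — which is legitimate since $\mu^\times$ is bi-invariant and $\Aut L$ is a group, but is an extra step you could drop by keeping the plain transpose.
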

\begin{proof}
    The transpose map $x \mapsto x^T$ on $A=\Mat_d(K)$ is a measure-preserving automorphism. Thus, it suffices to show that $\Aut((L^\vee)^T) = (\Aut L)^T$. Since $\Aut L$ is the group of units of the subalgebra $\End L:=\Hom(L,L)\subeq A$, it suffices to show $\End((L^\vee)^T)=(\End L)^T$. This is equivalent to showing that for $x \in A$, we have $xL\subeq L$ if and only if $x^T(L^\vee)^T\subeq (L^\vee)^T$. The latter condition is equivalent to $L^\vee x \subeq L^\vee$. By definition, $L^\vee x \subeq L^\vee$ if and only if $(L^\vee x) L \subeq R^\vee$. By similar reason and the double dual property, $xL\subeq L$ if and only if $L^\vee(xL) \subeq R^\vee$. Since the action of matrices is associative, the two conditions are equivalent.
\end{proof}

We now state and prove the reflection principle for the restricted zeta function. As usual, define the \textbf{normalized restricted zeta function} by
\begin{equation}
    \nu_M(s;L)=\nu_M^R(s;L):=\frac{\zeta_M^R(s;L)}{\zeta_{\tl R^d}^{\tl R}(s)}.
\end{equation}

\begin{theorem}[Restricted reflection principle]\label{thm:reflection-local-restricted}
    Let $R$ be an arithmetic local order, $E=\Omega_R^d$, and $\Delta = \abs{\tl R/R}$ be the Serre invariant. For any torsion-free $R$-module $L$ of rank $d$, we have
    \begin{equation}
        \nu_E^R(s;L) = \Delta^{d^2-2ds} \nu_E^R(d-s;(L^\vee)^T).
    \end{equation}
\end{theorem}
\begin{proof}
    Since the result depends only on the isomorphism class of $L$, we may assume $L$ is a fixed lattice in $V=K^{d\times 1}$. We apply \Cref{thm:tate-func-eq} with the choices $\Phi = \one_{\Hom(L,E)}$ and $\Psi = \one_{\Mat_d(\tl R)} = \one_{\Hom(\tl R^d,\tl R^d)}$, and substitute $s \mapsto s/d$. By \eqref{eq:restricted-zeta-integral}, the left-hand side of \eqref{eq:tate-func-eq} becomes
    \begin{equation}
        \frac{Z(\Phi;s/d)}{Z(\Psi;s/d)} = \frac{\mu^\times(\Aut L) (E:L)^s \zeta_E^R(s;L)}{\mu^\times(\GL_d(\tl R)) \zeta_{\tl R^d}^{\tl R}(s)}.
    \end{equation}

    For the right-hand side of \eqref{eq:tate-func-eq}, we use \Cref{lem:fourier-transform-indicator}. The numerator is
    \begin{equation}
        Z(\hhat\Phi;1-s/d) = \mu(L)^{-d} Z(\one_{\Hom((L^\vee)^T,E)^T}; 1-s/d) = \mu(L)^{-d} Z(\one_{\Hom((L^\vee)^T,E)}; 1-s/d)
    \end{equation}
    since transposition preserves both measure and norm. By \eqref{eq:restricted-zeta-integral} and \Cref{lem:aut-of-dual}, we have
    \begin{equation}
        Z(\hhat\Phi;1-s/d) = \mu(L)^{-d} \mu^\times(\Aut L) (E:(L^\vee)^T)^{d-s} \zeta_E^R(d-s; (L^\vee)^T).
    \end{equation}

    For the denominator, note that $\hhat{\one}_{\Mat_d(\tl R)} = \one_{\Mat_d(\tl R)}$ (by direct computation or by applying \Cref{lem:fourier-transform-indicator} to $R = \tl R$, $L = E = \tl R^d$). Thus,
    \begin{equation}
        Z(\hhat\Psi;1-s/d) = \mu^\times(\GL_d(\tl R)) \zeta_{\tl R^d}^{\tl R}(d-s).
    \end{equation}

    Equating both sides of \eqref{eq:tate-func-eq} and simplifying using $(L:M) = \mu(L)\mu(M)^{-1}$, $\mu(L)\mu(L^\vee) = 1$, and $\mu(E) = \Delta^d$ (the last two follow from $(\tl R)^\vee = \tl R$ and $\mu(\tl R) = 1$), we obtain the desired result.
\end{proof}

\begin{proof}[Proof of \Cref{thm:reflection-local}]
    The map $[L] \mapsto [(L^\vee)^T]$ is an involution on the finite set of isomorphism classes of $R$-lattices of rank $d$. The total normalized zeta function is the sum of the restricted ones over all isomorphism classes. Summing the identity from \Cref{thm:reflection-local-restricted} over all $[L]$ gives:
    \begin{equation}
        \nu_E^R(s) = \sum_{[L]} \nu_E^R(s;L) = \sum_{[L]} \Delta^{d^2-2ds} \nu_E^R(d-s;(L^\vee)^T) = \Delta^{d^2-2ds} \nu_E^R(d-s). \qedhere
    \end{equation}
\end{proof}

\begin{remark}\label{rmk:motivic-fourier}
    The motivic analogue of \Cref{thm:reflection-local} in rank $d$ would assert that if $R/k$ is a geometric local order, then the \emph{polynomial} (in light of \Cref{thm:rationality-motivic})
    \begin{equation}
        N^R_E(t) := \frac{Z^R_E(t)}{Z^{\tl R}_{\tl R^d}(t)} \in \KVar{k}[t],
    \end{equation}
    where $E = \Omega_R^d$, satisfies the functional equation
    \begin{equation}\label{eq:motivic-reflection}
        N^R_E(t) = \L^{d^2 \delta} t^{2d\delta} N^R_E(\L^{-d} t^{-1}),
    \end{equation}
    where $\delta = [\tl R:R]$ is the Serre invariant. The rank 1 case is known \cite[Proposition 15]{goettscheshende2014refined}, at least when $R$ is Gorenstein.

    However, motivicizing our proof in rank $d>1$ is subtle. Recall that our proof requires summing \Cref{thm:reflection-local-restricted} over isomorphism classes of rank $d$ torsion-free $R$-modules. The moduli space of rank $d$ torsion-free $R$-modules is a stack, not a scheme, if $d>1$. Even if there are motivic Fourier transform techniques (such as those in \cite{cll2016motivic}), it is unclear if they are directly applicable to reach a motivic version of \Cref{thm:reflection-local-restricted} \emph{relative} to such a stack. In rank $1$, the corresponding moduli space, the compactified Jacobian, is a scheme, which plays a crucial role in the argument in \cite{goettscheshende2014refined}. Therefore, while the statement is natural, the motivic analogue of \Cref{thm:reflection-local} in higher rank remains an open problem.
\end{remark}

\section{Preliminaries on explicit computations}\label{sec:prelim}
In this section, we collect the necessary combinatorial tools and general properties of the zeta functions to prepare for the explicit computations in \Cref{sec:torus}.
\subsection{Basic hypergeometric series}\label{subsec:hypergeom}
We begin by defining the standard notation used in the theory of basic hypergeometric series. The $q$-Pochhammer symbol is defined for $n \in \Z_{\geq 0}\cup \set{\infty}$ by
\begin{equation}
    (a;q)_n := \prod_{k=0}^{n-1} (1-aq^k).
\end{equation}
The $q$-binomial coefficient is defined for integers $n, k$ as
\begin{equation}
    \qbinom{n}{k}_q := \frac{(q;q)_n}{(q;q)_k (q;q)_{n-k}} \in \N[q].
\end{equation}
Throughout our computations, we will use the standard convention that $1/(q;q)_n = 0$ if $n$ is a negative integer. This convention naturally truncates summation ranges, and also implies that $\qbinom{n}{k}_q=0$ if $k<0$ or $k>n$.

\subsection{Combinatorics of DVR modules and Hall polynomials}\label{subsec:dvr}
The key reason why we can derive explicit formulas in \Cref{sec:torus} is that we are able to reduce our module counting problems over orders to combinatorics of modules over certain DVR quotients. Let $(V,\pi,\Fq)$ be a DVR with uniformizer $\pi$ and residue field $\Fq$.
\begin{definition}
    Any finite(-cardinality) $V$-module $M$ is isomorphic to a direct sum $M \simeq \bigoplus_{i=1}^l V/\pi^{\lambda_i} V$ for a unique partition $\lambda=(\lambda_1 \geq \lambda_2 \geq \dots \geq \lambda_l > 0)$. This partition $\lambda$ is called the \textbf{type} of $M$. For a submodule $N \subeq M$, the \textbf{cotype} of $N$ in $M$ is the type of the quotient module $M/N$.
\end{definition}

For a partition $\lambda$, we denote its size by $\abs{\lambda} := \sum \lambda_i$ and its conjugate partition by $\lambda'$. As a result, the number of its parts is $\lambda'_1$, and it is also the rank as well as the minimal number of generators of a module of type $\lambda$. Recall the basic fact about counting surjections:
\begin{lemma}\label{lem:sur_probability}
    For $d,r\geq 0$, the number of surjective linear maps from $\Fq^d$ to $\Fq^r$ is given by 
    \begin{equation}
        q^{dr} \frac{(q^{-1};q^{-1})_d}{(q^{-1};q^{-1})_{d-r}}.
    \end{equation}
    Note that this expression is zero if $d<r$. Consequently, by Nakayama's lemma, for a $V$-module $M$ of type $\mu$,
    \begin{equation}
        \abs{\Surj_V(V^d,M)}=q^{d\abs{\mu}} \frac{(q^{-1};q^{-1})_d}{(q^{-1};q^{-1})_{d-\mu'_1}}.
    \end{equation}
\end{lemma}

The number of submodules of a given type and cotype is governed by a universal polynomial in the size of the residue field.

\begin{definition-theorem}[{\cite[Chapter II]{macdonaldsymmetric}}]\label{def:hall-poly}
    Given partitions $\lambda, \mu, \nu$, there exists a universal polynomial $g^\lambda_{\mu\nu}(q) \in \N[q]$, called the \textbf{Hall polynomial}, such that for any DVR $V$ with residue field $\Fq$, the value $g^\lambda_{\mu\nu}(q)$ is the number of submodules of type $\mu$ and cotype $\nu$ in a fixed $V$-module of type $\lambda$. Moreover, $g^\lambda_{\mu\nu}=g^\lambda_{\nu\mu}$.
\end{definition-theorem}

In our computations, we will only need the sum of these polynomials over all possible cotypes.

\begin{theorem}[{\cite{warnaar2013remarks}}]\label{thm:hall_skew}
    For partitions $\mu, \lambda$ and a DVR $(V,\pi,\Fq)$, the number of submodules of type $\mu$ in a fixed module of type $\lambda$ is given by the \textbf{skew Hall polynomial}
    \begin{equation}
        g^\lambda_\mu(q) := \sum_\nu g^\lambda_{\mu\nu}(q) = q^{\sum_{i\ge 1}\mu_{i}'(\lambda_{i}'-\mu_{i}')}\prod_{i\ge 1}\qbinom{\lambda_{i}'-\mu_{i+1}'}{\lambda_{i}'-\mu_{i}'}_{q^{-1}}.
    \end{equation}
    Note that $g^\lambda_\mu(q)\neq 0$ if and only if $\mu\subeq \lambda$.
\end{theorem}
By the symmetry $g^\lambda_{\mu\nu}=g^\lambda_{\nu\mu}$, the skew Hall polynomial $g^\lambda_\mu(q)$ also equals to the number of submodules of \emph{cotype} $\mu$ in a fixed module of type $\lambda$.

\begin{remark}\label{rmk:hall_in_box}
    An important special case is when the ambient module has type $\lambda=(m^d):=(\underbrace{m,\dots,m}_d)$, corresponding to a $d \times m$ rectangular Young diagram. For any sub-partition $\mu \subeq (m^d)$, the Hall polynomial $g^{(m^d)}_{\mu\nu}(q)$ is nonzero for precisely one partition $\nu$, which corresponds to the complement of $\mu$ in the box. We denote this by $\nu=(m^d)-\mu$. In this case, $g^{(m^d)}_\mu(q) = g^{(m^d)}_{\mu, (m^d)-\mu}(q)$.
\end{remark}

A \textbf{DVR quotient} is a quotient of the form $A=V/\pi^n V$ for some $n\geq 0$. A finite module over $A$ is simply a finite $V$-module whose type $\lambda$ satisfies $\lambda_1\leq n$. By the type or cotype of a module over $A$, we mean the type or cotype of the corresponding module over $V$. If $A$ is a product of DVR quotients $A=\prod_{i=1}^b A_i$, then the \textbf{type vector} of a finite module over $A$ is the $b$-tuple of partitions whose $i$-th partition is the type of $M\otimes_A A_i$ over $A_i$. For $b$-tuples of partitions $\boldsymbol{\lambda}=(\lambda^{(i)})_{i=1}^b,\boldsymbol{\mu}=(\mu^{(i)})_{i=1}^b$, define $g^{\boldsymbol{\lambda}}_{\boldsymbol{\mu}}=\prod_{i=1}^b g^{\lambda^{(i)}}_{\mu^{(i)}}$.

We conclude this subsection with some motivic lemmas.
\begin{lemma}\label{lem:hall-in-box-motivic}
    Let $k$ be an algebraically closed field, $\tl R=\prod_{i=1}^{b}k[[T_i]]$, $\boldsymbol{\lambda}=(\lambda^{(i)})_{i=1}^{b(R)}$ be a $b$-tuple of partitions, each with at most $d$-parts, and $\tl M$ is an $\tl R$-lattice of rank $d$. Then
    \begin{equation}\label{eq:hall-in-box-motivic}
        \set{\tl L\subeq \tl M:\boldsymbol{type}(\tl M/\tl L)=\boldsymbol{\lambda}}
    \end{equation}
    is a disjoint union of affine cells. In particular, its motive is $g^{((m^d),\dots,(m^d))}_{\boldsymbol{\lambda}}(\L)$ for any $m\geq \max_i \lambda^{(i)}_1$.
\end{lemma}
\begin{proof}
    Assume $\tl M=\tl R^d$, then by \Cref{subsec:schubert}, the moduli space \eqref{eq:hall-in-box-motivic} is a disjoint union of Schubert cells. The assertion about its motive then follows from the finite-field counterpart.
\end{proof}

\begin{lemma}\label{lem:skew-hall-type-motivic}
    Let $k$ be an algebraically closed field, $\tl R=\prod_{i=1}^{b}k[[T_i]]$, $\boldsymbol{\lambda},\boldsymbol{\mu}$ be $b(R)$-tuples of partitions, each with at most $d$-parts, and $\ut M,\tl L$ are two fixed $\tl R$-lattices of rank $d$ such that $\boldsymbol{type}(\tl L/\ut M)=\boldsymbol{\lambda}$. Then
    \begin{equation}\label{eq:skew-hall-type-motivic}
        \set{\tl N: \ut M\subeq \tl N\subeq \tl L, \boldsymbol{type}(\tl N/\ut M)=\boldsymbol{\mu}}
    \end{equation}
    is a disjoint union of affine cells. In particular, its motive is $g^{\boldsymbol{\lambda}}_{\boldsymbol{\mu}}(\L)$.
\end{lemma}
\begin{proof}
    It suffices to prove the case $b=1$. By passing to isomorphic copies, we may assume $\ut M=\tl R^d$ and $\tl L=D_{-\lambda} \tl R^d$. Then using \Cref{subsec:schubert}, let $\nu$ be an arbitrary signature and $\tl N\in X_{-\nu}^\circ$, then $\tl N\supeq \ut M$ if and only if $\nu\geq 0$. Since $\tl N=x_{\tl N}\tl R^d$ is the column span of the normal form of $\tl N$, the condition $\tl N\subeq \tl L$ is equivalent to that the $i$-th row of $x_{\tl N}$ is divisible by $T^{-\lambda_i}$ for all $i$. This condition cuts out an affine subspace of $X_{-\nu}^\circ$. Finally, the condition $\mathrm{type}(\tl N/\ut M)=\mu$ is equivalent to $\nu_\geq =\mu$. Putting these together, the moduli space \eqref{eq:skew-hall-type-motivic} is a disjoint union of affine subspaces of $X_{-\nu}^\circ$, where $\nu$ ranges over all permutations of $\mu$, so we are done.
\end{proof}

\subsection{Framework for explicit computations}
Given a local order $R$, which lattice zeta functions are the most promising sources of new $q$-series? We propose two natural candidates: those associated with the lattices $R^d$ and $\tl R^d$. This line of inquiry, however, depends on a positive answer to a fundamental question:

\begin{question}
If $R$ is a geometric local order and $E=R^d$ or $\tl R^d$, is the motive $[\Quot_n(E)]$ a polynomial in $\L$?
\end{question}

While answering this in full generality is out of reach, we believe that this polynomiality holds in any case where explicit computations are feasible. We therefore introduce the following notion to define the scope of our study:

\begin{definition}
A geometric local order $R$ is \textbf{good} if $[\Quot_n(R^d)]$ and $[\Quot_n(\tl R^d)]$ are polynomials in $\L$ for all $d, n \ge 0$. For a good order, we view the motivic zeta function $Z_E(t)$ as a series $Z_E(t;\L)$ in two variables, where $E=R^d$ or $\tl R^d$.
\end{definition}
Essentially, a local order is ``good'' if the associated counting problems over $\Fq$ yield answers that are polynomial in $q$.

For good orders, the reflection principle holds motivically. By combining the point-count result of \Cref{thm:reflection-local} with a standard spreadout argument (in the sense of \cite[Appendix]{hausel2008mixed}), we obtain:
\begin{proposition}\label{thm:reflection-motivic-good}
    Let $R/k$ be a good geometric local order that is Gorenstein. Then the motivic reflection principle \eqref{eq:motivic-reflection} holds for $E=R^d$. \hfill \qedsymbol
\end{proposition}

Our goal is to understand the nature of these two-variable series arising from good local orders. To that end, we now collect some general properties that not only aid our explicit computations, but also provide consistency checks for future study and serve as a rich source of nontrivial $q$-series identities. A key theme throughout is that $\tl R^d$ is a convenient object for direct computation, while $R^d$ is often more structurally significant, for instance due to the reflection principle.

To better compare our results with classical $q$-series, we adopt a particular rescaling of $Z_{R^d}(t)$. Recall the motivic Coh zeta function for a geometric local order $R/k$,
\begin{equation}\label{eq:motivic-coh}
    \Zhat_R(t) := \sum_{n \ge 0} [\Coh_n(R)]\, t^n \in \KStck{k}[[t]],
\end{equation}
and its arithmetic counterpart for an arithmetic local order $R$,
\begin{equation}\label{eq:arithmetic-coh}
    \zetahat_R(s) := \sum_{Q\in \textbf{FinMod}_R/{\sim}} \frac{1}{\abs{\Aut_R(Q)}}\abs{Q}^{-s}.
\end{equation}
In light of \Cref{thm:A}, we may view these as limits of their ``finitized'' counterparts, defined below.

\begin{definition}\label{def:finitized}
The \textbf{finitized motivic Coh zeta function} is
\begin{equation}
    \Zhat_{R,d}(t) := Z^R_{R^d}(\L^{-d}t),
\end{equation}
and its arithmetic counterpart is
\begin{equation}
    \zetahat_{R,d}(s) := \zeta^R_{R^d}(s+d).
\end{equation}
The \textbf{normalized} versions are $\Nhat_{R,d}(t) := \Zhat_{R,d}(t) / \Zhat_{\tl R,d}(t)$ and $\nuhat_{R,d}(s) := \zetahat_{R,d}(s) / \zetahat_{\tl R,d}(s)$.
\end{definition}

Though the finitized Coh zeta function is just a rescaling of the lattice zeta function for $R^d$, we expect it to be more combinatorially tame, given that its $d\to\infty$ limit is guaranteed to exist. As an initial observation, we can analyze its special values.

\begin{proposition}\label{prop:s-zero-specialization}
    Let $R$ be a Gorenstein arithmetic local order and let $\Delta = \abs{\tl R/R}$. The special value of $\nuhat_{R,d}(s)$ at $s=0$ can be computed from the data of $\tl R^d$ alone:
    \begin{equation}
        \nuhat_{R,d}(0) = \Delta^{-d^2} \nu_{\tl R^d}^R(0).
    \end{equation}
    Similarly, if $R/k$ is a good Gorenstein geometric local order and $\delta = [\tl R:R]$, then
    \begin{equation}
        \Nhat_{R,d}(1) = \L^{-d^2 \delta} N_{\tl R^d}^R(1).
    \end{equation}
\end{proposition}
\begin{proof}
    The arithmetic case follows by combining \Cref{thm:reflection-local} and \Cref{cor:central-arithmetic}. The geometric case follows by combining \Cref{thm:reflection-motivic-good} and \Cref{cor:central-motivic}.
\end{proof}

Another general property arises from considering the Euler characteristic. For an algebraically closed field $k$, let $\chi:\KVar{k}\to \Z$ (or $\Q
_\ell$) be the Euler characteristic ring homomorphism arising from Betti cohomology or $\ell$-adic cohomology. Note that $\chi(\L)=1$.
\begin{proposition}\label{prop:chi-power}
    Let $k$ be an algebraically closed field, $R$ be a finitely generated $k$-algebra or a completion thereof, and $E$ be a finitely generated module over $R$. Then for any $d \ge 0$, we have
    \begin{equation}
        \chi(Z_{E^d}(t)) = \chi(Z_E(t))^d.
    \end{equation}
    In particular, if $R$ is a good geometric order, then for $E=R$ or $E=\tl R$, we have $Z_{E^d}(t;1) = Z_E(t;1)^d$.
\end{proposition}

Numerical data from the examples in \Cref{sec:torus} suggests a similar identity when specializing $\L$ to a root of unity. We conjecture that for $r \mid d$ and $\zeta_r$ a primitive $r$-th root of unity,
\begin{equation}
    Z_{E^d}(t;\zeta_r) \overset{?}{=} Z_E(t^r;1)^{d/r}.
\end{equation}
We expect this ``cyclic-sieving'' phenomenon might have a geometric interpretation using a twisted Euler characteristic.

\begin{proof}[Proof of \Cref{prop:chi-power}]
    We use the theorem of Bia\l ynicki-Birula \cite{bb1973fixed}, which states that for a variety with a torus action, its Euler characteristic equals that of its fixed-point locus.

    Consider the natural scaling action of the torus $T = (k^\times)^d$ on $E^{\oplus d}$. For each $n\geq 0$, this action induces a $T$-action on each Quot scheme $\Quot_{n}(E^d)$. A submodule $F \subeq E^d$ corresponds to a $T$-fixed point if and only if it is a direct sum of submodules in each component, i.e., $F = F_1 \oplus \dots \oplus F_d$ where each $F_i \subeq E$. This can be shown with a standard Vandermonde matrix argument for a generic one-parameter subgroup of $T$.

    This characterization implies that the fixed-point locus has a locally closed decomposition:
    \begin{equation*}
        (\Quot_n(E))^T = \bigsqcup_{n_1+\dots+n_d=n} \prod_{i=1}^d \Quot_{n_i}(E).
    \end{equation*}
    Applying the theorem of Bia\l ynicki-Birula, we get
    \begin{equation*}
        \chi(\Quot_{n}(E^d)) = \chi((\Quot_{n}(E^d))^T) = \sum_{n_1+\dots+n_d=n} \prod_{i=1}^d \chi(\Quot_{n_i}(E)),
    \end{equation*}
    so the $n$-th coefficients of the claimed equality match.
\end{proof}

\section{Explicit formulas for the \texorpdfstring{$y^2=x^n$}{y2=xn} singularity}\label{sec:torus}

In this section, we apply the framework from the previous section to compute the normalized lattice zeta functions for the local rings corresponding to the $(2,n)$ torus knot and link singularities. 

\subsection{Structure of the rings \texorpdfstring{$R_{2,n}$}{R2n}}
We consider two families of planar curve germs. For $m \ge 1$, we define the \textbf{cusp} singularity or the $(2,2m+1)$-\textbf{torus knot} singularity by
\begin{equation}
    R_{2,2m+1} := k[[X,Y]]/(Y^2-X^{2m+1}),
\end{equation}
and the \textbf{nodal} singularity or the $(2,2m)$-\textbf{torus link} singularity by
\begin{equation}
    R_{2,2m} := k[[X,Y]]/(Y(Y-X^m)).
\end{equation}
When the characteristic of $k$ is not $2$, $R_{2,2m}$ is isomorphic to the ring $k[[X,Y]]/(Y^2-X^{2m})$.

The key properties of these rings, including their normalizations and conductors, are summarized below.

\begin{itemize}
    \item The cusp ($R_{2,2m+1}$): This ring is an integral domain. Its normalization is the DVR $\tl R = k[[T]]$, and the inclusion can be realized by the parametrization $X=T^2, Y=T^{2m+1}$. The conductor is $\fc = (T^{2m})\tl R$, and the quotient by the conductor is a DVR quotient $R/\fc \simeq k[[X]]/(X^{m})$.

    \item The node ($R_{2,2m}$): This ring has two branches. Its normalization is $\tl R = k[[T_1]] \times k[[T_2]]$. The inclusion can be identified with the subring $k[[T_1^m, T_2^m, T_1+T_2]] \subeq \tl R$, with parametrization $X=T_1+T_2$ and $Y=T_1^m$. The conductor is $\fc=(T_1^m, T_2^m)\tl R$, and the quotient is a DVR quotient $R/\fc \simeq k[[X]]/(X^m)$.
\end{itemize}

A crucial feature for our computations is that in both cases, the quotient ring $R/\fc$ is a DVR quotient, allowing us to use the combinatorics of Hall polynomials. Throughout, denote $A=R/\fc=k[[X]]/(X^m)$ and $\tl A = \tl R/\fc$.

\subsection{Parametrizing extension fibers}
Recall that the extension fiber $E_R(\tl L):=\set{L\subseteq_{R}\tl L: \tl RL=\tl L}$ is a main player in the formulas in \Cref{sec:rationality} and \Cref{sec:motivic}. In this subsection, we show how to explicitly classify the lattices in $E_R(\tl L)$ by reducing the problem modulo the conductor $\fc$.

Let $\tl L$ be an $\tl R$-lattice. The classification of $L \in E_R(\tl L)$ depends on the structure of the $\tl A$-module $V:=\tl L/\fc \tl L$, where $A=R/\fc$ and $\tl A = \tl R/\fc$. The key observation is that $V$ decomposes into a direct sum of two free $A$-modules of rank $d$.

\begin{notation}
Let $V:=\tl L/\fc \tl L$. We define a decomposition $V=V_1 \oplus V_2$ as follows:
\begin{itemize}
    \item For the cusp $R_{2,2m+1}$: Let $F_{\tl L}$ be a free $R$-sublattice of $\tl L$ such that $\tl R F_{\tl L} = \tl L$. Define the $A$-submodules $V_2 := F_{\tl L}/\fc\tl L$ and $V_1 := T V_2$. Then $V_1$ and $V_2$ are free $A$-modules of rank $d$, and $V=V_1 \oplus V_2$.
    \item For the node $R_{2,2m}$: Define the $A$-submodules $V_i := e_i V$ for $i=1,2$, where we recall $e_i$ is the idempotent of the $i$-th branch of $\tl R$. Then $V_1$ and $V_2$ are free $A$-modules of rank $d$, and $V=V_1 \oplus V_2$.
\end{itemize}
Any $L \in E_R(\tl L)$ corresponds to an $A$-submodule $W_L := L/\fc \tl L \subeq V$, and we have $[\tl L:L]=[V:W_L]$.
\end{notation}

The crucial step is to translate the condition $\tl R L = \tl L$ into a simpler algebraic condition on $W_L$ inside $V$.

\begin{lemma}\label{lem:extension-condition}
An $R$-lattice $L$ with $\fc\tl L \subeq L \subeq \tl L$ is in the extension fiber $E_R(\tl L)$ if and only if its corresponding $A$-submodule $W_L$ satisfies:
\begin{enumerate}
    \item $W_L + V_1 = V$ if $R=R_{2,2m+1}$.
    \item $W_L + V_1 = V$ and $W_L + V_2 = V$ if $R=R_{2,2m}$.
\end{enumerate}
\end{lemma}
\begin{proof}~
\begin{enumerate}
    \item For the cusp case, denote $F=F_{\tl L}$ for brevity. From the ring structure, we have $\tl R=R+TR$, which implies the identities $\tl RL=L+TL$ and $\tl L=\tl RF=F+TF$. The condition $L \in E_R(\tl L)$ is therefore $\tl RL = \tl L$, or equivalently, $L+TL = \tl L$. We claim this is equivalent to the condition $L+TF=\tl L$. Our proof will exploit the topological nilpotence of $T$.

($\Leftarrow$) Assume $L+TF=\tl L$. To prove $L+TL=\tl L$, it suffices to show that $\tl L=L+TL+T^n F$ for all $n\geq 1$, since for $n\gg 0$, $T^n F \subeq L$. We proceed by induction on $n$. The base case $n=1$ follows from the assumption: $\tl L = L+TF \subeq L+T\tl L = L+T(L+TF) = L+TL+T^2F$. For the induction step, assume $\tl L=L+TL+T^n F$. Then we have
    \begin{equation}
        \tl L = L+TL+T^n F \subeq L+TL+T^n \tl L = L+TL+T^n(L+TF) = L+TL+T^{n+1}F,
    \end{equation}
where the last equality uses the fact that $T^n L\subeq L$ if $n$ is even and $T^n L\subeq TL$ if $n$ is odd, since $T^2\in R$.

($\Rightarrow$) Assume $L+TL=\tl L$. We have the inclusion
    \begin{equation}
        \tl L = L+TL \subeq L+T\tl L = L+T(F+TF) = L+TF+T^2 F.
    \end{equation}
To prove $\tl L=L+TF$, it suffices to show that $\tl L=L+TF+T^{2^n}F$ for all $n\geq 1$, again because $T^{2^n}F\subeq L$ for $n\gg 0$. We have just proved the base case $n=1$. For the inductive step, assume $\tl L=L+TF+T^{2^n}F$. Then
    \begin{align}
    \tl L &= L+TF+T^{2^n}F \subeq L+TF+T^{2^n}\tl L \\
    &= L+TF+T^{2^n}(L+TF+T^{2^n}F) \\
    &= (L+T^{2^n}L)+(TF+T^{2^n+1}F)+T^{2^{n+1}}F\\
    &=L+TF+T^{2^{n+1}}F. 
    \end{align}
    
Having established the equivalence between $L\in E_R(\tl L)$ and $L+TF=\tl L$, reducing the latter modulo $\fc\tl L$ gives the desired condition $W_L + V_1 = V$.

\item  For the nodal case, $\tl R L = e_1 L\oplus e_2 L$ and $\tl L=e_1\tl L\oplus e_2\tl L$, so $\tl RL=\tl L$ if and only if $e_i L = e_i \tl L$ for $i=1,2$. Since multiplication by $e_i$ is a projection, this is equivalent to $L + e_{3-i} \tl L = \tl L$. Reducing these two conditions modulo $\fc \tl L$ gives what we want.
\end{enumerate}
\end{proof}

With this equivalence, we can classify the submodules $W_L$ using the decomposition $V=V_1 \oplus V_2$. This leads to the following parametrizations for the extension fiber.

\begin{lemma}\label{lem:cusp_extension_fiber}
    Let $R=R_{2,2m+1}$. The map $L \mapsto (W_L \cap V_1, \varphi_L)$, where $\varphi_L$ is the composition $V_2 \to V \to V/W_L \simeq V_1/(W_L \cap V_1)$, defines a bijection
    \begin{equation}
        E_R(\tl L) \to \set{(W',\varphi): W'\subeq_A V_1, \varphi\in \Hom_A(V_2,V_1/W')}.
    \end{equation}
    Moreover, under this bijection, we have an isomorphism of $A$-modules $\tl L/L \simeq V_1/W'$.
\end{lemma}
\begin{proof}
    By \Cref{lem:extension-condition}, $L \in E_R(\tl L)$ if and only if its corresponding $A$-submodule $W_L$ satisfies $W_L + V_1 = V$. The result then follows directly from applying \Cref{lem:extension_problem}.
\end{proof}

\begin{lemma}\label{lem:node_extension_fiber}
    Let $R=R_{2,2m}$. The map $L \mapsto (W_L \cap V_1, \varphi_L)$ defines a bijection
    \begin{equation}
        E_R(\tl L) \to \set{(W',\varphi): W'\subeq_A V_1, \varphi\in \Surj_A(V_2,V_1/W')},
    \end{equation}
    where $\Surj_A$ denotes the set of surjective $A$-module homomorphisms. Moreover, we have $\tl L/L \simeq_A V_1/W'$.
\end{lemma}
\begin{proof}
    By \Cref{lem:extension-condition}, an $R$-lattice $L$ is in the extension fiber if and only if its corresponding $A$-submodule $W_L$ satisfies both $W_L+V_1=V$ and $W_L+V_2=V$. We first classify submodules satisfying the first condition using \Cref{lem:extension_problem}, which gives the pair $(W', \varphi_L)$. The second condition, $W_L+V_2=V$, is then equivalent to the map $\varphi_L: V_2 \to V_1/W'_L$ being surjective.
\end{proof}

\subsection{Zeta functions for $\tl R$-lattices}
We can now apply the parametrizations from the previous subsection to compute the zeta functions for $\tl R$-lattices, which are the ``easier" cases. The formulas are expressed as sums over partitions involving Hall polynomials. For clarity, we handle the arithmetic case first. Throughout this subsection, we let $k=\Fq$ and $t:=q^{-s}$.

\begin{proposition}\label{prop:cusp-tl}
    Let $R=R_{2,2m+1}$. Then
    \begin{equation}
        \nu_{\tl R^d}^R(s) = \sum_{\mu\subeq (m^d)} g^{(m^d)}_\mu(q)(q^d t)^{\abs{\mu}}.
    \end{equation}
\end{proposition}
\begin{proof}
    Write $\tl L=\tl R^d$ for brevity. By \Cref{thm:rationality-tl-arithmetic}, $\nu_{\tl L}^R(s) = \sum_{L \in E_R(\tl L)} (\tl L:L)^{-s}$. We use the classification from \Cref{lem:cusp_extension_fiber}, after fixing an arbitrary choice of $F_{\tl L}$. A pair $(W',\varphi)$ corresponds to a lattice $L$ with colength $[\tl L:L]=[V_1:W']$. If the cotype of the $A$-module $W'$ in $V_1$ is $\mu$, then $[V_1:W']=\abs{\mu}$.

    Recall $V_1\simeq A^d$ and $A$ is a length $m$ DVR quotient, so the type of $V_1$ is $(m^d)$. The number of choices for $W'$ with cotype $\mu$ is given by the skew Hall polynomial $g^{(m^d)}_\mu(q)$. For a fixed $W'$, the number of choices for $\varphi \in \Hom_A(V_2, V_1/W')$ is $\abs{V_1/W'}^d = q^{d\abs{\mu}}$. Summing over all possibilities gives
    \begin{equation}
        \zeta_{\tl R^d}^R(s) = \sum_{\mu\subeq (m^d)} g^{(m^d)}_\mu(q) q^{d\abs{\mu}} (q^{\abs{\mu}})^{-s} = \sum_{\mu\subeq (m^d)} g^{(m^d)}_\mu(q) q^{d\abs{\mu}} t^{\abs{\mu}},
    \end{equation}
    as desired.
\end{proof}

\begin{proposition}\label{prop:node-tl}
    Let $R=R_{2,2m}$. Then
    \begin{equation}
        \nu_{\tl R^d}^R(s) = \sum_{\mu\subeq (m^d)} g^{(m^d)}_\mu(q) \frac{(q^{-1};q^{-1})_d}{(q^{-1};q^{-1})_{d-\mu_1'}} (q^d t)^{\abs{\mu}}.
    \end{equation}
\end{proposition}
\begin{proof}
    The argument is analogous to the cusp case, but uses the classification from \Cref{lem:node_extension_fiber}. For a fixed submodule $W'$ with cotype $\mu$, the number of choices for $\varphi$ is $\abs{\Surj_A(V_2, V_1/W')}$. By \Cref{lem:sur_probability}, this number is $q^{d\abs{\mu}}\frac{(q^{-1};q^{-1})_d}{(q^{-1};q^{-1})_{d-\mu_1'}}$. Summing over all possible cotypes $\mu$ gives the desired result.
\end{proof}

\subsection{Zeta functions for $R^d$}\label{subsec:boundary}
We now tackle the more difficult computation for the lattice $R^d$. Throughout this subsection, let $M=R^d$ and choose $\ut M=\fc R^d$. The strategy is to re-stratify the boundary locus $B_R(M;\ut M)=\set{(\tl L,L):\tl L\supeq \ut M, L\in E_R(\tl L;M)}$ defined in \Cref{def:boundary-locus}, in terms of an auxiliary \textbf{boundary $R$-lattice} $L_b:=\tl L\cap M$. Note that $L_b$ recovers $\tl L$ by $\tl L=\tl R L_b$, since $\tl RL_b\supeq \tl RL=\tl L$. Not every $L_b$ can arise this way: it is necessary that $\tl RL_b\cap M=L_b$. Having chosen $L_b$, the condition $L\in E_R(\tl L; M)$ is equivalent to $L\in E_R(\tl L;L_b)$. The set $B_R(M; \ut M)$ can thus be described as the disjoint union
\begin{equation}
    B_R(M;\ut M)=\bigsqcup_{L_b\in \partial_R(M;\ut M)} E_R(\tl RL_b;L_b),
\end{equation}
where
\begin{equation}
    \partial_R(M;\ut M)=\set{L_b:\ut M\subeq L_b\subeq_R M,\; \tl RL_b\cap M=L_b}.
\end{equation}
The following lemma describes the general structure of the inner set.

\begin{lemma}[Structure of the restricted extension fiber]\label{lem:restricted-extension-structure}
    Let $L_b \in E_R(\tl L)$ be classified by the datum $(W'_{L_b}, \varphi_{L_b})$ according to \Cref{lem:cusp_extension_fiber} or \Cref{lem:node_extension_fiber}. A lattice $L \in E_R(\tl L)$ is a sublattice of $L_b$ if and only if its classifying datum $(W'_L, \varphi_L)$ satisfies:
    \begin{enumerate}
        \item $W'_L$ is a submodule of $W'_{L_b}$.
        \item $\varphi_L$ is a lift of $\varphi_{L_b}$ (i.e., $\varphi_{L_b}$ is the composition $V_2 \xrightarrow{\varphi_L} V_1/W'_L \to V_1/W'_{L_b}$).
    \end{enumerate}
\end{lemma}
\begin{proof}
    A lattice $L \in E_R(\tl L)$ is a sublattice of $L_b$ if and only if its corresponding $A$-submodule $W_L$ is a submodule of $W_{L_b}$. The result follows from the definitions of the bijections in \Cref{lem:extension_problem}.
\end{proof}

This structural lemma allows us to compute the relevant generating function for $E_R(\tl RL_b;L_b)$, which we find depends only on the type of the $A$-module $\tl R L_b / L_b$. 

\begin{lemma}[Generating function for the restricted extension fiber]\label{lem:general-purpose-counting}
    Assume $k=\Fq$. Let $L_b$ be an $R$-lattice, and let $(m^d)-\lambda$ be the type of the $A$-module $\tl R L_b/L_b$. The generating function
    \begin{equation}
        G_{L_b}(t) := \sum_{L \in E_R(\tl R L_b; L_b)} t^{[L_b:L]}
    \end{equation}
    is given by:
    \begin{enumerate}
        \item If $R=R_{2,2m+1}$, then
        \begin{align}
            G_{L_b}(t)& = \sum_{\mu} g^{\lambda}_\mu(q) (q^{d}t)^{\abs{\lambda}-\abs{\mu}} \\&= \sum_{\mu} g^{\lambda}_\mu(q) (q^{d}t)^{\abs{\mu}}.\label{eq:cusp-restricted-formula}
        \end{align}
        \item If $R=R_{2,2m}$, then
        \begin{equation}\label{eq:node-restricted-formula}
            G_{L_b}(t) = \sum_{\mu} g^{\lambda}_\mu(q)  \frac{(q^{-1};q^{-1})_{\lambda_m'}}{(q^{-1};q^{-1})_{\mu_m'}} (q^{d}t)^{\abs{\lambda}-\abs{\mu}}.
        \end{equation}
    \end{enumerate}
\end{lemma}
\begin{proof}
    Let $\tl L=\tl RL_b$ and let $(W_{L_b}',\varphi_{L_b})$ be the classification datum for $L_b\in E_R(\tl L)$. From \Cref{lem:cusp_extension_fiber} and \Cref{lem:node_extension_fiber}, the cotype of $W_{L_b}'$ in $V_1$ is $(m^d)-\lambda$, so the type of $W_{L_b}'$ is $\lambda$. We now apply \Cref{lem:restricted-extension-structure} and sum over all data $(W', \varphi)$ classifying $L\in E_R(\tl L;L_b)$. We classify the submodules $W'$ by their type $\mu$ in $W'_{L_b}$; since the type of $W'_{L_b}$ is $\lambda$, there are $g^\lambda_\mu(q)$ choices for $W'$. 
    
    For each such $W'$, we count the number of valid lifts $\varphi$.
    For the cusp case, this is $\abs{\Hom_A(V_2, W'_{L_b}/W')} = \abs{W'_{L_b}/W'}^d = q^{d(\abs{\lambda}-\abs{\mu})}$.
    For the nodal case, however, we must count surjective lifts. By \Cref{lem:sur_homogeneity}, every surjection $\varphi_{L_b}$ has the same number of surjective lifts, so the number of surjective lifts for each given $\varphi_{L_b}$ is
    \begin{equation}
        \frac{\abs{\Surj_A(V_2,V_1/W')}}{\abs{\Surj_A(V_2,V_1/W'_{L_b})}} = \frac{q^{d(md-\abs{\mu})}(q^{-1};q^{-1})_d/(q^{-1};q^{-1})_{\mu_m'}}{q^{d(md-\abs{\lambda})}(q^{-1};q^{-1})_d/(q^{-1};q^{-1})_{\lambda_m'}} = q^{d(\abs{\lambda}-\abs{\mu})} \frac{(q^{-1};q^{-1})_{\lambda_m'}}{(q^{-1};q^{-1})_{\mu_m'}}
    \end{equation}
    by \Cref{lem:sur_probability}.

    Finally, since $[L_b:L]=[W'_{L_b}:W']=\abs{\lambda}-\abs{\mu}$, summing over all $\mu \subeq \lambda$ gives the formulas. To obtain the second formula in the cusp case, we repeat the same argument, but with $\mu$ being the \emph{cotype} of $W'$ in $W_{L_b}'$ instead.
\end{proof}

The following lemma provides the final ingredients for our main computation. It shows every lattice between $\ut M$ and $M$ is a boundary lattice, and how the combinatorial data of a boundary lattice $L_b$ (namely, its cotype $\lambda$ inside $M$) determines the structure of $\tl RL_b/\ut M$ and $\tl RL_b/L_b$, which are needed to compute the lattice zeta function for $M$.

\begin{lemma}\label{lem:boundary-lattice-invariants}
    Let $L_b$ be any $R$-lattice such that $\ut M \subeq L_b \subeq M$, and let $\lambda$ be the type of the $A$-module $M/L_b$. Then
    \begin{enumerate}
        \item We have $\tl RL_b\cap M=L_b$, so that $L_b\in \partial_R(M;\ut M)$.
        \item The type of the $A$-module $\tl R L_b / L_b$ is $(m^d)-\lambda$.
        \item The type vector of the $\tl A$-module $\tl R L_b / \ut M$ is given by
        \begin{itemize}
            \item a single partition $2((m^d)-\lambda)$, if $R=R_{2,2m+1}$, where for any partition $\mu=(\mu_1,\dots)$, $2\mu$ denotes the partition $(2\mu_1,\dots)$. 
            \item the pair of partitions $((m^d)-\lambda,(m^d)-\lambda)$, if $R=R_{2,2m}$.
        \end{itemize}
        In particular, the rank vector of $\tl R L_b/\ut M$ over $\tl A$ is $(d-\lambda_m')\one$, where $\one=1$ if $R=R_{2,2m+1}$ and $\one=(1,1)$ if $R=R_{2,2m}$. 
    \end{enumerate}
\end{lemma}
\begin{proof}
    Let $\tl L=\tl RL_b$. Consider the tower of lattices
    \[
    \begin{tikzcd}[row sep=tiny, column sep=large]
    & \tl R M \ar[dl, no head] \ar[dr, no head] & \\
    \tl L \ar[dr, no head] & & M \ar[dl, no head] \\
    & L_b \ar[d, no head] & \\
    & \ut M &
    \end{tikzcd}
    \]
    Define $W_b:=L_b/\ut M$. Modulo $\ut M$, the tower becomes
    \[
    \begin{tikzcd}[row sep=tiny, column sep=large]
        & \tl A^d \ar[dl, no head] \ar[dr, no head] & \\
        \tl A W_b \ar[dr, no head] & & A^d \ar[dl, no head] \\
        & W_b \ar[d, no head] & \\
        & 0 &
    \end{tikzcd}
    \]
    The $A$-cotype of $W_b$ in $A^d$ is $\lambda$, so the $A$-type of $W_b$ is $(m^d)-\lambda$.

    For the sake of uniformity, define an element $J\in \tl A$ by $J=T$ in the cusp case and $J=e_1$ in the node case. A crucial structural observation is that $\tl A=A\oplus JA$; moreover, the multiplication map $A\to \tl A$ by $J$ is injective. Since $W_b\subeq A^d$, this means $\tl AW_b=W_b\oplus JW_b$ (with $J$ acting injectively), so $\tl AW_b\cap A^d=W_b$ and $\tl AW_b/W_b\simeq JW_b\simeq W_b$ as $A$-modules. This proves parts (a)(b) the lemma.
    
    For (c), the assertion $\tl AW_b=W_b\oplus JW_b$ with $J$ acting injectively implies that $\tl AW_b\simeq_{\tl A} W_b\otimes_A {\tl A}$, namely, the structure of $\tl AW_b$ is determined intrinsically by the structure of $W_b$. Since for any $1\leq \ell\leq m$,
    \begin{equation}
        \frac{k[[X]]}{(X^\ell)} \otimes_A \tl A \simeq \begin{cases}
            \frac{k[[T]]}{(T^{2\ell})} & \text{if } R=R_{2,2m+1}, \\
            \frac{k[[T_1]]}{(T_1^\ell)} \times \frac{k[[T_2]]}{(T_2^\ell)} & \text{if } R=R_{2,2m},
        \end{cases}
    \end{equation}
    we conclude the final part of the lemma.
\end{proof}

We are now ready to compute the zeta function for $R^d$.

\begin{proposition}\label{prop:cusp-full}
    Let $k=\Fq$ and $R=R_{2,2m+1}$. Then
    \begin{equation}\label{eq:cusp-full-initial}
        \nu_{R^d}^R(s) = \sum_{\lambda,\mu} g^{(m^d)}_{\lambda}(q)\, g^{\lambda}_{\mu}(q)\,(t;q)_{d-\lambda'_m} t^{\abs{\lambda}}(q^{d} t)^{\abs{\mu}}.
    \end{equation}
\end{proposition}
\begin{proof}
    We apply \Cref{thm:rationality-arithmetic}, which can be restated as
    \begin{equation}
        \nu_{R^d}^R(s) = \sum_{L_b: \ut M\subeq L_b\subeq_R M} (t;q)_{\rk(\tl R L_b/\ut M)} \sum_{L \in E_R(\tl R L_b; L_b)} t^{[M:L]}=\sum_{L_b: \ut M\subeq L_b\subeq_R M} (t;q)_{\rk(\tl R L_b/\ut M)} t^{[M:L_b]} G_{L_b}(t).
    \end{equation}
    Let $\lambda$ be the cotype of $L_b$ in $M$. By \Cref{lem:boundary-lattice-invariants}, the rank of $\tl R L_b/\ut M$ is $d-\lambda'_m$, and the type of $\tl RL_b/L_b$ is $(m^d)-\lambda$, so $G_{L_b}(t)$ is directly given by \eqref{eq:cusp-restricted-formula} in \Cref{lem:general-purpose-counting}. Putting everything together yields the desired formula.
\end{proof}

\begin{theorem}\label{thm:node-full}
    Let $k=\Fq$ and $R=R_{2,2m}$. Then
    \begin{equation}\label{eq:node-full}
        \nu_{R^d}^R(s) = \sum_{\lambda,\mu} g^{(m^d)}_\lambda(q) \, g^{\lambda}_\mu(q)  \frac{(q^{-1};q^{-1})_{\lambda_m'}}{(q^{-1};q^{-1})_{\mu_m'}} (t;q)_{d-\lambda_m'}^2 t^{\abs{\lambda}} (q^{d}t)^{\abs{\lambda}-\abs{\mu}}.
    \end{equation}
\end{theorem}
\begin{proof}
    The argument is analogous to the cusp case. \Cref{thm:rationality-arithmetic} is now restated as
    \begin{equation}
        \nu_{R^d}^R(s) = \sum_{L_b: \ut M\subeq L_b\subeq_R M} (t;q)_{\rk_1(\tl R L_b/\ut M)} (t;q)_{\rk_2(\tl R L_b/\ut M)}  t^{[M:L_b]} G_{L_b}(t).
    \end{equation}
    Let $\lambda$ be the cotype of $L_b$ in $M$. By \Cref{lem:boundary-lattice-invariants}, the rank vector of $\tl R L_b/\ut M$ is $(d-\lambda'_m)\one$, and the type of $\tl RL_b/L_b$ is $(m^d)-\lambda$, so $G_{L_b}(t)$ is directly given by \eqref{eq:node-restricted-formula} in \Cref{lem:general-purpose-counting}. Putting everything together yields the desired formula.
\end{proof}

\begin{remark}\label{rmk:motivic-torus}
    If $k$ is an algebraically closed field, an approach similar to that in \Cref{sec:motivic} should establish that $[\Quot_n(E)]$ is a polynomial in $\L$ for $E = \tl R^d$ or $R^d$. Moreover, when $R = R_{2,2m+1}$, the scheme $\Quot_n(E)$ admits an affine paving. This would allow all our counting formulas to upgrade naturally to motivic formulas. While we do not provide a full proof here, we highlight the key observations for the more subtle case $E = R^d$:
    \begin{itemize}
        \item All submodule selection problems are covered by \Cref{lem:hall-in-box-motivic} and \Cref{lem:skew-hall-type-motivic}: 
        \begin{enumerate}
            \item Choosing $L_b$ amounts to selecting a submodule $L_b/\ut M$ of prescribed cotype in the module $A^d$ of rectangular type; \Cref{lem:hall-in-box-motivic} shows that the space of such $L_b$ is a union of affine spaces.
            \item The proof of \Cref{lem:general-purpose-counting} involves choosing a submodule $W'$ of prescribed type in $W_{L_b}'$ (not cotype), and \Cref{lem:skew-hall-type-motivic} provides the required affine paving for the space of $W'$.
        \end{enumerate}
        \item The Iwahori elements yield explicit isomorphisms between a ``moving'' $\tl R$-lattice (such as $\tl RL_b$) and a fixed $\tl R$-lattice (such as $\tl R^d$), which are useful for constructing Zariski-local or global trivializations of certain fiber bundles. This is important: often, we have a surjection where the motives of the base and of each fiber are known, but to conclude that the motive of the total space is the product of the motives of the fiber and of the base, such trivializations are needed.
        \item This explicit understanding applies to \Cref{lem:key-decomposition}, which is well-suited for establishing affine pavings.
        \item In the nodal case, the existence of an affine paving remains open, since part of the parametrization involves the space of surjections, which is only quasi-affine.
    \end{itemize}
\end{remark}

\section{Combinatorial identities}\label{sec:combo}
In this section, we prove two $q$-series identities that give simplification or special value for $\nu^R_{R^d}(s)$, with $R=R_{2,2m+1}$ or $R=R_{2,2m}$. 

\subsection{Simplification in the cusp case}
Let $k=\Fq$ and $R=R_{2,2m+1}$. The goal of this subsection is to prove a surprising simplification of $\nu^R_{R^d}(s)$:
\begin{theorem}\label{thm:cusp_simplification}
    Let $k=\Fq$ and $R=R_{2,2m+1}$. Then
    \begin{equation}
        \nu^R_{R^d}(s) = \sum_{\mu\subeq (m^d)} g^{(m^d)}_\mu(q)(q^d t^2)^{\abs{\mu}}.
    \end{equation}
\end{theorem}
In particular, $\nu^R_{R^d}(s)=\nu^R_{\tl R^d}(2s)=\nu^R_{\tl R^d}(s)|_{t\mapsto t^2}$. This coincidence remains a total mystery. 

To prove this theorem, we need an identity (\Cref{lem:cusp_squaring}) that can be viewed as a bounded analogue of the Cauchy identity for the skew Hall--Littlewood polynomials, in the sense of \Cref{rmk:skew-cauchy}. We thank S.~Ole Warnaar for sketching its proof and pointing out this connection. The proof uses a summation identity due to Cauchy:
\begin{lemma}[{Cauchy, \cite[(II.5)]{gasperrahman}}]
    As formal series in $a,z,q$, we have
    \begin{equation}
        \sum_{k} (-1)^k q^{\binom{k}{2}}\frac{(a;q)_k}{(q;q)_k (az;q)_k} z^k = \frac{(z;q)_\infty}{(az;q)_\infty}.
    \end{equation}
\label{lem:cauchy}
\end{lemma}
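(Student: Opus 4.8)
\subsection*{Proof plan for Lemma~\ref{lem:cauchy}}

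The plan is to verify the identity by a short formal computation that reduces it to a finite $q$-binomial summation, which is then dispatched by induction. Write $F(z)$ for the left-hand side, viewed as a formal power series in $z$. Since $(az;q)_\infty$ is an invertible power series, it suffices to prove $F(z)\,(az;q)_\infty=(z;q)_\infty$.

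First I would cancel the denominator $(az;q)_k$ in the $k$-th term of $F(z)$ using the factorization $(az;q)_\infty=(az;q)_k\,(aq^kz;q)_\infty$, obtaining
\[
F(z)\,(az;q)_\infty=\sum_{k\geq 0}(-1)^k q^{\binom k2}\frac{(a;q)_k}{(q;q)_k}\,z^k\,(aq^kz;q)_\infty.
\]
Expanding $(aq^kz;q)_\infty=\sum_{j\geq 0}(-1)^j q^{\binom j2}a^j q^{kj}z^j/(q;q)_j$ by Euler's identity, collecting terms of total degree $n=k+j$ (legitimate formally, since each $z^n$ receives only finitely many contributions), and using $\binom k2+\binom j2+kj=\binom n2$, the right-hand side becomes
\[
\sum_{n\geq 0}\frac{(-1)^n q^{\binom n2}}{(q;q)_n}\,z^n\sum_{k=0}^n\qbin{n}{k}_q (a;q)_k\,a^{n-k}.
\]
Comparing with Euler's identity $(z;q)_\infty=\sum_{n\geq 0}(-1)^n q^{\binom n2}z^n/(q;q)_n$, the lemma reduces to the purely finite identity
\begin{equation}\label{eq:cauchy-finite}
\sum_{k=0}^n\qbin{n}{k}_q (a;q)_k\,a^{n-k}=1,\qquad n\geq 0.
\end{equation}

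To prove \eqref{eq:cauchy-finite} I would induct on $n$, the case $n=0$ being trivial. For the step, split via the Pascal recurrence $\qbin{n}{k}_q=q^k\qbin{n-1}{k}_q+\qbin{n-1}{k-1}_q$: the first piece contributes $a\sum_k\qbin{n-1}{k}_q (a;q)_k q^k a^{n-1-k}$, and reindexing the second piece together with $(a;q)_{k+1}=(a;q)_k(1-aq^k)$ contributes $\sum_k\qbin{n-1}{k}_q (a;q)_k a^{n-1-k}-a\sum_k\qbin{n-1}{k}_q (a;q)_k q^k a^{n-1-k}$. The two weighted sums cancel, leaving exactly $\sum_{k=0}^{n-1}\qbin{n-1}{k}_q(a;q)_k a^{n-1-k}$, which is $1$ by the induction hypothesis.

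There is no genuine obstacle here: the computation is routine and the only point meriting care is the formal rearrangement of the double sum, handled by the finiteness remark above. As an alternative, one could instead obtain the statement as the $b\to\infty$ degeneration of the $q$-Gauss summation ${}_2\phi_1(a,b;c;q,c/(ab))=(c/a;q)_\infty(c/b;q)_\infty/\big((c;q)_\infty(c/(ab);q)_\infty\big)$ with $c=az$, but the self-contained route above avoids importing that identity and any convergence bookkeeping.
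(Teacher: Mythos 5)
Your proof is correct. Note, however, that the paper does not actually prove this lemma at all: it simply imports it as Cauchy's ${}_1\phi_1$ summation, citing (II.5) of Gasper--Rahman, where it is usually obtained as a limiting case of the $q$-Gauss sum (the alternative you mention at the end). Your argument is therefore a genuinely different, self-contained route: clearing the denominator via $(az;q)_\infty=(az;q)_k(aq^kz;q)_\infty$, expanding by Euler's identity, and observing $\binom k2+\binom j2+kj=\binom n2$ reduces the statement to the finite identity $\sum_{k=0}^n\qbin{n}{k}_q(a;q)_k\,a^{n-k}=1$, which your Pascal-recurrence induction handles correctly (I checked the telescoping: the two $q^k$-weighted sums cancel and $S_n=S_{n-1}$). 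The formal-series bookkeeping is also sound, since each $z^n$ receives only the finitely many contributions with $k+j=n$ and $(az;q)_\infty$ is invertible. What your approach buys is independence from the $q$-Gauss summation and from any analytic convergence considerations, at the cost of a page of elementary computation; what the paper's citation buys is brevity and placement of the identity in the standard hypergeometric toolkit that the rest of \S 8 draws on (e.g.\ the transformations (III.4), (III.6) used later).
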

Recall that all sums are naturally truncated by the rule $1/(q;q)_k=0$ for $k\in \Z_{<0}$. 

\begin{lemma}\label{lem:cusp_squaring}
    For $d,m\geq 0$ and any partition $\mu$, we have the following identity in $\Z[t,q]$:
    \begin{equation}\label{eq:cusp_squaring}
        \sum_{\lambda} g^{(m^d)}_\lambda(q)\, g^\lambda_\mu(q)\, t^{\abs{\lambda}} (t;q)_{d-\lambda_m'} = g^{(m^d)}_\mu(q) \,t^{\abs{\mu}}.
    \end{equation}
\end{lemma}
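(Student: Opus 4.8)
\textbf{Proof strategy for Lemma~\ref{lem:cusp_squaring}.}
The plan is to convert both sides into $q$-hypergeometric form using the explicit product formula for $g^{(m^d)}_\lambda(q)$ from Remark~\ref{rmk:hall_in_box} and the skew analogue $g^\lambda_\mu(q)$ from Theorem~\ref{thm:hall_skew}, and then recognize the resulting sum as an instance of a known transformation. First I would observe that the summand on the left only depends on $\lambda$ through its column lengths $\lambda'_i$, and since $\mu\subeq\lambda\subeq(m^d)$ forces $\mu'_i\le\lambda'_i\le d$ with $\lambda'_i$ weakly decreasing, the sum over $\lambda$ factors (up to the coupling through the product formulas) into independent sums over each $\lambda'_i\in[\mu'_i, d]$ subject to monotonicity; more precisely, I would try to peel off the innermost variable $\lambda'_m$ (equivalently $\lambda_m$, the number of size-$d$ columns if $\mu$ has fewer), because $(t;q)_{d-\lambda'_m}$ is the only factor that is not a clean monomial-times-Pochhammer expression and it singles out that one column. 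After substituting the product formulas, the factors $g^{(m^d)}_\lambda(q)$ and $g^\lambda_\mu(q)$ combine: $g^{(m^d)}_\lambda(q) g^\lambda_\mu(q)$ telescopes since a submodule of cotype $\lambda$ in the box that further has a submodule of cotype $\mu$ is the same as directly choosing nested data, so one expects $\sum_\lambda g^{(m^d)}_\lambda g^\lambda_\mu = g^{(m^d)}_\mu \cdot(\text{something})$ by the associativity $g^{(m^d)}_\mu = \sum_\lambda g^{(m^d)}_{\lambda}g^{\lambda}_{\mu,\cdots}$ type identity, but here the extra weight $t^{\abs\lambda}(t;q)_{d-\lambda'_m}$ must be carried through.

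The key computational step is then to isolate, for a fixed $\mu$, the one-dimensional sum
\begin{equation}
    \sum_{j=\mu'_m}^{d} c_j(q)\, t^{\,(\text{linear in }j)}\,(t;q)_{d-j},
\end{equation}
where $c_j(q)$ is the $q$-binomial-type coefficient produced by the ratio of the product formulas (it will look like $q$-binomials $\qbin{d-\mu'_{m-1}}{d-j}_{q^{-1}}$ or similar, times a power of $q$), and to show this collapses to a single term $g^{(m^d)}_\mu(q)t^{\abs\mu}$ divided by the contribution of the remaining columns. Using the identities $(t;q)_{d-j} = (t;q)_d/(tq^{d-1};q^{-1})_j$ and $(a;q)_k = (-a)^k q^{\binom k2}(a^{-1};q^{-1})_k$, this sum becomes a ${}_1\phi_1$ or ${}_2\phi_1$ in base $q^{-1}$ with argument proportional to $t$, and I expect Cauchy's identity (Lemma~\ref{lem:cauchy}) — or the $q$-Chu--Vandermonde sum \cite[(II.6)]{gasperrahman} — to evaluate it in closed form, with the $(t;q)_\infty/(at;q)_\infty$ on the right truncating because of the finite range. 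Iterating this column by column from $i=m$ down to $i=1$ (each step removing one $(t;q)_{d-\lambda'_i}$-free Pochhammer and leaving a compatible sum for the next column) should produce exactly $g^{(m^d)}_\mu(q)t^{\abs\mu}$ with all the $(t;q)$ factors on the left having cancelled against the telescoping $q$-hypergeometric evaluations.

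The main obstacle I anticipate is bookkeeping: keeping the powers of $q$ (from the $q^{\binom k2}$ factors, from $a_q(\lambda)$-type normalizations, and from the $q^{d|\mu|}$-type shifts in Remark~\ref{rmk:hall_in_box}) correctly aligned so that the hypergeometric sum is in exactly the normalized form required by Lemma~\ref{lem:cauchy}, and ensuring the monotonicity constraints $\lambda'_1\ge\lambda'_2\ge\cdots$ really do decouple after the change of summation variables rather than introducing genuine multi-dimensional coupling. An alternative, possibly cleaner, route would be a purely bijective/representation-theoretic argument over a DVR: interpret $g^{(m^d)}_\lambda g^\lambda_\mu$ as counting flags $W_\mu\subeq W_\lambda\subeq (k[[X]]/X^m)^d$ of prescribed cotypes, interpret $(t;q)_{d-\lambda'_m}$ via Lemma~\ref{lem:sur_probability} as (a signed/weighted) count of homomorphisms out of $k[[X]]^{d-\lambda'_m}$, and show the alternating sum over $\lambda$ of these weighted flag counts equals the $\mu$-only count by an inclusion–exclusion or an acyclicity argument on a suitable complex; if that works it would explain the identity conceptually, but I would still present the $q$-hypergeometric proof as the rigorous one since it reduces to citable transformation formulas.
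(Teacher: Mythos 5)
Your strategy is essentially the paper's proof: after substituting the product formulas of Theorem~\ref{thm:hall_skew}, the paper peels off the innermost column variable $\lambda'_m$, rewrites the resulting one-dimensional sum as a base-$q^{-1}$ hypergeometric series (it is indeed a ${}_1\phi_1$ summed by Lemma~\ref{lem:cauchy}, with the upper limit harmlessly extended to $\infty$ because the Pochhammer factor vanishes beyond $\lambda'_{m-1}-\mu'_m$), and then recognizes the remainder as a multiple of the same expression for the truncated partition $\bar\mu$ with $m-1$ columns, closing by induction on $m$ exactly as in your column-by-column iteration. Your worry about the monotonicity constraints coupling the columns is resolved precisely by this induction rather than by full decoupling, so the proposal is correct and matches the paper's argument.
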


\begin{remark}\label{rmk:skew-cauchy}
    The $m\to \infty$ limit of \eqref{eq:cusp_squaring} reduces to the following identity
    \begin{equation}
        (t;z)_d \sum_\lambda P_\lambda(1,z,\dots,z^{d-1};z)\,Q'_{\lambda/\nu}(t;z)=P_\mu(1,z,\dots,z^{d-1};z)
    \end{equation}
    where $z=q^{-1}$ and $P_\lambda, Q'_{\lambda/\mu}$ are the modified Hall--Littlewood polynomial and skew Hall--Littlewood polynomial, respectively; see \cite{warnaar2013remarks}. This follows from the skew Cauchy identity
    \begin{equation}
        \sum_{\lambda} P_\lambda(\underline{x};z) \, Q'_{\lambda/\mu}(\underline{y};z)=P_\mu(\underline{x};z) \prod_{i,j} \frac{1}{1-x_iy_j}
    \end{equation}
    by setting $\underline{x}=(1,z,\dots,z^{d-1})$ and $\underline{y}=t$. 
\end{remark}

\begin{proof}[Proof of \Cref{lem:cusp_squaring}]
    For brevity, denote $(q^{-1})_n:=(q^{-1};q^{-1})_n$. Applying \Cref{thm:hall_skew} and dividing both sides by $q^{-\sum_{i\geq 1} \mu_i'^2} (q^{-1})_d$, the original identity is equivalent to
    \begin{equation}\label{eq:identity_proof_1_1}
        \sum_{\lambda} q^{-\sum_{i\geq 1}\lambda_i'(\lambda_i'-\mu_i')}\frac{(t;q)_{d-\lambda_m'}(q^d t)^{\abs{\lambda}}}{\prod_{i\geq 0} (q^{-1})_{\lambda_i'-\lambda_{i+1}'}}\prod_{i\geq 0} {\lambda_i'-\mu_{i+1}'\brack \lambda_i'-\mu_i'}_{q^{-1}} =\frac{(q^d t)^{\abs{\mu}}}{\prod_{i\geq 0}(q^{-1})_{\mu_i'-\mu_{i+1}'}},
    \end{equation}
    where $\lambda_0'=\mu_0':=d$.

    The sketch of proof of \eqref{eq:identity_proof_1_1} is as follows. Denote the left-hand side of \eqref{eq:identity_proof_1_1} by $f_{\mu,m,d}(t,q)$. We first fix $\lambda_1',\dots,\lambda_{m-1}'$ and sum over all $\lambda_m'$. It turns out that the inner sum can be evaluated by \Cref{lem:cauchy}. With this simplification, the outer sum over all $\lambda_1',\dots,\lambda_{m-1}'$ turns out that this sum is a multiple of $f_{\bbar \mu,m-1,d}(t,q)$, where $\bbar\mu$ is the partition with $\bbar \mu'_i=\mu'_i$ for $1\leq i\leq m-1$ and $\bbar \mu'_m=0$. The identity \eqref{eq:identity_proof_1_1} then follows from induction on $m$. 

    Here are details. If $m=0$, the identity reduces to $1=1$. Now assume $m\geq 1$. By extracting the factors that do not depend on $\lambda_m'$, and substituting $k=\lambda_m'-\mu_m'$, the left-hand side of \eqref{eq:identity_proof_1_1} equals    
    \begin{equation}\label{eq:identity_proof_1_2}
    \begin{multlined}
        \frac{(q^d t)^{\mu'_m}}{(q^{-1})_{\mu_m'}}\sum_{\lambda_1', \dots\lambda_{m-1}'} q^{-\sum_{i=1}^{m-1}\lambda_i'(\lambda_i'-\mu_i')} \frac{(q^d t)^{\sum_{i=1}^{m-1} \lambda_i'}}{\prod_{i=0}^{m-2}(q^{-1})_{\lambda_i'-\lambda_{i+1}'}} \prod_{i=1}^{m-1} {\lambda_i'-\mu_{i+1}'\brack \lambda_i'-\mu_i'}_{q^{-1}}
        \\
        \sum_{k} q^{-k(k+\mu_m')} \frac{(t;q)_{d-\mu_m'-k}(q^d t)^k}{(q^{-1})_{\lambda_{m-1}'-\mu_m'-k}(q^{-1})_k}.
    \end{multlined}
    \end{equation}

    We reorganize the inner sum of \eqref{eq:identity_proof_1_2} into a $q^{-1}$-series as follows
    \begin{equation}\label{eq:identity_proof_1_3}
        \frac{(t;q)_{d-\mu_m'}}{(q^{-1})_{\lambda'_{m-1}-\mu_m'}}\sum_{k} (-1)^k q^{-\binom{k}{2}}\frac{(q^{\lambda_{m-1}'-\mu_m'};q^{-1})_k}{(q^{-1})_k (q^{d-1-\mu_m'}t;q^{-1})_k} (q^{d-1-\lambda_{m-1}'}t)^k.
    \end{equation}

    Applying Lemma \ref{lem:cauchy} with $a=q^{\lambda_{m-1}'-\mu_m'}$ and $z=q^{d-1-\mu_m'}t$, we get after simplication
    \begin{equation}
        f_{\mu,m,d}(t,q) = (q^d t)^{\mu_m'} q^{-\mu_m'^2} {\mu_{m-1}'\brack \mu_m'}_{q^{-1}} f_{\bbar\mu,m-1,d}(t,q).
    \end{equation}

    The desired identity then follows from induction on $m$.
\end{proof}

\begin{proof}[Proof of \Cref{thm:cusp_simplification}]
    The formula follows from evaluating the sum over $\lambda$ on the right-hand side of \eqref{eq:cusp-full-initial} using \Cref{lem:cusp_squaring}. 
\end{proof}

\subsection{Special value in the nodal case}
In this subsection, we establish the remarkably simple value of the normalized finitized Coh zeta function for the nodal singularity at $s=0$. Proving this result by directly evaluating the complex multi-sum for $\nu_{R^d}^R(s)$ would be equivalent to proving a new, difficult Rogers-Ramanujan type identity. Instead, we present a short proof that notably avoids any deep $q$-series transformations, relying instead on the general properties of our framework and a combinatorial counting argument.

\begin{theorem}\label{thm:node_special_value}
    Let $k=\Fq$ and $R=R_{2,2m}$. For any $d \ge 0$, we have
    \begin{equation}
        \nuhat_{R,d}(0) = 1.
    \end{equation}
\end{theorem}

Our proof bypasses the complex formula for $\nu_{R^d}^R(s)$ entirely. We instead apply \Cref{prop:s-zero-specialization}, which reduces the problem to computing $\nu^R_{\tl R^d}(0)$. The value of this simpler zeta function is given by the following combinatorial identity.

\begin{lemma}\label{lem:node-t=1}
    Let $R=R_{2,2m}$ and $k=\Fq$. Then
    \begin{equation}\label{eq:node-t=1}
        \nu^R_{\tl R^d}(0)=\sum_{\mu} g^{(m^d)}_\mu(q)\, q^{d\abs{\mu}} \frac{(q^{-1};q^{-1})_d}{(q^{-1};q^{-1})_{d-\mu_1'}}=q^{md^2}.
    \end{equation}
\end{lemma}
\begin{proof}
    Let $(V,\pi,\Fq)$ be a DVR and $B = V/\pi^m V$. The right-hand side, $q^{md^2}$, is the total number of $B$-linear homomorphisms $f:B^d\to B^d$. The left-hand side enumerates the same set of homomorphisms, but stratified by the type, $\mu$, of the image of $f$. The term $g^{(m^d)}_\mu(q)$ counts the number of possible images of type $\mu$, and the remaining factor counts the number of surjective maps from $B^d$ onto a fixed image of type $\mu$, by \Cref{lem:sur_probability}. Summing over all possible types $\mu$ recovers the total count.
\end{proof}

\begin{proof}[Proof of \Cref{thm:node_special_value}]
    Substituting the result from \Cref{lem:node-t=1} and the Serre invariant $\Delta=\abs{\tl R/R}=q^m$ into the formula from \Cref{prop:s-zero-specialization}, we immediately get
    \begin{equation}
        \nuhat_{R,d}(0) = \Delta^{-d^2} \nu_{\tl R^d}^R(0) = (q^m)^{-d^2} \cdot q^{md^2} = 1,
    \end{equation}
    as required.
\end{proof}

\appendix
\section{Affine Grassmanians for reduced curve germs}\label{sec:AGreduced}
Let $R$ be a reduced curve germ (see Definition~\ref{def:reducedcurvegerm}). The goal of this section is to set up a notion of ``$\GL_n$-affine Grassmannian for $R$'': they are ind-projective schemes that parametrize $R$-lattices (Proposition~\ref{prop:ind}), as well as studying three important ``constructible morphisms'' between them that encode certain lattice operations that arise from the six functor formalism (Theorem~\ref{thm:consmaps}). This generalize classical affine Grassmannians studied in  geometric representation theory (cf.~ \cite{Zhu_AffineGrass}), where $R=k[[T]]$. The language can provide a more systematic framework for the constructions in \Cref{sec:motivic}. In the earlier version of this paper \cite{huangjiang2023torsionfree}, these affine Grassmannians played a central role in establishing the motivic theorem~(\Cref{thm:rationality-motivic}) and its relative upgrade. The approach used there was considerably more involved than the explicit Schubert cell parametrization presented in the current version, and has since been superseded. Nevertheless, we record these constructions here for future reference. {We note that the framework here differs from the common approach relating moduli spaces of $R$-modules to affine Grassmannians via affine Springer fibers (cf.~\cite[\S 6.3]{gks2021link}), as we do not rely on the choice of a DVR $Z\subseteq R$ (as in \Cref{lem:z}) or require $R$ to be a planar singularity.} This appendix is not used elsewhere in the paper. 

\subsection{Definition and basic properties} Let $k$ be an arbitrary field. 
\begin{definition}\label{def:reducedcurvegerm}
    A reduced curve germ over $k$ is a local order $(R,\mathfrak{m})$ as in Definition~\ref{def:local-order} that is a $k$-algebra, such that $R/\mathfrak{m}$ is a finite dimension $k$-vectors space. 
\end{definition}
This is more general than the arithmetic and geometric orders considered in Example~\ref{eg:local-orders}. Following \Cref{sec:lattice}, we denote by $\tl{R}$ the normalization of $R$, and by $K$ the total fraction field of $R$. 
\begin{example}
The ring $\mathbb{C}[[x]]$ is a reduced curve germ over $\mathbb{R}$. This may seem pathological at the first glance, but it naturally arises in practice, for example, as the normalization of the reduced curve germ $R=\mathbb{R}[[x,y]]/(x^2+y^2)$ over $\mathbb{R}$. 
Furthermore, a reduced curve germ may not be geometric reduced. For example, let $k=\mathbb{F}_p(t)$ and $k'=k(t^{\frac{1}{p}})$. Then $R=k[[x,y]]/(y^p-tx^p)$ is a reduced curve germ over $k$, with $\tl R = k'[[x]]$. However, $R\otimes_kk'$ is non-reduced. 
\end{example}

Let $S$ be a $k$-algebra. We write $S \cotimes R$ \resp $S\cotimes K$ as the completion of $S\otimes R$ \resp $S\otimes K$  with respect to the $1\otimes \mathfrak{m}$-adic topology. For example, $S\cotimes k[[T]]=S[[T]]$ and $S\cotimes k((T))=S((T))$.

\begin{definition}\label{def:affineGrass}
An \textbf{$S$-family of rank $d$ $R$-lattices} (or a \textbf{rank $d$ $R$-lattice over $S$}) is a finitely generated $S\cotimes R$-submodule $\mathcal{L}\subseteq S\cotimes K^d$, such that $\mathcal{L}\otimes (S\cotimes K)=S\cotimes K^d$ and $S\cotimes K^d/\mathcal{L}$ is flat over $S$. Let $\Gr_{R}(K^d)$ be the functor over $\textbf{Alg}_k$ sending $S$ to $S$-families of rank $d$ $R$-lattices. When $d$ is clear from the context, we often abbreviate $\Gr_{R}(K^d)$ as $\Gr_R$.
\end{definition}

\begin{remark}\label{rmk:sanwich} Let $R$ be a reduced curve germ over $k$ with $b$ branches. By Cohen's structure theorem, the normalization $\ttilde{R}$ is a reduced curve germ over $k$ that must be of the form 
    \begin{equation*}
        \ttilde{R}=k_1[[T_1]]\times \dots \times k_b[[T_b]], \text{ where } [k_i:k]<\infty.
    \end{equation*}
Write $\underline{T}= T_1+\dots+T_b$. Let $\mathcal{L}$ be an $R$-lattices over a $k$-algebra $S$. Since $\mathcal{L}$ is finitely generated, using the conductor as in Remark~\ref{rmk:lattice-bounded}, it is not hard to show that $$\underline{T}^{N}S\cotimes \ttilde{R}\subeq \mathcal{L}\subeq \underline{T}^{-N} S\cotimes \ttilde{R}$$ for $N\gg 0$. In fact, being finitely generated implies that $\mathcal{L}\subeq \underline{T}^{-N} S\cotimes \ttilde{R}$, and $\mathcal{L}\otimes (S\cotimes K)=S\cotimes K^d$ implies that $\underline{T}^{N}S\cotimes \ttilde{R}\subeq \mathcal{L}$.
\end{remark}
\begin{remark}[Base changes]\label{rmk:basechange}
    Let $\mathcal{L}$ be an $S$-family of rank $d$ $R$-lattices, and let $S\rightarrow S'$ be a morphism. The base change $\mathcal{L}_{S'}$ is the $S'\cotimes R$-module $S'\cotimes_S \mathcal{L}$. We claim that the natural map $\mathcal{L}_{S'}\rightarrow S'\cotimes K^d$ is an injection that makes  $\mathcal{L}_{S'}$ an $S'$-family of rank $d$ $R$-lattices. Following Remark~\ref{rmk:sanwich}, we find $N\gg 0$ so that $\underline{T}^{N}S\cotimes \ttilde{R}\subeq \mathcal{L}\subeq \underline{T}^{-N} S\cotimes \ttilde{R}$. For each $r\in \mathbb{N}^+$, we have the following  exact sequence \begin{equation}\label{eq:secquencwdf}
        0\rightarrow \frac{\mathcal{L}}{\underline{T}^{rN}S\otimes \ttilde{R}}\rightarrow \frac{\underline{T}^{-N}S\otimes \ttilde{R}}{\underline{T}^{rN}S\otimes \ttilde{R}}  \rightarrow  \frac{\underline{T}^{-N} S\otimes \ttilde{R}}{\mathcal{L}}\rightarrow 0
    \end{equation}
The flatness of $S\cotimes K^d/\mathcal{L}$ implies the flatness of $(\underline{T}^{-N} S\otimes \ttilde{R})/\mathcal{L}$, so (\ref{eq:secquencwdf}) remains exact when we tensor it to $S'$. Take inverse limit over $r\rightarrow \infty$ of (\ref{eq:secquencwdf})$\otimes_S S'$ and note that inverse limit is left exact, we find that the natural map $\mathcal{L}_{S'} \rightarrow \underline{T}^{N}S'\cotimes \ttilde{R}$ is an injection. So $\mathcal{L}_{S'}\rightarrow S'\cotimes K^d$ is an injection. It is now easy to check that $\mathcal{L}_{S'}$ is an $S'$-family of $R$-lattices. 
 \end{remark}

\begin{remark}\label{rmk:comparetozhu}
    For $R=k[[T]]$ as a reduced curve germ over $k$, our definition of $S$-families of rank $d$ $R$-lattices differs from \cite[Definition~1.1.1]{Zhu_AffineGrass}, which requires projectivity of $\mathcal{L}$ over $S[[T]]$ instead of the flatness of $S((T))^d/\mathcal{L}$ over $S$. However, with some effort, one can show that they are equivalent: ``projectivity implies flatness'' follows from an argument similar to the first half of the proof of \cite[Lemma 1.1.5]{Zhu_AffineGrass}, while the converse follows from the second half.
\end{remark}
Recall the definition of ind-scheme from \S\ref{sub:indscheme}. 
\begin{proposition}\label{prop:ind}
$\Gr_R$ is represented by an ind-projective ind-scheme over $k$. 
\end{proposition}
\proof 
The proof is routine. Let $\mathbb{G}r_{\ttilde R, i}$ be the scheme $\bigsqcup_{r\geq 0}\Gr(r, \underline{T}^{-i} \ttilde R/\underline{T}^{i}\ttilde R)$. Let $\Gr_{R,i}:\textbf{Alg}_k\rightarrow \mathbf{Set}$ be the subfunctor of $\mathbb{G}r_{\ttilde R,i}$, defined as \begin{align}\Gr_{R,i}:S\rightarrow \{S\cotimes R\text{-module quotients } \underline{T}^{-i} S\cotimes \ttilde R/\underline{T}^{i}S\cotimes \ttilde{R}\rightarrow Q \text{ that are locally free over $S$}\}.
\end{align}
By Remark~\ref{rmk:sanwich}, an $S$-families of rank $d$ $R$-lattices is sandwiched between $\underline{T}^{i}S\cotimes \ttilde{R}$ and $\underline{T}^{-i}S\cotimes\ttilde{R}$ for some $i\gg 0$. And $S$-families of rank $d$ $R$-lattices
$\mathcal{L}$ such $\underline{T}^{i}S\cotimes \ttilde{R}\subeq \mathcal{L}\subeq \underline{T}^{-i}S\cotimes\ttilde{R}$  correspond bijectively to $S$-points of $\Gr_{R,i}$. Therefore $$\Gr_R=\bigcup_{i} \Gr_{R,i}.$$ 
Let $(\underline{T}^{2i})\cap R$ be the ideal contraction of $(\underline{T}^{2i})$ to $R$. Then $R/((\underline{T}^{2i})\cap R)$ is a $k$-algebra of finite length over $k$. As a result, we can pick a finite set of generators $\{g_1,g_2,...,g_n\}$ of $R/((\underline{T}^{2i})\cap R)$ over $k$. This helps us to re-interpret $\Gr_{R,i}$ as 
    \begin{equation}\label{eq:finitetypepiece}
    \Gr_{R,i}:S\rightarrow \left\{\begin{aligned}
      &S\text{-submodules } M\subeq 
    \underline{T}^{-i} S\cotimes \ttilde R/\underline{T}^{i}S\cotimes \ttilde{R} \text{ with quotients locally free over }S, \\  &\text{and } g_jM\subeq M \text{ for } j=1,2,...,n.
    \end{aligned}
    \right\}.
\end{equation}
Since each $g_j$ imposes a closed condition, we find that $\Gr_{R,i}$ is a closed subscheme of $\mathbb{G}r_{\ttilde R, i}$. In particular, $\Gr_{R,i}$ is projective. This implies that $\Gr_{R,i}\hookrightarrow \Gr_{R,i+1}$ is a closed embedding. Therefore $\Gr_R$ is ind-projective. $\hfill\square$

\begin{remark}\label{rmk:finitetype}
Let $S$ be a $k$-algebra, and let $\mathcal{L}$ be an $S$-family of rank $d$ $R$-lattices. Since each $\Gr_{R,i}$ is of finite type, we see that $\mathcal{L}$ is already defined over a finite type subalgebra $S'\subseteq S$. 
\end{remark}\begin{remark}
    We will equip $\Gr_R$ with a ``universal lattice'' $\mathcal{U}_R$, which can be thought of as a compatible system of lattices over each $\Gr_{R,i}$.
\end{remark}

\begin{remark}  We state some observations without proof. Let $k=\overline{k}$ (albeit some claims may hold over more general fields).
  \begin{enumerate}
      \item  There is an open ind-subscheme $\Gr_R^\circ\subseteq \Gr_R$ whose $S$-points parameterize families of $R$-lattices that are locally free over $S\cotimes R$. Then $\Gr_R^\circ$ is  naturally compactified by $\Gr_R$. When $\tl R= k[[T_1]]\times ...\times k[[T_b]]$, we have $\Gr_{\tl R}=\Gr_{\tl R}^\circ$.
      \item  It is not clear if $\Gr_R$ admits a loop space interpretation. However, $\Gr_R^\circ$ admits a loop space interpretation (at least on the level of $k$-points): $\Gr_R^\circ= \GL_d(K)/\GL_d(R)$.
      \item Similar to the classical case, we have the so called Beauville--Laszlo uniformization. Let $X/k$ be a reduced curve with a singularity $x\in X(k)$, and let $R$ be the germ of $X$ at $x$. Then $k$-points on $\Gr_R(K^d)$ parametrize rank $d$ torsion free sheaves on $X$ trivialized away from $x$.
  \end{enumerate}
    
\end{remark}
\subsubsection{Relation to Quot schemes} Let $S$ be a $k$-algebra. Let $\mathcal{M}\subeq S\cotimes K^d$ be an $S$-family of $R$-lattices. Write $\Gr_{R,S}$ for the base change of $\Gr_R$ to $\Spec S$. Define a subfunctor $\Gr_R(\mathcal{M})\subseteq\Gr_{R,S}: \mathbf{Alg}_S\rightarrow \textbf{Set}$
sending an $S'\in \mathbf{Alg}_{S}$ to $S'$-families of lattices  $\mathcal{L}\subeq\mathcal{M}_{S'}$. Then $\Gr_R(\mathcal{M})$ is represented by an ind-closed subscheme of $\Gr_{R,S}$: in fact,  $\Gr_R(\mathcal{M})=\lim_{i\rightarrow \infty} \Gr_{R,i}(\mathcal{M})$, where each $\Gr_{R,i}(\mathcal{M})$ is a closed subscheme of $\Gr_{R,i}\times_k S$ (cf. \ref{eq:finitetypepiece}) cut out by the condition ``contained in $\mathcal{M}$''. Write $$\Gr_{R}\xleftarrow{p_1} \Gr_{R,S}\xrightarrow{ p_2} S.$$
Then there is a natural grading \begin{equation}\label{eq:grading}
\Gr_{R,i}(\mathcal{M}) =\bigsqcup_{n\geq 0} \Gr_{R,i}^n(\mathcal{M}),\text{ and (by passing to limit)  }\Gr_R(\mathcal{M})=\bigsqcup_{n\geq 0} \Gr_R^n(\mathcal{M}),
\end{equation}
where over each scheme $\Gr_{R,i}^n(\mathcal{M})$ the sheaf $p_2^*\mathcal{M}/p_1^*\mathcal{U}_R$ is flat of rank $n$. 
The ind-scheme $\Gr_R^n(\mathcal{M})$ is essentially a ``Quot scheme''. The following is just a relative version of Definition~\ref{def:indindind}:
\begin{definition}[Relative Quot scheme]
    Let $\Quot_{n}^{R/\mathfrak{m}^r}(\mathcal{M}/\mathfrak{m}^r\mathcal{M}): \mathbf{Alg}_S\rightarrow \textbf{Set}$ be the functor that sends an $S'$ to $S'\otimes_S (S\cotimes R/\mathfrak{m}^r)$-quotients of $S'\otimes_S (\mathcal{M}/\mathfrak{m}^r\mathcal{M})$ which are locally free ${S'}$-modules of rank $n$. It is classically known that $\Quot^R_{n}(\mathcal{M})$ is represented by a projective $S$-scheme, and each map 
$\Quot_{n}^{R/\mathfrak{m}^r}(\mathcal{M}/\mathfrak{m}^r\mathcal{M})\hookrightarrow \Quot_{n}^{R/\mathfrak{m}^{r+1}}(\mathcal{M}/\mathfrak{m}^{r+1}\mathcal{M})$ is a closed immersion. We then define $\Quot_{n}^{R}(\mathcal{M})$ as the ind-scheme $\bigcup_r\Quot_{n}^{R/\mathfrak{m}^r}(\mathcal{M}/\mathfrak{m}^r\mathcal{M})$. 
\end{definition}

\begin{proposition}\label{prop:BGtoQuot}Notation as above. \begin{enumerate}
    \item There is a natural isomorphism $\Quot_{n}^{R}(\mathcal{M})\xrightarrow{\sim} \Gr_{R}^n(\mathcal{M})$. 
\item If $r\geq n$, then $\Gr_{R,r}^n(\mathcal{M})_{\mathrm{red}}=\Gr_{R}^n(\mathcal{M})_{\mathrm{red}}$.
\end{enumerate}
\end{proposition}
\proof Fix $N\gg 0$ so that $\underline{T}^{N}\subeq \m$. Note that $\{\underline{T}^{Nr}\}$ and $\{\m^{r}\}$ form two cofinial systems of ideals in $R$. For $r\geq 1$, define $\Quot_{n}^{R/\underline{T}^{Nr}}(\mathcal{M}/\underline{T}^{Nr}\mathcal{M}): \mathbf{Alg}_S\rightarrow \textbf{Set}$ be the functor that sends an $S'$ to $S'\otimes_S (S\cotimes R/\underline{T}^{Nr})$-quotients of $S'\otimes_S (\mathcal{M}/\underline{T}^{Nr}\mathcal{M})$ which are locally free ${S'}$-modules of rank $n$. Let $S'$ be an $S$-algebra. An $S'$-point of $\Quot_{n}^{R/\underline{T}^{Nr}}(\mathcal{M}/\underline{T}^{Nr}\mathcal{M})$ corresponds to a map $S'\otimes_S (\mathcal{M}/\underline{T}^{Nr}\mathcal{M}) \rightarrow \mathcal{Q}$. Composing this map with the natural quotient map $\mathcal{M}_{S'}\rightarrow S'\otimes_S (\mathcal{M}/\underline{T}^{Nr}\mathcal{M})=\mathcal{M}_{S'}/\underline{T}^{Nr}\mathcal{M}_{S'}$, and taking the kernel, we get an element in $\Gr_{R,Nr}^n(\mathcal{M})$. This defines a natural map $\nu_r:\Quot_{n}^{R/\underline{T}^{Nr}}(\mathcal{M}/\underline{T}^{Nr}\mathcal{M})\rightarrow \Gr_{R,Nr}^n(\mathcal{M})$. We left the readers to check that it is an isomorphism and is compatible with embeddings $r\rightarrow r+1$. Taking limit to get the natural isomorphism as claimed (note that this does not depend on the choice made).

For the second assertion, it suffices to check that for any $S$ algebra  $F$ which is a field, we have $\Gr_{R,n}^n(\mathcal{M})(F)=\Gr_{R}^n(\mathcal{M})(F)$. The proof is routine; cf. \S\ref{sub:indscheme}.
 $\hfill\square$
\subsection{Constructible morphisms between affine Grassmanians}\label{sub:eandi}A key feature of our theory of affine Grassmanians is its flexibility of moving between different base rings. Let $R\subseteq R'\subseteq\tl R$ be reduced curve germs over $k$, then we have a natural finite map $\pi: \Spec R'\rightarrow \Spec R$. Let $S$ be a $k$-algebra, we write $R'_S= S\cotimes R'$ and $R_S= S\cotimes R$. Then the induced map $ \Spec R'_S\rightarrow \Spec R_S$ is again finite. By abuse of notation, we again call it $\pi$. Following the six functor formalism, we have three meaningful operators
\begin{align*}
    \pi_* = \pi_{!}&: \mathrm{Coh}(\Spec R'_S)\rightarrow \mathrm{Coh}(\Spec R_S),\\
    \pi^*,\pi^{!}&: \mathrm{Coh}(\Spec R_S)\rightarrow \mathrm{Coh}(\Spec R'_S).
\end{align*}
We summarize their effects on lattices. Fix an $S$-family of $R$-lattices $\mathcal{L}\subeq V=S\cotimes K^d$:
\begin{enumerate}
    \item There is a natural map \begin{equation}\label{eq:ext}
 \begin{aligned}
        \pi^*\mathcal{L}=R'_S\otimes_{R_S} \mathcal{L}&\rightarrow V,\\
        r'\otimes l& \mapsto r'l,
    \end{aligned}       
    \end{equation} whose image is denoted by $\vec{\pi}^*\mathcal{L}$ (the symbol ``\,$\vec{\cdot}\,\,$'' means ``going to the image'' in our context). This is not a lattice in general. But when $S=k'$ is a field extension of $k$, $\vec{\pi}^*\mathcal{L}$ is a lattice over $k'$, and is in fact the smallest  $R'$-lattice over $k'$ in $V$ containing $\mathcal{L}$. When $S=k'$, we will call $\vec{\pi}^*\mathcal{L}$ the \textbf{lattice extension} of $\mathcal{L}$ to $R'$. 
    \item There is a natural map 
    \begin{equation}\label{eq:int}
    \begin{aligned}
       \pi^!\mathcal{L}=\Hom_{R_S}(R'_S,\mathcal{L})&\rightarrow V,\\
        f& \mapsto f(1),
    \end{aligned} 
    \end{equation}
whose image is denoted by $\vec{\pi}^!\mathcal{L}$. This is not a lattice in general. But when $S=k'$ is a field extension of $k$, $\vec{\pi}^! \mathcal{L}$ is a lattice over $k'$, and is in fact the largest $R'$-lattice over $k'$ in $V$ contained in $\mathcal{L}$. When $S=k'$, we will call $\vec{\pi}^!\mathcal{L}$ the \textbf{lattice interior} of $\mathcal{L}$ in $R'$. 
\end{enumerate}
For the opposite direction, let $\mathcal{L}'\subeq V$ be an $S$-family of $R'$-lattices, we have: 
\begin{enumerate}\setcounter{enumi}{2}
\item the pushforward $\pi_*\mathcal{L}'$ is $\mathcal{L}'$ but viewed as an $S$-family of $R$-lattices. 
\end{enumerate}
Let $k'$ be a field extension of $k$, and let $L'$ be an $R'$-lattice over $k'$. The \textbf{extension fiber} of $L'$ is the subset 
$$E_R(L'):=\set{L\in \Gr_{R}(k'):L\subeq_R  L'\text{ and }\vec{\pi}^* L=L'}.
$$ 
Similarly, the \textbf{interior fiber} of $L'$ is $$I_R(L'):=\set{L\in \Gr_{R}(k'): L\supeq_R  L'\text{ and }\vec{\pi}^!L=L'}.$$ 
Though defined set-theoretically,  extension and interior fibers have algebraic structure. We will see in Theorem~\ref{thm:consmaps} that this follow formally from the ``constructibility'' of the maps $$\underline{\vec{\pi}^*},\underline{\vec{\pi}^!}: \Gr_{R}\rightarrow \Gr_{R'}$$ 
that pointwise send an $R$-lattice $L$ over $k'$ to $R'$-lattices $\vec{\pi}^* L$ and $\vec{\pi}^! L$, respectively.

\begin{remark}
 When $R$ is Gorenstein and $L'$ is an $\tl R$-lattice over $k$, $E_R(L')$ and $I_R(L')$ encodes the same amount of information. This is a consequence of the Verdier duality. 
\end{remark} 
\subsubsection{Constructible morphisms}
\begin{notation}~\begin{enumerate}
    \item Let $X$ be a $k$-scheme. Recall that a stratification of $X$ is a morphism \begin{equation}\label{eq:strata}\Breve{X}=\bigsqcup_{\alpha\in\mathbf{A}}X_\alpha\rightarrow X
    \end{equation} that is bijective on the underlying sets, such that $|\mathbf{A}|<\infty$, and each $X_\alpha\rightarrow X$ is a locally closed subscheme. A set-theoretical map $f:X\rightarrow Y$ is called a \textbf{constructible morphism}, if there is a stratification (\ref{eq:strata}), such that each $f|_{X_\alpha}:X_\alpha\rightarrow Y$ upgrades to a morphism of schemes. We will denote it by $f:X\xrightarrow{c} Y$ to distinguish it from a general map. We say that $f$ is a \textbf{constructible isomorphism}, if it admits a constructible morphism as an inverse.
    
    In a slightly fancier language, one can localize the category of $k$-schemes by inverting stratifications (\cite[04VC]{stacks-project}), and call the localization category as the category of constructible $k$-schemes. Then a set theoretical map $f$ is a constructible morphism precisely when it underlies a morphism in the localization category.
    \item We can generalize this to the setting of ind-schemes. Let $X$ be an ind-scheme over $k$. Let $\mathbf{A}$ be a countable set. We say that 
$\Breve{X}=\bigsqcup_{\alpha\in \mathbf{A}}X_\alpha\rightarrow X$ is a stratification, if there exist $k$-schemes $\{X_i\}_{i\geq 0}$ and closed immersions $X_i\hookrightarrow X_{i+1}$ such that $X=\varinjlim_{i}X_i$, with the properties that (1) each $X_\alpha$ is a locally closed subscheme of some $X_i$ for $i\gg 0$, (2) for each $i$, $\Breve{X}_i=\bigsqcup_{\alpha\in \mathbf{A}}X_{\alpha}\cap X_i\rightarrow X_i$ is a stratification of the scheme $X_i$ (in particular, there are only finitely many $\alpha\in \mathbf{A}$ such that $|X_\alpha\cap X_i|\neq \varnothing$). A set-theoretical map $f:X\rightarrow Y$ between ind-schemes is called a constructible morphism, denoted by $f:X\xrightarrow{c} Y$, if $X$ admits a stratification $\bigsqcup_{\alpha\in \mathbf{A}}X_{\alpha}\rightarrow X$, such that $f|_{X_\alpha}:X_\alpha\rightarrow Y$ upgrades to a morphism from a scheme into an ind-scheme. The notion of constructible isomorphism can be defined in a similar manner. 
\end{enumerate}
\end{notation}
\begin{lemma}\label{lm:conlattice}
Let $X$ be a finite type $k$-scheme. Let $\mathcal{F}\subseteq \mathcal{O}_X\cotimes K^d$ be a coherent $\mathcal{O}_X\cotimes R$-submodule such that $\mathcal{F}\otimes (\mathcal{O}_X\cotimes K)=\mathcal{O}_X\cotimes K^d$. Then $\mathcal{F}$ is a rank $d$ \textbf{constructible $R$-lattice over $X$}, in the sense that there exists a stratification $\Breve{X}\rightarrow X$ as in (\ref{eq:strata}), such that for each $\alpha\in \mathbf{A}$ the pullback $\mathcal{F}|_{X_\alpha}$ is an $R$-lattice over $X_\alpha$. In particular, it induces a constructible morphism $f:X\xrightarrow{c} \Gr_R$ that comes from a morphism $\Breve{f}: \Breve{X}\rightarrow \Gr_R$, such that $\Breve{f}^*\mathcal{U}_R=\mathcal{F}|_{\Breve{X}}$, where $\mathcal{U}_R$ is the universal lattice over $\Gr_R$.
\end{lemma}
\proof Since $X$ is quasi-compact, we can reduce to the case where $X=\Spec     S$ is affine. Then $\mathcal{F}$ is finitely generated over $S$, so there is an $N\gg 0$ such that $\underline{T}^{N}\mathcal{O}_X\cotimes \ttilde{R}\subeq \mathcal{F}\subeq \underline{T}^{-N}\mathcal{O}_X\cotimes\ttilde{R}$. The quotient $(\underline{T}^{-N}\mathcal{O}_X\cotimes\ttilde{R})/\mathcal{F}$ is a coherent $\mathcal{O}_X$-module. Using flattening ideals (cf. \cite[tag 05P9]{stacks-project}), there is a morphism $$\pi:\Breve{X}=\bigsqcup_{\alpha\in \mathbb{N}} X_\alpha\rightarrow X$$ inducing bijection on the underlying sets, such that each $X_\alpha\rightarrow X$ is a locally closed subscheme, and the pullback of $(\underline{T}^{-N}\mathcal{O}_X\cotimes\ttilde{R})/\mathcal{F}$ to each $X_\alpha$ is flat of rank $\alpha$. Note that $X_\alpha=\varnothing$ when $\alpha\gg 0$, since the rank of $(\underline{T}^{-N}\mathcal{O}_X\cotimes\ttilde{R})/\mathcal{F}$ over closed points are bounded above by the rank of the free $\mathcal{O}_X$-module $(\underline{T}^{-N}\mathcal{O}_X\cotimes\ttilde{R})/(\underline{T}^{N}\mathcal{O}_X\cotimes\ttilde{R})$. Therefore $\pi$ is a stratification, and each $\mathcal{F}|_{X_\alpha}$ is a rank $d$ lattice. So we get a morphism $f_\alpha:X_\alpha\rightarrow \Gr_R$ such that the pullback of $f^*_\alpha\mathcal{U}$ is $\mathcal{F}|_{X_\alpha}$. $\hfill\square$
\subsubsection{The main structural result}
\begin{theorem}\label{thm:consmaps} Let the set up be as in the beginning of \S\ref{sub:eandi}. Consider the following three (set-theoretical) maps: \begin{align*}
    \underline{\pi_*}:   \Gr_{R'} &\rightarrow \Gr_{R},\\
    \underline{\vec{\pi}^*}:  \Gr_{R}&\rightarrow \Gr_{R'},\\
    \underline{\vec{\pi}^!}: \Gr_{R}&\rightarrow \Gr_{R'},
\end{align*}
such that for any field valued points $L\in \abs{\Gr_R}$ and $L'\in \abs{\Gr_{R'}}$, we have $\underline{\vec{\pi}^*}(L)= \vec{\pi}^* L$, $\underline{\vec{\pi}^!}(L)= \vec{\pi}^! L$, and $\underline{\pi_*}(L')=\pi_* L'$. Then $\underline{\pi_*}$ is a genuine morphism, while $\underline{\vec{\pi}^*}$ and $\underline{\vec{\pi}^!}$
are constructible morphisms. In particular, if $L'$ corresponds to a closed $k'$-point of $\Gr_{R'}$, then $E_R(L')$ and $I_R(L')$ are $k'$-points of the constructible subsets $\underline{\vec{\pi}^*}^{-1}(L')$ and $\underline{\vec{\pi}^!}^{-1}(L')$.   
\end{theorem}
\begin{proof}
 Let $\mathcal{U}_R$ (resp. $\mathcal{U}_{R'}$) be the universal lattice over $\Gr_R$ (resp. $\Gr_{R'}$).
The fact that $\underline{\pi_*}$ is a morphism is relatively easy: $\pi_*\mathcal{U}_{R'}$ is an $R$-lattice over $\Gr_R$ (understood as a compatible system of $R$-lattices over $\Gr_{R,i}$ in (\ref{eq:finitetypepiece})), hence induces the desired morphism $\underline{\pi_*}:\Gr_{R'} \rightarrow \Gr_{R}$. 

In the following we study the constructibility of the other two maps.  Write $\Gr_R=\varinjlim X_i$, where $X_i=\Gr_{R,i}$ as in (\ref{eq:finitetypepiece}). Let $X=\Spec S$ be a locally closed affine subscheme of some $X_i, i\gg 0$. It suffices to show that $\underline{\vec{\pi}^*}$ and $\underline{\vec{\pi}^!}$ are constructible when restricted to $X$. Let $\mathcal{U}_{R,S}\subeq S\cotimes K^d$ be its restriction to $X$. Since $\vec{\pi}^*\mathcal{U}_{R,S}$ and $\vec{\pi}^!\mathcal{U}_{R,S}$ are coherent $S\cotimes R'$-submodules of $S\cotimes K^d$ that are sandwiched between $\underline{T}^{N}S\cotimes\ttilde{R}$ and $\underline{T}^{-N}S\cotimes\ttilde{R}$ for some $N\gg 0$, they 
satisfy the condition of Lemma~\ref{lm:conlattice}. We then apply Lemma~\ref{lm:conlattice} to show that they are constructible $R'$-lattices over $X$, and obtain two constructible morphisms $c_1,c_2: X\xrightarrow{c}\Gr_{R'}$.

We then check that the underlying set theoretic maps of the two constructible morphisms $c_1,c_2$ coincide with $\underline{\vec{\pi}^*}$ and $\underline{\vec{\pi}^!}$. To do this, we can replace $X$ by a stratum so to assume that $ \vec{\pi}^* \mathcal{U}_{R,S}$ and $ \vec{\pi}^! \mathcal{U}_{R,S}$ are $S$-families of $R'$-lattices. It suffices to check that if $S\rightarrow F$ is a morphism in to a field $F$, then 
\begin{align}
    \label{eq:const1}\vec{\pi}^* \mathcal{U}_{R,F}=F \cotimes_S( \vec{\pi}^* \mathcal{U}_{R,S}), \\
    \label{eq:const2} \vec{\pi}^! \mathcal{U}_{R,F}=F \cotimes_S( \vec{\pi}^! \mathcal{U}_{R,S})
\end{align}
as submodules of $F\cotimes K^d$ (see \ref{rmk:basechange} for why RHS is contained in $F\cotimes K^d$). Since $\coker (R'_S\otimes_{R_S} \mathcal{U}_{R,S} \rightarrow S\cotimes K^d)= S\cotimes K^d/(\vec{\pi}^* \mathcal{U}_{R,S})$ is flat over $S$, we see that \footnote{In the following computation, since $R_S,R_F, R'_S, R'_F$ are all Noetherian, and since $\mathcal{U}_{R,S}$ and other modules are finitely generated over one of these rings (except the ones involving $K^d$, which can be handled via truncating to $T^{-N}\tl R$ for $N\gg 0$), taking $1\otimes \mathfrak{m}$-adic completion is exact in our context. So we can treat $\cotimes$ in the same way as treating $\otimes$.}\begin{equation}\label{A7}
    \begin{aligned}
   F \cotimes_S( \vec{\pi}^* \mathcal{U}_{R,S})&= F\cotimes_S \im (R'_S\otimes_{R_S} \mathcal{U}_{R,S} \rightarrow S\cotimes K^d)\\&= F\cotimes_S \ker(S\cotimes K^d\rightarrow \coker (R'_S\otimes_{R_S} \mathcal{U}_{R,S} \rightarrow S\cotimes K^d))\\
    &=\ker(F\cotimes K^d\rightarrow F\cotimes_S \coker (R'_S\otimes_{R_S} \mathcal{U}_{R,S} \rightarrow S\cotimes K^d))\\
    &=\ker(F\cotimes K^d\rightarrow  \coker (R'_F\otimes_{R_F} \mathcal{U}_{R,F} \rightarrow F\cotimes K^d))\\
    &=\im (R'_F\otimes_{R_F} \mathcal{U}_{R,F} \rightarrow F\cotimes K^d)\\
    &=\vec{\pi}^* \mathcal{U}_{R,F}. 
\end{aligned} 
\end{equation}
This proves (\ref{eq:const1}). To show (\ref{eq:const2}), first note that we have a commutative square \begin{center}
\begin{tikzcd}
{F\cotimes_S\Hom_{R_S}(R_S',\mathcal{U}_{R,S})} \arrow[d, "\alpha"'] \arrow[r] & F\cotimes_S(S\cotimes K^d) \arrow[d, "\simeq"] \\
{\Hom_{R_F}(R_F',\mathcal{U}_{R,F})} \arrow[r]                                 & F\cotimes_SK^d                          
\end{tikzcd}
\end{center}
The image of the bottom row is $\vec{\pi}^! \mathcal{U}_{R,F}$. Using the fact that $\coker (\Hom_{R_S}(R_S',\mathcal{U}_{R,S}) \rightarrow S\cotimes K^d)= S\cotimes K^d/(\vec{\pi}^! \mathcal{U}_{R,S})$ is flat over $S$, we can argue 
as in (\ref{A7}) that the image of the top row is $ F \cotimes_S( \vec{\pi}^! \mathcal{U}_{R,S})$. So to  
prove (\ref{eq:const2}), it suffices to show that $\alpha$ is surjective. Since $R_S$ is Noetherian,  $R'_S$ is finitely presented. Therefore we have a right exact sequence \begin{equation}\label{eq:resolution1}
    R_S^{\oplus m}\rightarrow R_S^{\oplus n} \rightarrow R'_S\rightarrow 0.
\end{equation}
This yields a commutative diagram 
\begin{center}
\begin{tikzcd}
            & {F\cotimes_S\Hom_{R_S}(R_S',\mathcal{U}_{R,S})} \arrow[r] \arrow[d, "\alpha"'] & {F\cotimes_S\mathcal{U}_{R,S}^{\oplus n}} \arrow[r, "\beta_1"] \arrow[d, "\simeq"] & {F\cotimes_S\mathcal{U}_{R,S}^{\oplus m}} \arrow[d, "\simeq"] \\
0 \arrow[r] & {\Hom_{R_F}(R_F',\mathcal{U}_{R,F})} \arrow[r]                                 & {\mathcal{U}_{R,F}^{\oplus n}} \arrow[r, "\beta_2"]                                & {\mathcal{U}_{R,F}^{\oplus m}}                               
\end{tikzcd}
\end{center}
where the first row follows by applying $F\cotimes_S\Hom_{R_S}(-,\mathcal{U}_{R,S})$ to (\ref{eq:resolution1}), and the second row by applying $\Hom(F\cotimes_S -,\mathcal{U}_{R,F})$ to (\ref{eq:resolution1}). Now we can natually identify $\beta_1$ with $\beta_2$. In particular, $\im \beta_1$ is identified with $\im \beta_2$. By the snake lemma, we see that $\coker \alpha \simeq \ker (\im \beta_1\rightarrow \im \beta_2)= 0$. This shows $\alpha$ is surjective, and (\ref{eq:const2}) follows. 
\end{proof}
Observe that $$\underline{\vec{\pi}^*}\circ \underline{\pi_*}=\underline{\vec{\pi}^!}\circ \underline{\pi_*}=\mathrm{id},$$ which can be easily checked over field valued points. It follows that $\underline{\pi_*}$ is injective, while $\underline{\vec{\pi}^*}$ and $\underline{\vec{\pi}^!}$ are surjective. This makes $\underline{\vec{\pi}^*}$ and $\underline{\vec{\pi}^!}$ resemble a certain ``fiberation'' that admits $\underline{\pi_*}$ as a section. Our last theorem is a result in this direction:

Let $k=\overline{k}$ and $R'=\tl{R}$, and let $\tl L$ be an $\tl R$-lattice over $k$. Identify $E_R(\tl L)$ and $I_R(\tl L)$ with their underlying constructible subsets.
\begin{theorem}
Notation as above. We have constructible isomorphisms \begin{align*}
        &\Gr_{R}\stackrel{c}{\simeq} \Gr_{\tl{R}}\times E_R(\tl L),\\
       & \Gr_{R}\stackrel{c}{\simeq} \Gr_{\tl{R}}\times I_R(\tl L).
    \end{align*}
\end{theorem}
\begin{proof}
Note that $E_R(\tl L)$ and $I_R(\tl L)$ do not depend on $\tl L$. Without loss of generality, we can take $\tl L = \tl R^{d}$. Stratify $\Gr_{\tl{R}}$ into Schubert cells as in \S\ref{subsec:schubert}. Then the universal lattice $\mathcal{U}_{\tl R}$ over each $X_\mu^{\circ}$ is \textit{constant}, i.e., $\mathcal{U}_{\tl R}$ is isomorphic to the pullback of $\tl{R}^{d}$ along the structure morphism $X_\mu^{\circ}\rightarrow \mathrm{Spec\,}k$, and this isomorphism is given tautologically and explicitly by the affine parameterization (\ref{eq:affineparameterization}). It follows immediately that the constructible morphisms $ \underline{\vec{\pi}^*}, \underline{\vec{\pi}^!}: \Gr_{R}\rightarrow \Gr_{\tl{R}}$ as per Theorem~\ref{thm:consmaps} induce isomorphisms $ \underline{\vec{\pi}^*}^{-1}(X_\mu^{\circ})\simeq X_\mu^{\circ}\times E_R(\tl L)$ and $ \underline{\vec{\pi}^!}^{-1}(X_\mu^{\circ})\simeq X_\mu^{\circ}\times I_R(\tl L)$. Putting the strata together, we win. 
\end{proof}

\bibliography{submission.bbl}
   
\end{document}